\newcommand{\yes}{\cellcolor{green!25}}
\newcommand{\no}{\cellcolor{red!25}}
\newcommand{\dunno}{\cellcolor{blue!25}?}
\newtheorem{theorem}{Theorem}[section]
\newtheorem*{theorem*}{Theorem}
\newtheorem{lemma}[theorem]{Lemma}
\newtheorem{prop}[theorem]{Proposition} 	
\newtheorem{corollary}[theorem]{Corollary}
\newtheorem{conjecture}[theorem]{Conjecture}
\newtheorem{theoremB}{Theorem}
\theoremstyle{definition}
\theoremstyle{remark}
\newtheorem{remark}[theorem]{Remark}
\numberwithin{equation}{section}
\newcommand{\pres}[3]{\textnormal{#1} \langle #2 \mid #3 \rangle}
\newcommand{\xra}[1]{\xrightarrow{\ast}_{#1}}
\newcommand{\trev}{\text{rev}}
\DeclareMathOperator{\UT}{UT}
\DeclareMathOperator{\MT}{M}
\newcommand{\UTZ}[1]{\UT_#1(\overline{\mathbb{Z}})}
\newcommand{\MTZ}[1]{\MT_#1(\overline{\mathbb{Z}})}
\newcommand{\TZ}{\overline{\mathbb{Z}}}
\newcommand{\TR}{\overline{\mathbb{R}}}
\begin{document}

\title{Tropical One-relation Monoids}

\author{Carl-Fredrik Nyberg-Brodda}
\address{Alan Turing Building, Department of Mathematics, University of Manchester, UK}
\email{carl-fredrik.nybergbrodda@manchester.ac.uk}
\thanks{The work in this article was carried out while the author was Dame Kathleen Ollerenshaw Trust Research Associate at the University of Manchester, and the author wishes to thank the Trust for funding this research. The author is currently working as Attach\'e Temporaire d'Enseignement et de Recherche (ATER) at LIGM, Universit\'e Gustave Eiffel (Paris).}

\subjclass[2020]{}

\date{\today}


\keywords{}

\begin{abstract}
In 2010, Izhakian \& Margolis proved that the bicyclic monoid admits a representation as a semigroup of upper triangular tropical matrices. We extend this result by classifying all one-relation monoids which admit such representations. We also classify, with two exceptions, all one-relation monoids which admit representations as semigroups of tropical matrices. 
\end{abstract}

\maketitle



\noindent This present article straddles an interesting boundary between combinatorial semigroup theory and tropical mathematics which has been exposed in recent years. To facilitate reading for readers who may be firmly located in one, but not the other, area of research, we will begin by giving an overview of the main ideas used from both areas. We begin by describing identities in combinatorial semigroup theory; after this, we describe tropical mathematics and representations. 

Studying identities satisfied by various classes of semigroups has a long and rich history. Let $X$ be a fixed countably infinite set of \textit{variables}. Let $X^+$ denote the free semigroup on $X$. Equality in free semigroups is always here denoted $\equiv$. A (semigroup) \textit{identity} is an expression of the form $w \bumpeq w'$ for some $w, w' \in X^+$. For any semigroup $S$, we say that $S$ \textit{satisfies} $w \bumpeq w'$ or that the identity $w \bumpeq w'$ \textit{holds in} $S$, sometimes denoted $S \models (w \bumpeq w')$, if for every homomorphism $\varphi \colon X^+ \to S$ we have $\varphi(w) = \varphi(w')$. By the universal property of free semigroups, every such homomorphism is equivalent to a \textit{substitution} of elements of $S$ for the variables occurring in $w$ and $w'$. Thus, the identity $w \bumpeq w'$ holds in $S$ if and only if for every substitution of elements of $S$ for the variables in $w$ and $w'$, the resulting equality holds in $S$. As examples of identities, a semigroup $S$ is commutative if and only if the identity $xy \bumpeq yx$ holds in $S$, and a semigroup is periodic if and only if $x^n \bumpeq x^m$ holds for some $m > n > 0$. Every free semigroup on two or more generators does not satisfy any identity $w \bumpeq w'$ with $w \not\equiv w'$. Identities of the form $w \bumpeq w$, i.e. the identities which hold in every semigroup, are called \textit{trivial}. Monoid and group identities are defined entirely analogously. Clearly, if $S$ satisfies an identity $w \bumpeq w'$ then for any subsemigroup $S' \leq S$, we have that $S'$ satisfies $w \bumpeq w'$. For more details on identities, we refer the reader to the survey \cite{Shevrin1985}.

An early results on identities in semigroups is that by Adian \cite{Adian1962} (detailed proofs were given in \cite[Chapter~IV]{Adian1966}) that the bicyclic monoid satisfies
\begin{equation}\label{Eq:Adian-identity}
xy^2x(xy)xy^2x \bumpeq xy^2x(yx)xy^2x, \tag{AI}
\end{equation}
now frequently referred to as ``Adian's identity''. Adian also claims that \eqref{Eq:Adian-identity} is the shortest identity satisfied by the bicyclic monoid, but, contrary to popular belief, does not actually give a proof of this (correct) fact. The identities satisfied by the bicyclic monoid $\mathcal{B}$ are interesting, and we will expand on them in the tropical context below; before this, we mention briefly that Shneerson \cite{Shneerson1989} proved that there is no \textit{finite basis} for the set of identities satisfied by $\mathcal{B}$, and it follows from Scheiblich \cite{Scheiblich1971} that the monogenic free inverse monoid $\operatorname{FIM}(1)$ satisfies exactly the same identities as $\mathcal{B}$. Adian \cite{Adian1966} also proved that a (not necessarily finitely presented) \textit{special} monoid (i.e. a monoid in which all defining relations are of the form $w=1$) satisfies a non-trivial identity only if it is either monogenic, the bicyclic monoid, or a group. This is one of the early remarkable results of combinatorial semigroup theory. For more on the history of results about special monoids, as well as the author's language-theoretic investigations on the same, see \cite{NybergBrodda2022a}. 

Beyond the bicyclic monoid, much is known about the identities satisfied by one-relation monoids. While it remains a famous open problem whether the word problem is decidable in all one-relation monoids (for an overview of this problem, see the recent survey \cite{NybergBrodda2021a}), it is still possible to make non-trivial statements about the class of all one-relation monoids. In particular, Shneerson \cite{Shneerson1972a, Shneerson1972b} gave a complete classification of the one-relation monoids which satisfy some non-trivial identity. Furthermore, he proved that there exists an algorithm which takes as input two words $w, w' \in X^+$ and a one-relation monoid $M = \pres{Mon}{A}{u=v}$, and which outputs \textsc{yes} if $M$ satisfies $w \bumpeq w'$, and \textsc{no} if it does not.\footnote{This result, for the restricted case of the bicyclic monoid, was later rediscovered by Pastijn \cite[Theorem~4.1]{Pastijn2006}, who seems to have been unaware of Shneerson's much more general result. Pastijn's algorithm uses the same idea as Shneerson's algorithm, which can in rough terms be described as using Minkowski's theorem to find lattice points inside convex polytopes in $\mathbb{R}^n$.
} 

Note that identities in semigroups are fairly closely linked with the existence of free subsemigroups. In the sequel, ``free subsemigroup'' will always refer to one which is free of rank $2$ or greater. Now, if a semigroup $S$ contains a free subsemigroup, then $S$ cannot satisfy any non-trivial identity. It is natural to ask about the converse: if $S$ contains no free subsemigroup, does it follow that $S$ satisfies a non-trivial identity? The answer, it turns out, is no -- Shneerson \cite{Shneerson1993} constructed a semigroup $\Pi$ with polynomial growth (and hence without free subsemigroups) but which satisfies no non-trivial identity. However, in Shneerson's 1972 papers indicated above, Shneerson managed to prove that for the class of one-relation monoids, the equivalence does hold:

\begin{theorem*}[Shneerson, 1972]
A one-relation monoid satisifes a non-trivial identity if and only if it does not contain a free subsemigroup of rank $2$ (if and only if it it has polynomial growth). 
\end{theorem*}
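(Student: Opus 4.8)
The plan is to dispatch the two implications that hold for \emph{every} semigroup first, and then to concentrate the work in a structural analysis of the one-relation monoid according to the shape of its defining word $u=v$; write $M = \pres{Mon}{A}{u=v}$. Both ``$M$ satisfies a nontrivial identity'' and ``$M$ has polynomial growth'' imply ``$M$ has no free subsemigroup of rank $2$,'' with no use of the one-relation hypothesis. Indeed, a free semigroup of rank $\geq 2$ satisfies no nontrivial identity and identities are inherited by subsemigroups, so a monoid satisfying a nontrivial identity can contain no such free subsemigroup; and, as already noted, a free subsemigroup of rank $\geq 2$ has exponential growth, so polynomial growth also precludes one. It therefore remains to prove the converse for one-relation monoids: that the absence of a free subsemigroup of rank $2$ forces both polynomial growth and a nontrivial identity.

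This converse cannot be deduced from polynomial growth alone, since Shneerson's semigroup $\Pi$ has polynomial growth yet no nontrivial identity; the argument must genuinely use the single relation. The plan is a case analysis on the relator. After discarding degenerate configurations -- if $u \equiv v$ then $M$ is free, and if some generator does not occur in $u$ or $v$ it splits off as a free factor and, outside trivial cases, creates a free subsemigroup, so we may assume every generator occurs in $u=v$ -- I would classify according to the first and last letters of $u$ and $v$ and, crucially, according to the self-overlaps of $u$ and $v$. The target is a dichotomy: either the relator has one of a short list of \emph{tame} shapes, or one can exhibit two elements $p, q \in M$ generating a free subsemigroup, by choosing words whose products admit too few rewrites under the single relation ever to collide.

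The tame shapes are precisely those yielding the monoids already known to have polynomial growth: monogenic monoids and finite groups; free-commutative-type monoids coming from commuting relators such as $ab = ba$; bicyclic-type monoids coming from relators $u=1$, which Adian's theorem confines to the monogenic, bicyclic, or group cases; and a few borderline relators such as $a^2 = b^2$, whose enveloping group is the (virtually abelian) Klein bottle group. For each such family I would read off an explicit normal form, which shows polynomial growth and hence gives ``no free subsemigroup $\Rightarrow$ polynomial growth,'' and exhibit a nontrivial identity: $xy \bumpeq yx$ in the commutative cases, a power identity $x^m \bumpeq x^n$ in the finite or periodic cases, Adian's identity \eqref{Eq:Adian-identity} for the bicyclic monoid and its analogues, and the relevant virtually-abelian identity for the borderline relators. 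This yields ``no free subsemigroup $\Rightarrow$ satisfies a nontrivial identity,'' and together with the easy implications all three conditions become equivalent.

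The crux is the free-subsemigroup construction in the generic case: one must argue that whenever $u=v$ is not of tame shape, two suitably chosen words behave independently in $M$, and the delicate point is to rule out \emph{hidden} collapses produced by self-overlaps of $u$ and $v$ -- the very overlaps that obstruct a general solution to the word problem for one-relation monoids. Equally delicate is drawing the tame/free boundary exactly, so that genuinely polynomial-growth relators such as $a^2=b^2$ land on the tame side rather than being mistaken for free; these borderline relators, where $M$ embeds into a virtually abelian group, are where the classification requires the most care.
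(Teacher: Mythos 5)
There is a genuine gap, and it is located exactly where you say the crux is. The two easy implications (a nontrivial identity, or polynomial growth, each rules out a free subsemigroup of rank $2$) are fine and need no one-relation hypothesis. But the hard direction is the entire content of Shneerson's two 1972 papers, and your sketch does not carry it out; worse, the taxonomy of ``tame shapes'' you propose to aim for is demonstrably the wrong target. Compare with the actual classification (Theorem~\ref{Thm:Shneerson's}): besides the monogenic monoids, $\mathcal{K}$, the bicyclic monoid, and $ab=ba$, the list contains the families $a^2=b^2$, $aba=b$, $aba^2=ba$, $aba=ba$, $ab=b^k$, $a^2ba=ab$, $aba=ab$, $ba=b^k$. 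Of these, only $a^2=b^2$ and $aba=b$ fit your ``borderline cancellative, embeds in a virtually abelian group'' slot. The monoids $M_4$--$M_9$ are only one-sided cancellative (left but not right, or vice versa), so they embed in no group at all, are not commutative, not periodic, and not special; and the identities they satisfy --- $xyxyx \bumpeq yx^2yx$, $xyxyx \bumpeq xyx^2y$, $(xyx)^2(yx)^2 \bumpeq (yx)^2(xyx)^2$, etc. --- belong to none of the four types ($xy \bumpeq yx$, power identities, Adian's identity, virtually-abelian identities) you propose to verify. Your dichotomy would therefore route a relator such as $aba=ba$ or $ab=b^3$ into the ``generic'' bin and attempt to exhibit a free subsemigroup there, which does not exist: these monoids have explicit normal forms (e.g.\ $(ba)^\alpha a^\beta b^\gamma$ for $aba=ba$, as in Lemma~\ref{Lem:Shneerson-M5_aba=ba}) witnessing polynomial growth.

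Even setting aside the misdrawn boundary, the free-subsemigroup construction for the genuinely generic relators --- the step you correctly identify as delicate because of self-overlaps of $u$ and $v$ --- is asserted rather than performed, and there is no argument offered for why the case analysis on first/last letters and overlaps terminates in finitely many shapes rather than an unbounded family of exceptions. The theorem as stated in the paper is quoted from Shneerson and rests on his full classification; a proof along your lines would have to first recover that classification exactly, including the six one-sided-cancellative families and their nonstandard identities, before the equivalence of the three conditions can be closed.
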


We refer the reader to Shneerson \cite{Shneerson2015} for further details, and note in passing that this also contains a study of identities in one-relation \textit{inverse} monoids. Such monoids appear significantly more complicated than their one-relation ``ordinary'' monoid counterparts (cf. e.g. \cite{Gray2020}). This completes our overview of identities.

We now turn to tropical mathematics. Much like semigroups, this is an area whose axioms are very easy to state. Tropical mathematics is the study of \textit{tropical semirings}. The \textit{integral tropical semiring} $(\TZ, \max, +)$ is the semiring with underlying set $\TZ = \mathbb{Z} \cup \{ -\infty\}$ and operations $\max$ and $+$, defined as usual for elements of $\mathbb{Z}$, and with $\max(-\infty, x) = x$ and $-\infty + x = -\infty$ for all $x \in \TZ$.  For clarity, its operations will sometimes written as $\oplus$ and $\otimes$, rather than $\max$ and $+$. The semiring $\TR$ (with $\TR = \mathbb{R} \cup \{ -\infty\}$) is often called simply the \textit{tropical semiring} $\mathbb{T}$. We will \textit{not}, however, consider $\TR$ in the present article. Note that in the literature, tropical semirings are sometimes called \textit{max-plus algebras}. The \textit{tropical matrix semiring} $\MTZ{n}$ is the semiring of all $(n\times n)$-matrices with entries in $\TZ$, with operations induced as usual from $\TZ$, i.e. $(A \oplus B)_{ij} = A_{ij} \oplus B_{ij}$, and $(A \otimes B)_{ij} = \sum_k A_{ik} \otimes B_{kj}$. The semiring of \textit{upper triangular matrices} $\UTZ{n}$ consists of all $(n \times n)$ upper triangular matrices in $\MTZ{n}$, i.e. those with $M_{ij} = -\infty$ for $i>j$ (in tropical mathematics, $-\infty$ plays the role of $0$ in ``ordinary'' mathematics), and with the same operations as for $\MTZ{n}$. This completes our overview of tropical mathematics. The interested reader may consult \cite{Speyer2009} for a great deal more detail.

The starting point for the present article is a unification of the two areas outlined above. This was initialised by the observation of Margolis \& Izhakian \cite{Izhakian2010} that the bicyclic monoid can be embedded in $\UTZ{2}$. The authors additionally prove that $\UTZ{2}$ satisfies Adian's Identity \eqref{Eq:Adian-identity}, and as a corollary Adian's original result for the bicyclic monoid can be recovered.\footnote{However, the occasional claim to the effect that this provides a ``much simpler'' proof of the fact that the bicyclic monoid satisifes Adian's identity should be regarded with some scepticism: Adian's proof \cite[pp.120--121]{Adian1966} is elementary and just over a page long, quite unlike the proof in \cite{Izhakian2010}. One should also note that the two proofs, when restricted to the case of the bicyclic monoid, are not fundamentally different.} This was later extended in several ways. Izhakian \cite{Izhakian2014} (with erratum  in \cite{Izhakian2016}) and Okni\'{n}ski \cite{Okninski2015} gave a short proof that $\UTZ{n}$ satisfies a non-trivial identity for all $n \geq 2$. An elementary proof of the same fact was later given by Cain et al. \cite{Cain2017}. Later, Daviaud, Johnson~\&~Kambites \cite{Daviaud2018} proved that $\UTZ{2}$ and the bicyclic monoid satisfy the \textit{same} set of identities, thus providing a strong link between classical results on identities in certain classes of semigroups and modern results on identities in tropical matrix semigroups. Especially in the past 10 years, this link has developed greatly, see e.g. \cite{Hollings2012, Chen2016, Izhakian2019, Johnson2019b, Johnson2019, Han2021, Aird2022, Kambites2022}. For example, one of the remarkable results in this area is that by Johnson \& Kambites \cite{Johnson2021} that every plactic monoid satisfies some non-trivial identity.

Let $M$ be a monoid. We say that $M$ is \textit{$\UT$-tropical} if there exists some $n \geq 1$ such that $M$ embeds (as a semigroup) in $\UTZ{n}$. Analogously, we say that $M$ is $\MT$\textit{-tropical} if it embeds in some $\MTZ{n}$. This terminology is currently not in standard use in the existing literature, but it will be very convenient for us. Of course, the property of being a $\UT$-tropical (resp. $\MT$-tropical) monoid is a hereditary property, and hence it follows from Markov's theorem \cite{Markov1951b, Markov1951a} that the satisfiability problem for $\UT$-tropicality (resp. $\MT$-tropicality) is undecidable.\footnote{The author has recently translated all original Russian papers surrounding the Adian--Rabin theorem into English, including Markov's original papers on the semigroup analogue of the theorem (but preceding Adian's papers by half a decade). This can be found online at \cite{NybergBrodda2022b}. } That is:

\begin{theorem*}
There does not exist an algorithm which decides whether a given finitely presented monoid is $\UT$-tropical (resp. $\MT$-tropical) or not.
\end{theorem*}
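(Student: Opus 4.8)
The plan is to recognise $\UT$-tropicality (resp.\ $\MT$-tropicality) as a \emph{Markov property} and then invoke Markov's theorem verbatim. Recall that a property $\mathcal{P}$ of finitely presented monoids counts as a Markov property provided it is invariant under isomorphism, at least one finitely presented monoid possesses $\mathcal{P}$, and at least one finitely presented monoid $M_-$ embeds into \emph{no} finitely presented monoid possessing $\mathcal{P}$; Markov's theorem then furnishes the undecidability. Invariance under isomorphism is immediate, since whether a monoid embeds (as a semigroup) into some $\UTZ{n}$, resp.\ some $\MTZ{n}$, depends only on its isomorphism type. So the work reduces to exhibiting a positive and a negative witness.

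For the positive witness I would take the bicyclic monoid $\mathcal{B} = \pres{Mon}{b,c}{bc=1}$, which is finitely presented and, by Izhakian \& Margolis, embeds into $\UTZ{2}$. Since the upper triangular matrices form a subsemiring of $\MTZ{2}$, the same embedding realises $\mathcal{B}$ inside $\MTZ{2}$, so $\mathcal{B}$ is simultaneously $\UT$- and $\MT$-tropical; (the trivial monoid, sitting inside $\UTZ{1}$, would do just as well). This settles the second Markov condition for both properties at once.

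The negative witness carries the real content. The crucial structural input is that both $\UT$- and $\MT$-tropicality are \emph{hereditary}, so that any subsemigroup of a monoid with the property again has the property. Hence if a finitely presented $M_-$ embedded into \emph{any} finitely presented monoid with the property, then $M_-$ would itself have the property; to verify the third Markov condition it therefore suffices to produce one finitely presented monoid that \emph{lacks} it. I would take the free monoid of rank two, $M_- = \pres{Mon}{a,b}{\ }$. It contains a free subsemigroup of rank $2$ and so satisfies no non-trivial identity. On the other hand every $\UT$-tropical monoid embeds in some $\UTZ{n}$, which satisfies a non-trivial identity by the cited results of Izhakian and of Okni\'nski, and hence every $\UT$-tropical monoid satisfies a non-trivial identity; the same reasoning disqualifies $M_-$ from being $\MT$-tropical once one feeds in the known fact that $\MTZ{n}$ likewise satisfies a non-trivial identity for every $n$. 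Thus $M_-$ is neither $\UT$- nor $\MT$-tropical, and by heredity embeds into no finitely presented monoid that is.

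With all three conditions in hand, Markov's theorem delivers the undecidability of both satisfiability problems simultaneously. The hard part is not the Markov bookkeeping, which is routine, but securing the negative witness: this hinges on knowing that the ambient tropical matrix semigroups satisfy non-trivial identities. For the upper triangular case this is exactly the quoted theorem on $\UTZ{n}$; for the full matrix case one must instead invoke the corresponding, and distinctly less elementary, statement that $\MTZ{n}$ satisfies a non-trivial identity, and locating and citing that result is the genuine crux of the $\MT$-tropical half of the argument.
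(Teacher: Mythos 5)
Your proposal is correct and is essentially the paper's own argument unpacked: the paper disposes of this theorem in one line by observing that $\UT$-tropicality and $\MT$-tropicality are hereditary properties and invoking Markov's theorem, and your verification of the three Markov conditions (isomorphism-invariance, the bicyclic monoid or the trivial monoid as positive witness, heredity reducing the third condition to exhibiting a single non-tropical finitely presented monoid) is exactly the intended bookkeeping. The one caveat is the crux you yourself flag: ruling out the rank-$2$ free monoid as $\MT$-tropical requires the genuinely nontrivial theorem that $\MTZ{n}$ satisfies a non-trivial identity for every $n$ (Izhakian--Merlet), which does exist and is implicitly relied upon elsewhere in the paper; if one wants to avoid it, a finitely presented monoid with undecidable word problem serves as a more elementary negative witness, since any finitely generated subsemigroup of $\MTZ{n}$ has decidable word problem.
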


The present article will show that for \textit{one-relation} monoids, the situation is not quite as dreary. In particular, we will extend Margolis \& Izhakian's result for the bicyclic monoid, by providing tropical representations for other classes of one-relation monoids. To do this, we will rely heavily on Shneerson's classification of the one-relation monoids satisfying some non-trivial identity (see Theorem~\ref{Thm:Shneerson's}). In \S\ref{Sec:Upper-tropical}, we will give a complete classification of the $\UT$-tropical one-relation monoids, as follows:

\begin{theoremB}
Let $M$ be a one-relation monoid. Then $M$ is $\operatorname{UT}$-tropical if and only if $M$ is free monogenic, aperiodic monogenic, the bicyclic monoid, or one of $M_1, M_5, M_6^{(k)}, M_8$, or $M_9^{(k)}$ for some $k \geq 1$.
\end{theoremB}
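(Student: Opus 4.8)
The plan is to use Shneerson's classification (Theorem~\ref{Thm:Shneerson's}) to collapse an a priori infinite problem into a finite case check, and then to dispatch the surviving candidates by explicit construction in one direction and by a structural obstruction in the other.

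\emph{Necessity.} Suppose $M$ is $\UT$-tropical, so that $M$ embeds in $\UTZ{n}$ for some $n$. If $n=1$ then $\UTZ{1}=\TZ$ is commutative and $M$ satisfies $xy \bumpeq yx$; if $n \geq 2$ then $\UTZ{n}$ satisfies a non-trivial identity by the results of Izhakian and Okni\'nski \cite{Izhakian2014, Okninski2015}. Since satisfaction of an identity passes to subsemigroups, in either case $M$ satisfies a non-trivial identity, and hence by Theorem~\ref{Thm:Shneerson's} is isomorphic to the free monogenic monoid, a monogenic monoid, the bicyclic monoid, or one of $M_1, \dots, M_9$. It then remains only to show that each remaining candidate---the periodic (non-aperiodic) monogenic monoids, together with $M_2, M_3, M_4$ and $M_7$---fails to embed in any $\UTZ{n}$.

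\emph{Sufficiency.} Here I would exhibit explicit embeddings. The bicyclic monoid embeds in $\UTZ{2}$ by Izhakian and Margolis \cite{Izhakian2010}, and the free monogenic monoid embeds in $\UTZ{1}=\TZ$ via $k \mapsto k$. The aperiodic monogenic monoid $\pres{Mon}{a}{a^m = a^{m+1}}$ embeds in $\UTZ{m+1}$ via a matrix with zero diagonal whose off-diagonal entries force the heaviest path from the first to the last coordinate to saturate at length exactly $m$, so that the powers stabilise precisely at $a^m$. For $M_1, M_5, M_6^{(k)}, M_8$ and $M_9^{(k)}$ I would build block/superposition matrices that overlay a bicyclic block on a commuting diagonal part, encoding the parameter $k$ in the matrix entries for the two infinite families; correctness of each embedding can be verified either by direct multiplication or by matching Green's relations.

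\emph{The obstruction, and the main difficulty.} The heart of the argument---and the hardest step---is proving non-embeddability uniformly over all $n$ for the excluded monoids. The key structural lemma is that \emph{every maximal subgroup of $\UTZ{n}$ is torsion-free}: restricting the diagonal homomorphism $A \mapsto (A_{11}, \dots, A_{nn})$ to the $\mathscr{H}$-class of an idempotent yields an injective homomorphism into a free abelian group, so that $\UTZ{n}$ contains no nontrivial finite subgroup. Consequently the periodic monogenic monoids (which contain a cyclic group $\mathbb{Z}/p$ with $p \geq 2$), and likewise any excluded $M_i$ possessing a nontrivial finite maximal subgroup, cannot be $\UT$-tropical; this is precisely the feature distinguishing $\UT$- from $\MT$-tropicality, since such finite groups do embed in $\MTZ{p}$ as tropical monomial matrices. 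Any excluded candidate not eliminated by torsion alone I would rule out by a direct analysis of the shape of its idempotents and of the $\mathscr{D}$-class $\DU$ of the identity, comparing these against the constraints forced by upper-triangularity in $\UTZ{n}$. I expect establishing the torsion-freeness lemma cleanly, and carrying out this residual $\DU$-analysis for each excluded $M_i$ independently of $n$, to be where essentially all of the work lies.
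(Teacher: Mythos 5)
Your reduction of necessity to Shneerson's classification and your general plan for sufficiency both match the paper in outline, but the core of your negative direction has a genuine gap. The torsion-freeness of maximal subgroups of $\UTZ{n}$ (which is essentially the paper's argument for the non-aperiodic $C_{k,\ell}$) disposes \emph{only} of the non-aperiodic monogenic monoids. Every other candidate you must exclude is torsion-free: $\mathcal{K}$ is the Klein bottle group, $M_2$ and $M_3$ are cancellative and embed in torsion-free one-relator groups, and $M_4$, $M_7$ are one-sided cancellative with no nontrivial idempotents at all. (Note also that $\mathcal{K}$ is missing from your list of candidates surviving Shneerson's theorem, even though it satisfies $x^2y^2 \bumpeq y^2x^2$ and therefore must be explicitly ruled out.) So your proposed fallback --- analysing idempotents and the $\mathscr{D}$-class of the identity --- would have to carry all of $\mathcal{K}, M_2, M_3, M_4, M_7$, and it is not clear how it could: these monoids have essentially no idempotent structure to exploit, so there is no obvious obstruction of that shape, and you give no indication of one.

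The idea you are missing is identity-theoretic rather than structural. By Cain, Johnson, Kambites and Malheiro, $\UTZ{n}$ satisfies the identity $U_{n-1}(xy,yx) \bumpeq V_{n-1}(xy,yx)$, and an induction using cancellativity shows that if any of these identities holds in a \emph{cancellative} semigroup then $xy \bumpeq yx$ holds: at each stage the identity has the form $W\,(UV)\,W' \bumpeq W\,(VU)\,W'$ with nonempty $W, W'$, so cancelling on both sides yields $UV \bumpeq VU$ and one descends to the previous level. Hence every cancellative subsemigroup of $\UTZ{n}$ is commutative. This kills $\mathcal{K}$, $M_2$ and $M_3$ in one stroke (each is cancellative and visibly non-commutative), and $M_4$ and $M_7$ --- to which the corollary does not apply directly, since they are only one-sided cancellative --- are then excluded because each contains an isomorphic copy of $M_3$ (generated by $a$ together with $ba$, resp.\ $ab$). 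On the positive side, be aware that the constructions for $M_5$ and $M_8$ are not routine block overlays: the paper needs $4\times 4$ matrices and a three-parameter normal form $(ba)^\alpha a^\beta b^\gamma$ to verify injectivity, so that part of your sketch also still requires substantive work.
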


The names of the monoids are as they are presented in Shneerson's classification (Theorem~\ref{Thm:Shneerson's}). Alternatively, see the left-most column of Table~\ref{Table: Group properties} for the defining relations of these monoids. As a corollary, we conclude: \textit{there exists an algorithm for deciding whether a given one-relation monoid is $\UT$-tropical}. This is in contrast to the above undecidability result. Following this, in \S\ref{Sec:Ordinary tropical}, we will classify -- with two exceptions -- all $\operatorname{M}$\textit{-tropical} one-relation monoids, i.e. those which embed in $\MTZ{n}$ for some $n \geq 1$. This classification is given as follows:

\begin{theoremB}
Let $M$ be a one-relation monoid which is not $M_4$ or $M_7$. Then $M$ is $\operatorname{M}$-tropical if and only if $M$ is monogenic, $\mathcal{K}$, the bicyclic monoid, or one of $M_2, M_3, M_5, M_6^{(k)}, M_8$, or $M_9^{(k)}$, for some $k \geq 1$.
\end{theoremB}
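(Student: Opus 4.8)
The plan is to reduce Theorem~B to the already-established classification of $\UT$-tropical one-relation monoids together with a handful of genuinely non-triangular representations, obtaining the ``only if'' direction from the fact that tropical matrix semigroups satisfy identities. Since $\UTZ{n}$ is a subsemigroup of $\MTZ{n}$, every $\UT$-tropical monoid is automatically $\MT$-tropical; hence, by the preceding classification of the $\UT$-tropical one-relation monoids, the bicyclic monoid, the free and aperiodic monogenic monoids, and $M_1, M_5, M_6^{(k)}, M_8, M_9^{(k)}$ are all $\MT$-tropical with no further work. The new content therefore splits into two parts: (i) constructing faithful representations in $\MTZ{n}$ for those monoids in the statement that are \emph{not} $\UT$-tropical --- namely the periodic monogenic monoids, $\mathcal{K}$, $M_2$, and $M_3$; and (ii) proving that no one-relation monoid outside the stated list (apart from $M_4$ and $M_7$) can be $\MT$-tropical.

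For the ``only if'' direction I would use that $\MTZ{n}$ satisfies a non-trivial semigroup identity for every $n$ (the analogue for full matrices of the identities for $\UTZ{n}$ recalled above). Granting this, any $\MT$-tropical monoid $M$, being isomorphic to a subsemigroup of some $\MTZ{n}$, inherits that identity and so satisfies a non-trivial identity itself. Shneerson's classification (Theorem~\ref{Thm:Shneerson's}) of the one-relation monoids satisfying a non-trivial identity then forces $M$ into his finite list; in particular every one-relation monoid of exponential growth, i.e.\ every one containing a free subsemigroup, is excluded in a single stroke. This reduces the entire problem to deciding $\MT$-tropicality \emph{within} Shneerson's list, and since the monoids in the statement are precisely those of that list other than $M_4$ and $M_7$ (the $\UT$-tropical monoid $M_1$ already being among those listed), the ``only if'' direction is complete once the constructions in (i) are in place.

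For the constructions I would exploit exactly the feature distinguishing $\MTZ{n}$ from $\UTZ{n}$: full tropical matrices admit genuine torsion via the tropical permutation matrices, which form a copy of the finite symmetric group and in particular contain every finite cyclic group, whereas the relevant substructure of $\UTZ{n}$ is torsion-free. For a periodic monogenic monoid of index $m$ and period $d$ I would send the generator to a block-diagonal matrix $A = A_1 \oplus A_2$, where $A_1$ is the $d$-cycle permutation matrix (supplying period exactly $d$) and $A_2$ is an upper-triangular matrix encoding the aperiodic tail of length $m$, so that $\langle A \rangle$ realises the correct index and period. For $\mathcal{K}$, $M_2$, and $M_3$ I would likewise assemble block matrices combining a bicyclic-type block in the style of \cite{Izhakian2010} with permutation and diagonal blocks chosen so that the defining relation holds; faithfulness would be checked by fixing a normal form for the elements of the monoid and exhibiting, for each parameter of that normal form, a matrix entry from which it can be read back off.

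The main obstacle I anticipate is faithfulness rather than the relation itself: because tropical multiplication is $\max$-plus and therefore forgets information, injectivity of a representation never comes for free, and for the non-triangular block constructions one must arrange the blocks so that no two distinct normal forms collide in \emph{every} entry simultaneously --- this entry-by-entry bookkeeping is where the real work lies. Finally, the same difficulty explains why $M_4$ and $M_7$ are left open: the block constructions above do not appear to realise their structure faithfully, while the identity-based obstruction of the ``only if'' direction does not exclude them either, since they do satisfy non-trivial identities (being in Shneerson's list), so their $\MT$-tropicality remains undetermined.
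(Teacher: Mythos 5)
Your overall architecture coincides with the paper's: the ``only if'' direction combines the fact that $\MTZ{n}$ satisfies a non-trivial identity with Shneerson's classification (Theorem~\ref{Thm:Shneerson's}); the $\UT$-tropical cases carry over for free; and the finite (in particular all periodic monogenic) monoids are dispatched in the paper by embedding the full transformation monoid $T_n$ into $\MTZ{n}$ via tropical permutation matrices, for which your block-diagonal cycle-plus-triangular construction is a sound, if more hands-on, substitute.

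The gap lies in the step that carries essentially all of the new content: the faithful representations of $\mathcal{K}$, $M_2$ and $M_3$. Your ansatz of ``a bicyclic-type block combined with permutation and diagonal blocks'' does not identify the mechanism that makes these three monoids work, and the bicyclic ingredient points in the wrong direction: all three are cancellative and group-embeddable, the universal groups of $M_2$ and $M_3$ are both isomorphic to the Klein bottle group $\mathcal{K}$, and since a faithful representation of a group lands in a subgroup of $\MTZ{n}$, Shitov's theorem \cite{Shitov2012} (groups embed in $\MTZ{n}$ exactly when they embed in $\mathbb{Z} \wr S_n$) forces tropical \emph{monomial} (diagonal-times-permutation) matrices with no bicyclic block in sight. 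This is precisely what the paper exploits: $\mathcal{K}$ is realised as $\langle ab, ab^{-1}\rangle \leq \mathbb{Z}\wr S_2$ and hence by two $2\times 2$ anti-diagonal matrices (Lemma~\ref{Lem:Klein-bottle-embeds-in-U}); $M_3$ is represented by one diagonal and one anti-diagonal matrix, with injectivity checked against Shneerson's normal form (Lemmas~\ref{Lem:aba=b_normal_form} and~\ref{Lem:aba=b-is_tropical_real}); and $M_2$ is reduced to $M_3$ via the embedding $M_2 \cong \langle ba, b\rangle_{M_3}$ induced by an automorphism of the common universal group (Lemmas~\ref{Lem:M2-in-M3} and~\ref{Lem:a2=b2-is_tropical_real}). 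Without this observation your entry-by-entry faithfulness check has no concrete matrices to run on. Two smaller points: the identity satisfied by $\MTZ{n}$ that your ``only if'' direction grants is a genuinely deep external theorem, not a formal analogue of the triangular case recalled in the paper, so it must be cited rather than assumed; and $M_1$ is in fact absent from the list in the statement (an apparent slip in the paper, cf.\ Table~\ref{Table: Group properties}), so it is not ``already among those listed'' as you assert.
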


The two exceptions, $M_4$ and $M_7$, are anti-isomorphic, and hence either both embed in $\MTZ{n}$ for some $n \geq 1$, or else neither of them do. We treat these open cases briefly in \S\ref{Subsec:Open_m4_m7}. Our results are summarised in Table~\ref{Table: Group properties}. Finally, we pose the following question which falls naturally out of our work. Let $M$ a one-relation monoid satisfying a non-trivial identity. Is there some $n \geq 1$ and a variety $\operatorname{S}_n(\overline{\mathbb{Z}})$ of tropical matrices satisfying precisely the same identities as $M$? A positive answer in the case of the bicyclic monoid, with $\operatorname{S}_n(\overline{\mathbb{Z}}) = \UTZ{2}$, was obtained, as mentioned earlier, by Daviaud, Johnson \& Kambites \cite{Daviaud2018}. 

\

\begin{table}[h]
\centering
\scalebox{0.85}{
\begin{tabular}{c|c|c|||c|c|}
\cline{2-5}  & $\UT$-tropical? & $\UT$-rank & $\operatorname{M}$-tropical? & $\operatorname{M}$-rank\\ \hline 
\multicolumn{1}{|l|}{$\mathbb{N}$ \qquad\hfill $(\varnothing)$}  & \checkmark \yes \quad  &  1 & \checkmark \yes \quad & 1\\\hline
\multicolumn{1}{|l|}{$C_{k,\ell}$ \quad\hfill $(a^k = a^\ell)$}  & \no $\times$ \quad (Lem.~\ref{Lem:C-k-ell-not-UT-tropical}) &  -- & \checkmark \yes \quad  & $\leqslant k$ \\\hline
\multicolumn{1}{|l|}{$C_{\ell+1,\ell}$ \quad\hfill $(a^{\ell+1} = a^{\ell})$}  & \yes \checkmark \quad (Lem.~\ref{Lem:Aperiodic-monogenic-are-UT}) &  $\leqslant \ell$ & \checkmark \yes (Lem.~\ref{Lem:Aperiodic-monogenic-are-UT}) \quad & $\leqslant \ell$ \\\hline
\multicolumn{1}{|l|}{$\mathcal{B}$ \quad\hfill $(ab=1)$}  & \checkmark \yes \quad \cite[Prop.~4.3]{Izhakian2010}  &  2 & \checkmark \yes \quad \cite[Prop.~4.3]{Izhakian2010} & 2\\\hline
\multicolumn{1}{|l|}{$\mathcal{K}$ \qquad\hfill $(abba=1)$}  & \no $\mathbf{\times}$ \quad (Cor.~\ref{Cor:Canc.-subsems-of-UTZ-are-comm})  & -- & \checkmark \yes \quad (Lem.~\ref{Lem:Klein-bottle-embeds-in-U})  & 2 \\\hline
\multicolumn{1}{|l|}{$M_1$ \quad\hfill $(ab=ba)$}  & \checkmark\yes \quad  & 2 & \checkmark\yes \quad  & 2\\ \hline
\multicolumn{1}{|l|}{$M_2$ \quad \hfill $(a^2 = b^2)$}  & $\times$\no \quad (Lem.~\ref{Lem:M2-M3-not-UT}) & -- & \checkmark\yes \quad (Lem.~\ref{Lem:a2=b2-is_tropical_real}) & 2 \\ \hline
\multicolumn{1}{|l|}{$M_3$ \quad \hfill $(aba=b)$}  & $\times$\no \quad (Lem.~\ref{Lem:M2-M3-not-UT}) & -- & \checkmark\yes \quad (Lem.~\ref{Lem:aba=b-is_tropical_real})  & 2 \\ \hline
\multicolumn{1}{|l|}{$M_4$ \qquad \hfill $(aba^2=ba)$}  & $\times$\no \quad (Lem.~\ref{Lem:M3-in-M4-and-M7}) & -- & \dunno  &  \\ \hline
\multicolumn{1}{|l|}{$M_5$ \qquad\hfill $(aba=ba)$}  & \yes \checkmark \quad (Lem.~\ref{Lem:aba=ba(M5)}) & $\leqslant 4$ & \yes \checkmark \quad (Lem.~\ref{Lem:aba=ba(M5)}) & $\leqslant 4$ \\ \hline
\multicolumn{1}{|l|}{$M_6^{(k)}$ \quad\hfill $(ab=b^k)$} & \yes \checkmark \quad (Lem.~\ref{Lem:ab=bk_when_k=1}+\ref{Lem:ab=bk_bigger_k}) & 2 & \yes \checkmark \quad (Lem.~\ref{Lem:ab=bk_when_k=1}+\ref{Lem:ab=bk_bigger_k}) & 2 \\ \hline
\multicolumn{1}{|l|}{$M_7$ \qquad\hfill $(a^2ba=ab)$}  & $\times$\no \quad (Lem.~\ref{Lem:M3-in-M4-and-M7}) & -- & \dunno &  \\ \hline
\multicolumn{1}{|l|}{$M_8$ \qquad\hfill $(aba=ab)$}  & \yes \checkmark \quad (Lem.~\ref{Lem:aba=ab(M8)}) & $\leqslant 4$ & \yes \checkmark \quad (Lem.~\ref{Lem:aba=ab(M8)}) & $\leqslant 4$ \\ \hline
\multicolumn{1}{|l|}{$M_9^{(k)}$ \quad\hfill $(ba=b^k)$} & \yes \checkmark \quad (Lem.~\ref{Lem:ba=bk_when_k=1}+\ref{Lem:ba=bk_bigger_k}) & 2 & \yes \checkmark \quad (Lem.~\ref{Lem:ba=bk_when_k=1}+\ref{Lem:ba=bk_bigger_k}) & 2  \\ \hline
\end{tabular}
}
\vspace{0.5cm}
\caption{The tropical one-relation monoids. A name and a defining relation for each monoid is given in the leftmost column. Every $\leqslant$-symbol and every question mark (inside a blue box) indicates an open problem to be solved. Here, $\UT$-rank (resp. $\operatorname{M}$-rank) indicates the least $n$ such that the monoid embeds as a multiplicative subsemigroup of $\UTZ{n}$ (resp. $\MTZ{n}$). }
\label{Table: Group properties}
\end{table}

\setcounter{theoremB}{0}

\clearpage
\section{Preliminaries}

\subsection{Identities in one-relation monoids}

Even a small amount of work with one-relation monoids reveals that they often contain non-commutative free subsemigroups. It therefore seems natural to wish to classify precisely which one-relation monoids $M$ do not contain free subsemigroups, and to classify which such $M$ satisfy some non-trivial identity. Both these questions were answered by Shneerson in 1972 -- we give an overview of this result now (Theorem~\ref{Thm:Shneerson's}). The interested reader may note that Shneerson \cite{Shneerson1989} has studied the axiomatic ranks of the varities generated by the one-relation monoids satisfying a non-trivial identity, a subject as closely related to identities as it is outside of the scope of the present article. 

First, we remark that it is obvious that any monogenic one-relation monoid satisfies a non-trivial identity, being commutative. We continue with the special one-relation monoids. As mentioned in the introduction, Adian \cite{Adian1966} proved that \textit{any} special monoid satisfying a non-trivial identity is either monogenic, the bicyclic monoid, or a group. Hence, any special one-relation monoid, which is not the bicyclic monoid or $\mathbb{N}$, satisfies a non-trivial identity if and only if it is a positive one-relator group satisfying a non-trivial identity (here positive is used in the sense of Baumslag  \cite{Baumslag1971}, see also Perrin \& Schupp \cite{Perrin1984}). Which positive (non-monogenic) one-relator groups, then, satisfy some non-trivial identity? Shneerson \cite{Shneerson1972a}, relying on old results by Magnus \cite{Magnus1930}, proved that there is only one such group: namely, the one-relator group $\mathcal{K} = \pres{Mon}{a,b}{abba=1}$. This is indeed a group, as the defining relation tells us that $a$ is right invertible ($a \cdot bba = 1$), and that $a$ is left invertible ($abb \cdot a = 1$). Thus $a$ is invertible in $M$, so we conclude that also $bbaa = 1$ and $aabb = 1$. Thus $b$ is invertible, so $\mathcal{K}$ is a group. Hence 
\[
\mathcal{K} = \pres{Mon}{a,b}{abba=1} = \pres{Gp}{a,b}{abba=1} \cong \pres{Gp}{a,b}{bab^{-1} = a^{-1}} \cong \mathbb{Z} \rtimes \mathbb{Z},
\]
the fundamental group of the Klein bottle. This is a virtually abelian group, having $\mathbb{Z}^2$ as an index $2$ (normal) subgroup, as can be seen by passing to a degree $2$ cover of the Klein bottle by the $2$-torus. Hence, $\mathcal{K}$ satisfies the identity $x^2 y^2 \bumpeq y^2 x^2$. 

We can now state Shneerson's classification of the one-relation monoids satisfying a non-trivial identity, with the bulk coming from the non-special case:

\begin{theorem}[Shneerson \cite{Shneerson1972a, Shneerson1972b}\footnote{These two articles are currently not readily accessible. The author of the present article has translated Shneerson's articles into English, having received copies of the Russian originals from L. M. Shneerson and M. V. Volkov, for which the author expresses his gratitude. These English translations will soon be available online. }]\label{Thm:Shneerson's}
Let $M = \pres{Mon}{A}{u=v}$. Then $M$ satisfies a non-trivial identity if and only if $M$ is monogenic, $\mathcal{K}$, the bicyclic monoid, or $|A|=2$, say $A = \{ a, b \}$, and $u=v$ is one of the following: \\
\[
\begin{tabular}{lll}
\textnormal{(1)} $ab=ba$, & \textnormal{(2)} $a^2=b^2$, & \textnormal{(3)} $aba = b$, \\
\textnormal{(4)} $aba^2 = ba$, & \textnormal{(5)} $aba = ba$, & \textnormal{(6)} $ab = b^k, \quad (k = 1, 2, \dots)$ \\
\textnormal{(7)} $a^2ba=ab$, & \textnormal{(8)} $aba = ab$, & \textnormal{(9)} $ba = b^k, \quad (k = 1, 2, \dots).$
\end{tabular}
\]
These monoids satisfy, respectively, the following identities:
\[
\begin{tabular}{lll}
\textnormal{(1)} $xy \bumpeq yx$, & \textnormal{(2)} $x^2y^2 \bumpeq y^2x^2$, & \textnormal{(3)} $x^2y^2 \bumpeq y^2x^2$, \\
\textnormal{(4)} $(xyx)^2(yx)^2 \bumpeq (yx)^2(xyx)^2$, & 
\textnormal{(5)} $xyxyx \bumpeq yx^2yx$, & 
\textnormal{(6)} $xyxyx \bumpeq yx^2yx$, \\
\textnormal{(7)} $(xyx)^2(xy)^2 \bumpeq (xy)^2(xyx)^2$, & 
\textnormal{(8)} $xyxyx \bumpeq xyx^2y$, & 
\textnormal{(9)} $xyxyx \bumpeq xyx^2y$.
\end{tabular}
\]
Furthermore, in each of the above monoids $M$, given any words $w_1, w_2 \in X^\ast$ one can effectively decide whether $w_1 \bumpeq w_2$ holds in $M$.
\end{theorem}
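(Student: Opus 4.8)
The plan is to establish the three assertions in turn — the explicit identities, the ``if and only if'' classification, and the decidability clause — using throughout the single elementary principle that a semigroup containing a free subsemigroup of rank $2$ satisfies no non-trivial identity. This gives a clean division of labour. To prove the \emph{if} direction it suffices to exhibit, for each monoid in the list, the stated identity; to prove the \emph{only if} direction it suffices, by contraposition, to show that \emph{every} one-relation monoid \emph{not} on the list contains a free subsemigroup of rank $2$ (and hence satisfies no non-trivial identity). I would first record the two symmetries that cut down the bookkeeping: the anti-automorphism reversing words, which sends an identity $w \bumpeq w'$ to its reverse and a relation $u=v$ to the reversed relation $\widetilde u = \widetilde v$, and the generator-swap $a \leftrightarrow b$. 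Relations and identities then need only be analysed up to these two operations.

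The central engine for producing free subsemigroups is a \emph{lifting} argument: to show that $M = \pres{Mon}{A}{u=v}$ contains a free subsemigroup of rank $2$ it is enough to find a monoid $N$ that already visibly contains one — a free monoid on two generators, the bicyclic monoid $\mathcal{B}$, or a tropical matrix monoid $\MTZ{n}$ — together with a homomorphism $\psi \colon M \to N$ (equivalently, an assignment of the generators of $M$ to $N$ respecting $\psi(u)=\psi(v)$) and two elements $p,q \in M$ whose images $\psi(p),\psi(q)$ generate a free subsemigroup of $N$; distinct positive words in $p,q$ then have distinct images, hence are distinct in $M$. I would organise the ``only if'' direction around this. \textbf{Generator count.} If some generator does not occur in $\{u,v\}$, then $M$ is a free product $M' \ast \mathbb{N}$, which contains a free subsemigroup by the free-product normal form whenever $M'$ is non-trivial (the degenerate cases, where $M'$ collapses, leave $M$ monogenic and hence already on the list). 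If $|A| \geq 3$ and every generator occurs, a direct construction of a free pair, again via the lifting argument, produces a free subsemigroup. Either way one is reduced to $|A|=2$, say $A=\{a,b\}$, with both $a$ and $b$ occurring in $\{u,v\}$.

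For $|A|=2$ I would separate the \emph{special} relations $u=1$ from the \emph{non-special} ones. The special case is exactly the discussion preceding the theorem: by Adian's theorem \cite{Adian1966} a special monoid satisfying a non-trivial identity is monogenic, the bicyclic monoid, or a group, and by Magnus' results together with Baumslag's analysis of positive one-relator groups \cite{Magnus1930, Baumslag1971} the only non-monogenic positive one-relator group satisfying a non-trivial identity is $\mathcal{K}=\pres{Gp}{a,b}{abba=1}$; every other special two-generator relation yields a free subsemigroup via the lifting argument. The non-special case, with $u$ and $v$ both non-empty, is the heart of the matter, and is handled by a combinatorial case analysis organised by the first and last letters and the content of $u$ and $v$ (reduced by the two symmetries above). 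In the generic configurations one constructs the homomorphism $\psi$ and the free pair $p,q$ explicitly — typically via an abelianisation to $\mathbb{Z}^2$, a projection onto a free factor, or a map into $\mathcal{B}$ — while the finitely many boundary configurations are shown to be precisely the relations $(1)$--$(9)$. For each of these last monoids I would then verify the stated identity directly, most conveniently through a faithful representation that trivialises the computation: commutativity for $(1)$, the virtually abelian structure noted before the theorem for $\mathcal{K}$, and faithful embeddings into $\mathbb{Z}^2$, into $\mathcal{B}$, or into a tropical matrix monoid for the remaining relations. This simultaneously settles the \emph{if} direction and the explicit identities.

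Finally, the \emph{furthermore} clause follows from these same faithful representations. For each listed monoid $M$, deciding whether an identity $w_1 \bumpeq w_2$ holds translates, under a faithful embedding into $\mathbb{Z}^n$, $\mathcal{B}$, or $\MTZ{n}$, into the validity of a universally quantified (piecewise-)linear sentence over $\mathbb{Z}^n$ — equivalently the solvability of a finite system of linear conditions — which is decidable by Presburger arithmetic or, as in Shneerson's original approach, by locating lattice points in convex polytopes via Minkowski's theorem. \textbf{Main obstacle.} The genuinely hard step is the non-special two-generator case analysis: producing the lifting homomorphism and free pair uniformly across all generic relations while proving that the collapse to polynomial growth occurs in \emph{exactly} the configurations $(1)$--$(9)$. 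This requires tight control over precisely how a single rewriting rule $u \leftrightarrow v$ can, and cannot, equate distinct words.
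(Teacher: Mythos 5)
Your outline correctly reconstructs the architecture of the historical argument, which the paper itself does not reprove (Theorem~\ref{Thm:Shneerson's} is quoted from Shneerson's 1972 papers): the free-subsemigroup contrapositive, the reversal and generator-swap symmetries, the reduction of the special case to Adian's theorem plus Magnus--Baumslag positive one-relator groups, and Minkowski-style lattice-point decidability (the paper's footnote confirms this last point). But the pivotal step of your proposal is a placeholder rather than a proof: saying that ``the finitely many boundary configurations are shown to be precisely the relations (1)--(9)'' simply restates the theorem. You offer no invariant of the relation word $u=v$ that actually carves out these nine families; for instance, nothing in the proposal explains why $aba^2 = ba$ (case (4)) fails to yield a free subsemigroup while superficially similar relations such as $ab^2a = ba$ or $a^2ba^2 = ba$ do. Constructing the lifting homomorphism $\psi$ and the free pair $p,q$ ``typically via an abelianisation, a projection, or a map into $\mathcal{B}$'' is exactly the hard uniform construction you yourself flag as the main obstacle, and it is left entirely undone; with it missing, neither direction of the classification is established in the non-special case, which is the entire content of the theorem.

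There is also a concrete failure in your proposed verification of the identities and of decidability via faithful representations. For cases (4) and (7) no such representation is available: $M_4$ is non-commutative, so it does not embed in $\mathbb{Z}^n$; it cannot embed in $\mathcal{B}$, since $\mathcal{B} \leq \UTZ{2}$ (Lemma~\ref{Lem:bicyclic_embeds}) while $M_3 \leq M_4$ (Lemma~\ref{Lem:M3-in-M4-and-M7}) and $M_3$ is not $\UT$-tropical (Lemma~\ref{Lem:M2-M3-not-UT}); and whether $M_4$ embeds in any $\MTZ{n}$ is precisely the paper's open Conjecture~\ref{Conj:ABAA=BA-is-not-tropical}, conjectured to be \emph{false}. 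So identity (4) and the decidability clause for $M_4$ and $M_7$ must be handled by direct normal-form computations, as in Shneerson's original work. Even in the representable cases your phrase ``trivialises the computation'' overreaches: an embedding $M \leq \UTZ{n}$ transfers only the identities of $\UTZ{n}$, and the identity $xyxyx \bumpeq yx^2yx$ of case (6) is strictly shorter than Adian's identity \eqref{Eq:Adian-identity}, hence fails in $\UTZ{2}$ even though $M_6^{(k)} \leq \UTZ{2}$; verifying it requires a quantified piecewise-linear computation over the parametrised image, i.e.\ over the normal form \eqref{Eq:M6k_normal_form} -- which is workable via Presburger arithmetic, but is genuine work your proposal defers rather than performs.
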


Throughout this article, we shall refer to the monoids in Theorem~\ref{Thm:Shneerson's} as they are numbered there. For example, the monoid $\pres{Mon}{a,b}{a^2ba=ab}$, being case (7), will be referred to as $M_7$. Analogously, the monoid $\pres{Mon}{a,b}{ab=b^k}$, being case (6) with parameter $k \geq 1$, will be called $M^{(k)}_6$. 

\begin{remark}
We remark that Theorem~\ref{Thm:Shneerson's} is also a complete classification of the one-relation presentations of the (non-special) one-relation monoids satisfying some non-trivial identity. That is, given any non-special one-relation monoid $M = \pres{Mon}{A}{u=v}$, one can decide whether $M$ satisfies some non-trivial identity by simply checking if its relation is one of the relations in Theorem~\ref{Thm:Shneerson's}. 
\end{remark}

\begin{remark}
It follows immediately from Adian's theory of left/right cycle-free semigroups from \cite[Chapter~II]{Adian1966} that in Theorem~\ref{Thm:Shneerson's}, the monoids (1)--(3) are cancellative; the monoids (4)--(6) are left (but not right) cancellative; and (7)--(9) are right (but not left) cancellative.
\end{remark}

\subsection{Identities in tropical matrix semigroups}

We give a very quick summary of some useful tools arising from identities in tropical matrix semigroups. Let $X \in \MTZ{n}$. Then $X^\top$ will denote the (usual) transpose of $X$. Furthermore, $\mathbf{0}_{n}$ will denote the $(n \times n)$-matrix in which all entries are $-\infty$. The notation $\mathbf{0}$ arises from the fact that this is the zero matrix of the tropical world, in the sense that 
\begin{align*}
X \oplus \mathbf{0}_{n} &= \mathbf{0}_{n}\oplus X = X \quad \text{and}\\
X \otimes \mathbf{0}_{n} &= \mathbf{0}_{n}\otimes X = \mathbf{0}_{n}, 
\end{align*}
for all $X \in \MTZ{n}$. Analogously, $\mathbf{I}_n$ will denote the $(n \times n)$-matrix in which all entries on the diagonal are $0$, and all other entries are $-\infty$. This is the tropical analogue of the usual identity matrix, in the sense that for all $X \in \MTZ{n}$, we have
\[
X \otimes \mathbf{I}_{n} = \mathbf{I}_{n}\otimes X = X.
\]

We shall save some work in finding tropical representations of our monoids by using transposes. Let $w \equiv a_1 a_2 \cdots a_n$ be a word with $a_i \in A$. Then $w^\trev$ will denote the \textit{reversed} word $a_n a_{n-1} \cdots a_2 a_1$, i.e. the word $w$ read backwards. Let $M = \pres{Mon}{A}{u_i = v_i \: (i \in I)}$. Then $M^\trev$ denotes the \textit{reversed} monoid, i.e. the monoid $M = \pres{Mon}{A}{u_i^\trev = v_i^\trev \: (i \in I)}$. Clearly, $M$ and $M^\trev$ are anti-isomorphic. The following lemma is also easy to prove. 

\begin{lemma}\label{Lem:If-reverse-then-transpose}
Let $M = \pres{Mon}{A}{u_i = v_i \: (i \in I)}$, and suppose $M$ is isomorphic to the subsemigroup of $\MTZ{n}$ generated by the matrices $X_1, X_2, \dots, X_m$. Then $M^{\textnormal{rev}}$ is isomorphic to the semigroup generated by the matrices $X_1^\top, X_2^\top, \dots, X_m^\top$.
\end{lemma}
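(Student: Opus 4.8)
The plan is to reduce everything to two anti-homomorphism facts: that transposition is an anti-homomorphism of $\MTZ{n}$, and that reversal of words induces the anti-isomorphism $M^\trev \to M$ already noted above. Write $\varphi\colon M \to \MTZ{n}$ for the given embedding, sending each generator $a_j \in A = \{a_1,\dots,a_m\}$ to $X_j$. First I would record the identity $(X \otimes Y)^\top = Y^\top \otimes X^\top$ for all $X,Y \in \MTZ{n}$, which is immediate from the definition of tropical multiplication:
\[
\bigl((X\otimes Y)^\top\bigr)_{ij} = (X\otimes Y)_{ji} = \bigoplus_k X_{jk}\otimes Y_{ki} = \bigoplus_k Y^\top_{ik}\otimes X^\top_{kj} = (Y^\top \otimes X^\top)_{ij}.
\]
Iterating this over a word $w \equiv a_{i_1}\cdots a_{i_\ell}$, the free-monoid homomorphism $\psi\colon A^+ \to \MTZ{n}$ determined by $a_j \mapsto X_j^\top$ satisfies $\psi(w) = \varphi(w^\trev)^\top$, since $X_{i_1}^\top \otimes \cdots \otimes X_{i_\ell}^\top = (X_{i_\ell}\otimes \cdots \otimes X_{i_1})^\top = \varphi(w^\trev)^\top$.

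Second, I would verify that $\psi$ respects the defining relations of $M^\trev$, so that it factors through a homomorphism $\overline\psi\colon M^\trev \to \MTZ{n}$. For each relation $u_i^\trev = v_i^\trev$ of $M^\trev$ the formula above gives $\psi(u_i^\trev) = \varphi(u_i)^\top$ and $\psi(v_i^\trev) = \varphi(v_i)^\top$; since $u_i = v_i$ holds in $M$ we have $\varphi(u_i) = \varphi(v_i)$, and transposing both sides yields $\psi(u_i^\trev) = \psi(v_i^\trev)$. Hence $\overline\psi$ is well defined, and its image is precisely the subsemigroup $\langle X_1^\top,\dots,X_m^\top\rangle$ of $\MTZ{n}$.

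Third, I would establish that $\overline\psi$ is injective. Suppose $\overline\psi(\overline{w_1}) = \overline\psi(\overline{w_2})$; then $\varphi(w_1^\trev)^\top = \varphi(w_2^\trev)^\top$, and as transposition is an involution (hence a bijection) on matrices this forces $\varphi(w_1^\trev) = \varphi(w_2^\trev)$. Injectivity of $\varphi$ then gives $w_1^\trev = w_2^\trev$ in $M$. To transfer this back, I invoke the reversal-induced anti-isomorphism $M^\trev \to M$ noted in the excerpt, which sends $\overline{w_1},\overline{w_2}$ to $\overline{w_1^\trev},\overline{w_2^\trev}$; being a bijection, it turns $\overline{w_1^\trev} = \overline{w_2^\trev}$ in $M$ into $\overline{w_1} = \overline{w_2}$ in $M^\trev$. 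This last congruence bookkeeping is the only step needing any care; everything else is a direct consequence of the anti-homomorphism property of $\top$. Thus $\overline\psi$ is an isomorphism of $M^\trev$ onto $\langle X_1^\top,\dots,X_m^\top\rangle$, as claimed.
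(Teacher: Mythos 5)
Your proof is correct and is exactly the standard argument the paper has in mind — it states this lemma without proof ("also easy to prove"), and your composition of the two anti-isomorphisms (matrix transposition and word reversal) is the intended justification. The congruence bookkeeping in your third step is handled properly, so nothing is missing.
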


In particular, it suffices to restrict our attention to the left cycle-free one-relation monoids in our classification. For example, $M_5$ is $\UT$-tropical if and only if $M_8$ is. 

Following \cite{Cain2017}, we define the functions 
\begin{alignat*}{2}
U_0(p,q) &= p \qquad\qquad &&V_0(p,q)=q \\
U_1(p,q) &= pqppq \qquad\qquad &&V_1(p,q)=pqqpq
\end{alignat*}
and for $i>1$ inductively define
\[
U_i(p,q) = U_1(U_{i-1}(p,q), V_{i-1}(p,q)) \quad V_i(p,q) = V_1(U_{i-1}(p,q), V_{i-1}(p,q)).
\]
For example, it is easy to check that $U_1(xy,yx) \bumpeq V_1(xy,yx)$ is simply Adian's identity, i.e. \eqref{Eq:Adian-identity}. This is satisfied by $\UTZ{2}$. More generally, we have:

\begin{prop}[{\cite[Theorem~4.2]{Cain2017}}]\label{Prop:Cain_UT_id}
For every $n \geq 1$, the monoid $\UTZ{n}$ satisfies the identity $U_{n-1}(xy,yx) \bumpeq V_{n-1}(xy,yx)$.
\end{prop}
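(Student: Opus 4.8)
The plan is to induct on $n$. For the base case $n = 1$, the semiring $\UTZ{1}$ is just the commutative $(\TZ, \max, +)$, so it satisfies $U_0(xy,yx) \bumpeq V_0(xy,yx)$, which is precisely $xy \bumpeq yx$. For the inductive step I would exploit the block structure of upper triangular tropical matrices. Writing each $X \in \UTZ{n}$ as a top-left $(n-1)\times(n-1)$ block together with a final column and the corner scalar $X_{nn}$, one checks that the $\otimes$-product respects this decomposition: deleting the last row and column yields a multiplicative map $\pi \colon \UTZ{n} \to \UTZ{n-1}$, and deleting the first row and column yields a second multiplicative map $\rho \colon \UTZ{n} \to \UTZ{n-1}$.

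Now fix arbitrary $X, Y \in \UTZ{n}$ and set $\mathbf P = U_{n-2}(XY,YX)$ and $\mathbf Q = V_{n-2}(XY,YX)$, so that the two sides of the desired identity are $\mathbf P \mathbf Q \mathbf P \mathbf P \mathbf Q$ and $\mathbf P \mathbf Q \mathbf Q \mathbf P \mathbf Q$; these agree in positions $1,2,4,5$ and differ only in the third factor. Since $\pi$ and $\rho$ are homomorphisms, $\pi(\mathbf P) = U_{n-2}(\pi(X)\pi(Y), \pi(Y)\pi(X))$, and likewise for $\rho$ and for $\mathbf Q$. Applying the induction hypothesis inside $\UTZ{n-1}$ then forces $\pi(\mathbf P) = \pi(\mathbf Q)$ and $\rho(\mathbf P) = \rho(\mathbf Q)$. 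As the corner $(1,n)$ is the only entry lying in neither the top-left nor the bottom-right $(n-1)$-block, I conclude that $\mathbf P$ and $\mathbf Q$ coincide in every entry except possibly at $(1,n)$. This reduction is the structural heart of the argument.

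It remains to compare the two five-fold products, written as $\mathbf P \mathbf Q \cdot M \cdot \mathbf P \mathbf Q$ with $M = \mathbf P$ or $M = \mathbf Q$. Reading matrix products as maximum-weight nondecreasing paths, the corner entry $M_{1n}$ of the middle factor can only feed into paths that reach index $1$ before $M$ and leave index $n$ after $M$; by upper-triangularity these contribute only to the $(1,n)$-entry of the whole product. Hence the two products agree off the corner automatically, and at the corner each equals $\max(C,\, 2\alpha + M_{1n} + 2\omega)$, where $\alpha = \mathbf P_{11}$, $\omega = \mathbf P_{nn}$, and $C$ is the best corner-path not using $M_{1n}$ — a value independent of the choice of $M$, since $\mathbf P$ and $\mathbf Q$ agree off $(1,n)$.

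The crux, and the step I expect to be the main obstacle, is a domination claim: $C \ge 2\alpha + \max(\mathbf P_{1n}, \mathbf Q_{1n}) + 2\omega$, which forces both maxima to equal $C$ and finishes the induction. To verify it I would exhibit explicit competing paths that route the single jump from index $1$ to index $n$ through one of the four outer factors rather than through $M$, staying on the diagonal otherwise. A short case split on whether $\alpha \ge \omega$ or $\omega \ge \alpha$ shows that one such path always attains weight at least $2\alpha + \mathbf P_{1n} + 2\omega$ (and symmetrically for $\mathbf Q_{1n}$); since every one of these paths avoids $M_{1n}$, each is bounded above by $C$. Assembling these estimates gives equality of the corner entries, hence of the two products, closing the induction. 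The delicate point is choosing the competing paths so that the case analysis covers all sign configurations of $\alpha - \omega$; everything else is bookkeeping with the path description of tropical products.
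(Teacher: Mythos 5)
Your argument is correct, but note that the paper itself gives no proof of this proposition: it is quoted directly from Cain, Klein, Kubat and Okni\'{n}ski \cite[Theorem~4.2]{Cain2017}, and your induction --- peeling off the two $(n-1)\times(n-1)$ corner blocks via the multiplicative maps $\pi$ and $\rho$, reducing everything to the single entry $(1,n)$, and then comparing $\mathbf P\mathbf Q M\mathbf P\mathbf Q$ for $M\in\{\mathbf P,\mathbf Q\}$ --- is essentially a reconstruction of the elementary proof in that reference. The step you flag as the crux does go through exactly as you suggest: the competing paths that jump from index $1$ to index $n$ in the first or fourth factor give $C\ge\max(\mathbf P_{1n}+4\omega,\;3\alpha+\mathbf P_{1n}+\omega)$, and subtracting $2\alpha+2\omega+\mathbf P_{1n}$ leaves $\max(2\omega-2\alpha,\;\alpha-\omega)\ge 0$, with the symmetric computation (second and fifth factors) handling $\mathbf Q_{1n}$; the degenerate cases $\alpha=-\infty$ or $\omega=-\infty$ are trivial since then the middle-factor corner path contributes $-\infty$ anyway. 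Nothing is missing beyond writing out that bookkeeping.
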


\begin{corollary}\label{Cor:Canc.-subsems-of-UTZ-are-comm}
Let $M$ be a cancellative subsemigroup of $\UTZ{n}$ for some $n \geq 1$. Then $M$ is commutative.
\end{corollary}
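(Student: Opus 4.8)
The plan is to exploit the fact that a subsemigroup inherits every identity of the ambient semigroup, and then to use cancellativity to strip the identity of Proposition~\ref{Prop:Cain_UT_id} down to plain commutativity. Since $M$ is a subsemigroup of $\UTZ{n}$, that proposition guarantees that $M$ satisfies $U_{n-1}(xy,yx) \bumpeq V_{n-1}(xy,yx)$. Fix arbitrary $a,b \in M$ and, for $0 \leq i \leq n-1$, write $P_i = U_i(ab,ba)$ and $Q_i = V_i(ab,ba)$; these are genuine elements of $M$, since $ab, ba \in M$ and $M$ is closed under products. Substituting $x \mapsto a$ and $y \mapsto b$ into the identity yields $P_{n-1} = Q_{n-1}$, while the desired conclusion $ab=ba$ is exactly the statement $P_0 = Q_0$, because $U_0(p,q)\equiv p$ and $V_0(p,q)\equiv q$. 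It therefore suffices to descend from index $n-1$ down to index $0$.

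The engine of the descent is the self-similar structure of the words $U_i,V_i$. Unwinding the recursion $U_i = U_1(U_{i-1},V_{i-1})$, $V_i = V_1(U_{i-1},V_{i-1})$ together with $U_1(p,q)\equiv pqppq$ and $V_1(p,q)\equiv pqqpq$, I would record the factorisations (as equalities of elements of $M$)
\[
P_i = P_{i-1}\,Q_{i-1}\,P_{i-1}\,P_{i-1}\,Q_{i-1}, \qquad Q_i = P_{i-1}\,Q_{i-1}\,Q_{i-1}\,P_{i-1}\,Q_{i-1}.
\]
The crucial point is that both right-hand sides begin with the common left factor $P_{i-1}Q_{i-1}$ and end with the common right factor $P_{i-1}Q_{i-1}$. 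Hence, assuming $P_i = Q_i$, cancellativity of $M$ lets me delete $P_{i-1}Q_{i-1}$ on the left (leaving $P_{i-1}P_{i-1}Q_{i-1} = Q_{i-1}P_{i-1}Q_{i-1}$) and then $P_{i-1}Q_{i-1}$ on the right, collapsing the equation to $P_{i-1} = Q_{i-1}$.

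Iterating this single cancellation step $n-1$ times, starting from $P_{n-1}=Q_{n-1}$, produces $P_0 = Q_0$, that is $ab = ba$; as $a,b$ were arbitrary, $M$ is commutative. When $n=1$ the identity is already $xy \bumpeq yx$ and there is nothing to descend. I expect the only genuine content to be the verification of the two factorisations and the claim that the common prefix and suffix are in each case exactly $P_{i-1}Q_{i-1}$; once this is in hand the argument is a mechanical induction. The one subtlety to flag is that every cancellation is performed among bona fide elements of $M$, so that it is the cancellativity of $M$ — and not that of $\UTZ{n}$, which does not hold — that is being invoked.
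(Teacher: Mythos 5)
Your proof is correct and follows essentially the same route as the paper: both invoke Proposition~\ref{Prop:Cain_UT_id} and then use the recursion $U_i = U_1(U_{i-1},V_{i-1})$, $V_i = V_1(U_{i-1},V_{i-1})$ together with two-sided cancellation of a common prefix and suffix to descend from the satisfied identity down to $xy \bumpeq yx$. If anything, your descent step (cancelling $P_{i-1}Q_{i-1}$ on each side to obtain $P_{i-1}=Q_{i-1}$) unwinds the recursion more faithfully than the paper's displayed equation, which contains an indexing slip, and your handling of the $n=1$ base case is the cleaner one.
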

\begin{proof}
It suffices to prove, by Prop.~\ref{Prop:Cain_UT_id}, the stronger claim that if the identity $U_{n}(xy,yx) \bumpeq V_{n}(xy,yx)$ holds in a cancellative semigroup $S$ for some $n \geq 1$, then $xy \bumpeq yx$ holds in $S$. The proof is by induction on $n$. If $n=1$, then there is nothing to show. Suppose the claim holds for $n< n_0$, where $n_0 > 1$. We prove the claim for $n=n_0$. Let $U' = U_{n_0-1}(xy,yx)$ and $V' = V_{n_0-1}(xy,yx)$. As $U_n(xy, yx) = U_{n_0}(U'V', V'U')$ and $V_{n_0}(xy,yx) = V_1(U'V', V'U')$, we have that
\begin{equation}\label{Eq:UV-U'V'-Cain}
U'V'V'U' (U'V')U'V'V'U' \bumpeq U'V'V'U' (V'U') U'V'V'U'
\end{equation}
holds in $S$. As $S$ is cancellative, \eqref{Eq:UV-U'V'-Cain} implies that $U'V' \bumpeq V'U'$ in $S$. Hence, by the inductive hypothesis, $xy \bumpeq yx$ holds in $S$.
\end{proof}


\section{One-relation monoids in $\UTZ{n}$}\label{Sec:Upper-tropical}

\noindent In this section, we will completely describe the $\operatorname{UT}$-tropical one-relation monoids, i.e. those which embed into $\operatorname{UT}_n(\overline{\mathbb{Z}})$ for some $n \geq 1$, by proving the following:

\begin{theoremB}\label{Thm:Main_UT}
Let $M$ be a one-relation monoid. Then $M$ is $\operatorname{UT}$-tropical if and only if $M$ is free monogenic, aperiodic monogenic, the bicyclic monoid, or one of $M_1, M_5, M_6^{(k)}, M_8$, or $M_9^{(k)}$ for some $k \geq 1$.
\end{theoremB}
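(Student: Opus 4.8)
The theorem is a complete classification, so the proof naturally splits into two directions. The "if" direction requires exhibiting explicit embeddings into $\UTZ{n}$ for each monoid on the list (free monogenic, aperiodic monogenic, $\mathcal{B}$, $M_1$, $M_5$, $M_6^{(k)}$, $M_8$, $M_9^{(k)}$), and the "only if" direction requires ruling out every remaining one-relation monoid. My plan is to organize everything around Shneerson's classification (Theorem~\ref{Thm:Shneerson's}). Since any $\UT$-tropical monoid satisfies a non-trivial identity (as $\UTZ{n}$ does, by Prop.~\ref{Prop:Cain_UT_id}), and a monoid satisfying a non-trivial identity has no free subsemigroup of rank $2$, the candidates are \emph{exactly} the monoids appearing in Shneerson's list plus the monogenic and special cases. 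So the "only if" direction reduces to checking that every monoid on Shneerson's list which is \emph{not} on my target list fails to be $\UT$-tropical, while the "if" direction reduces to a finite list of constructions (modulo the parametrized families).

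\textbf{The "if" direction.} First I would record the easy embeddings: free monogenic $\mathbb{N} \hookrightarrow \UTZ{1}$ via $1 \mapsto (0)$, and for aperiodic monogenic $C_{\ell+1,\ell}$ I would build an $(\ell \times \ell)$ upper triangular matrix whose powers stabilize appropriately (a nilpotent-plus-shift construction). The bicyclic monoid is Izhakian--Margolis \cite{Izhakian2010}. For $M_1 = \langle a,b \mid ab=ba\rangle \cong \mathbb{N}^2$, a diagonal representation in $\UTZ{2}$ suffices. For $M_5$ and $M_8$ I would use Lemma~\ref{Lem:If-reverse-then-transpose} to treat only one of the anti-isomorphic pair (say $M_8$, since $M_5^{\trev}=M_8$), then find matrices in $\UTZ{4}$ satisfying $aba=ab$ and check injectivity; similarly for the families $M_6^{(k)}$ and $M_9^{(k)}$ (again anti-isomorphic, so one construction in $\UTZ{2}$ suffices, with transposes handling the other). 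For each family, the key step is verifying the map is a genuine embedding, not merely a homomorphism respecting the relation — this requires understanding normal forms in these monoids and checking the matrix semigroup separates them.

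\textbf{The "only if" direction and the main obstacle.} Here the central tool is Corollary~\ref{Cor:Canc.-subsems-of-UTZ-are-comm}: any cancellative subsemigroup of $\UTZ{n}$ is commutative. By the cancellativity remark following Theorem~\ref{Thm:Shneerson's}, the monoids (1)--(3), namely $M_1, M_2, M_3$, are cancellative, as is $\mathcal{K}$; of these only $M_1$ is commutative, so $M_2$, $M_3$, and $\mathcal{K}$ cannot be $\UT$-tropical — this immediately disposes of several rows of Table~\ref{Table: Group properties}. For the non-cancellative survivors $M_4$ and $M_7$ (which are only one-sided cancellative), I would show they contain a copy of $M_3$ (the table cites Lemma~\ref{Lem:M3-in-M4-and-M7} for exactly this), so since $\UT$-tropicality is hereditary and $M_3$ is already excluded, $M_4$ and $M_7$ are excluded too. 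Finally, non-aperiodic monogenic monoids $C_{k,\ell}$ with a nontrivial cyclic part cannot embed, since $\UTZ{n}$ is torsion-free in the relevant sense (the diagonal captures a copy of $(\mathbb{Z},+)$ and the nilpotent part cannot manufacture finite-order elements); this is the content of Lemma~\ref{Lem:C-k-ell-not-UT-tropical}. I expect the genuine difficulty to lie not in these exclusions but in the \emph{embeddings} for $M_5$, $M_8$ and the families $M_6^{(k)}, M_9^{(k)}$: producing matrices whose semigroup is isomorphic — rather than merely a quotient-respecting image — demands a careful injectivity argument, typically by reading off length/degree data from the tropical matrix entries (e.g. the $(1,n)$-entry growing linearly in word length) and matching it against the normal form of the abstract monoid. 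Controlling the parameter $k$ uniformly across the family is the subtlest part.
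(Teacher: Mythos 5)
Your proposal is correct and follows essentially the same route as the paper: reduce the ``only if'' direction to Shneerson's list via the identity satisfied by $\UTZ{n}$, kill $M_2$, $M_3$, and $\mathcal{K}$ with Corollary~\ref{Cor:Canc.-subsems-of-UTZ-are-comm}, kill $M_4$ and $M_7$ by embedding $M_3$ into them, exclude non-aperiodic monogenic monoids by the torsion argument, and handle the positive direction with explicit normal-form-based embeddings (in $\UTZ{2}$ for $M_6^{(k)}$, $M_9^{(k)}$ and in $\UTZ{4}$ for $M_5$, $M_8$) together with Lemma~\ref{Lem:If-reverse-then-transpose} for the anti-isomorphic duals. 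You have correctly identified where the real work lies, namely the injectivity checks against the normal forms, which is exactly how the paper carries them out.
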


We begin the proof immediately. We will first show the converse (``positive'') direction, by exhibiting embeddings of the claimed $\UT$-tropical one-relation monoids. 

\subsection{Positive results}

Obviously, $\mathbb{N}$ and $M_1 \cong \mathbb{N}^2$ are both $\UT$-tropical. It remains to show that any aperiodic monogenic one-relation monoid, $M_5, M_6^{(k)}, M_8$ and $M_9^{(k)}$ are all $\operatorname{UT}$-tropical (forming the ``positive'' results). For completeness, we also give a quick, streamlined proof that the bicyclic monoid is $\UT$-tropical.

\
\begin{center}
\textit{Aperiodic monogenic monoids are $\UT$-tropical}
\end{center}

\begin{lemma}\label{Lem:Aperiodic-monogenic-are-UT}
Let $C_{\ell+1,\ell} = \pres{Mon}{a}{a^{\ell+1} = a^\ell}$. Then $C_{\ell+1,\ell}$ is $\UT$-tropical.
\end{lemma}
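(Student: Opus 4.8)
The plan is to exploit the fact that $C_{\ell+1,\ell}$ is a finite monoid of a very special shape. Its elements are $1, a, a^2, \dots, a^\ell$, with $a^m = a^\ell$ for every $m \geq \ell$; thus $1$ is the identity and $a^\ell$ is a two-sided \emph{zero}, and the monoid is the chain $1, a, \dots, a^{\ell-1}, a^\ell$ with $a^\ell$ absorbing. The tropical analogue of a zero is the matrix $\mathbf{0}_{\ell}$, which is absorbing in $\MTZ{\ell}$, while the tropical analogue of a nilpotent operator is a strictly upper triangular matrix. So the idea is to represent $a$ by a single strictly upper triangular matrix whose powers march up the superdiagonals and collapse to $\mathbf{0}_{\ell}$ exactly at the $\ell$-th step.

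Concretely, first I would set $N \in \UTZ{\ell}$ to be the matrix with $N_{i,i+1} = 0$ for $1 \leq i \leq \ell-1$ and $N_{ij} = -\infty$ otherwise (for $\ell = 1$ this is simply $N = \mathbf{0}_{1}$). A short induction on $k$ shows that $N^k$ has entry $0$ in position $(i,\, i+k)$ for $1 \leq i \leq \ell-k$ and $-\infty$ everywhere else: this is immediate from $(N^k)_{ij} = \max_{t} \big( (N^{k-1})_{it} + N_{tj} \big)$ together with the fact that $N_{tj}$ is finite only when $j = t+1$. Consequently the matrices $N^0 = \mathbf{I}_\ell,\, N^1, \dots, N^{\ell-1}$ are pairwise distinct, since their (unique) finite diagonals sit at distinct offsets, whereas $N^\ell = N^{\ell+1} = \mathbf{0}_{\ell}$.

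Finally I would define $\varphi \colon C_{\ell+1,\ell} \to \UTZ{\ell}$ by $\varphi(a) = N$. Since $N^{\ell+1} = N^\ell$, the defining relation is respected, so $\varphi$ is a well-defined monoid homomorphism onto the submonoid $\{\mathbf{I}_\ell, N, \dots, N^{\ell-1}, \mathbf{0}_{\ell}\}$. This image has exactly $\ell+1$ elements, which equals $\lvert C_{\ell+1,\ell}\rvert$, so $\varphi$ is a bijection and hence an embedding, realising $C_{\ell+1,\ell}$ inside $\UTZ{\ell}$ and giving $\UT$-rank $\leq \ell$ as recorded in Table~\ref{Table: Group properties}. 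I expect no serious obstacle here: the only points needing care are the bookkeeping in the inductive computation of $N^k$ and the degenerate case $\ell = 1$, both routine. The conceptual content is simply matching the absorbing element $a^\ell$ with the tropical zero $\mathbf{0}_{\ell}$ and using the nilpotency of the superdiagonal matrix $N$.
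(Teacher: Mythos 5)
Your proposal is correct and follows essentially the same route as the paper: represent $a$ by a strictly upper triangular tropical matrix that is nilpotent of index exactly $\ell$ (so that $N^0,\dots,N^{\ell-1}$ are distinct and $N^\ell=N^{\ell+1}=\mathbf{0}_\ell$), the only difference being your choice of the superdiagonal matrix with entries $0$ where the paper uses the matrix with all entries above the diagonal equal to $1$. Both choices work and yield the same bound $\UT$-rank $\leq \ell$.
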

\begin{proof}
Let $A$ be the $(\ell \times \ell)$-matrix with all entries on and below the diagonal as $-\infty$, and all entries above the diagonal $1$. Then all powers of $A^i$ are distinct for $1 \leq i \leq \ell$, and $A^\ell = \mathbf{-\infty}_{\ell \times \ell} = A^{\ell+1}$. Hence $C_{\ell+1, \ell}$ is isomorphic to the submonoid of $\UTZ{\ell}$ generated by $A$ (note that $A^0 = I_{n \times n} \neq A^{\ell}$). 
\end{proof}

\
\begin{center}
\textit{The bicyclic monoid $\mathcal{B}$ is $\UT$-tropical}
\end{center}

\begin{lemma}[Izhakian \& Margolis]\label{Lem:bicyclic_embeds}
The bicyclic monoid $\mathcal{B}= \pres{Mon}{a,b}{ab=1}$ is isomorphic to the subsemigroup of $\UTZ{2}$ generated by the two matrices
\[
A = \begin{pmatrix}
0 & 1 \\
-\infty & 1 
\end{pmatrix} \qquad \textnormal{and} \qquad B = \begin{pmatrix}
0 & 0 \\
-\infty & -1 
\end{pmatrix}.
\]
\end{lemma}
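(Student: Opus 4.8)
The plan is to exploit the unique normal form of the bicyclic monoid together with a universal-property argument, reducing the claim to two short matrix computations. Recall that every element of $\mathcal{B} = \pres{Mon}{a,b}{ab=1}$ can be written uniquely as $b^i a^j$ with $i, j \geq 0$, the identity being $1 = b^0 a^0$. Write $S$ for the subsemigroup of $\UTZ{2}$ generated by $A$ and $B$. The first, and only mildly subtle, point is that $A \otimes B$ is \emph{not} the tropical identity $\mathbf{I}_2$; rather I would first compute $E := A \otimes B = \left(\begin{smallmatrix} 0 & 0 \\ -\infty & 0\end{smallmatrix}\right)$ and check directly that $E$ is idempotent and that $E \otimes A = A \otimes E = A$ and $E \otimes B = B \otimes E = B$. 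Since $S$ is generated by $A$ and $B$, these four identities suffice to show that $E$ is a two-sided identity of $S$, so that $S$ is in fact a monoid with identity element $E = A \otimes B$.

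With this in hand, the relation $ab = 1$ of $\mathcal{B}$ is respected by the assignment $a \mapsto A$, $b \mapsto B$, because $A \otimes B = E$ is the identity of $S$. By the universal property of $\mathcal{B}$ (as the monoid generated by $a, b$ subject to the single relation $ab = 1$), this assignment extends to a monoid homomorphism $\varphi \colon \mathcal{B} \to S$, which is surjective by construction. It therefore remains only to prove that $\varphi$ is injective.

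For injectivity I would compute the images of the normal-form elements explicitly. A straightforward induction gives $A^j = \left(\begin{smallmatrix} 0 & j \\ -\infty & j \end{smallmatrix}\right)$ and $B^i = \left(\begin{smallmatrix} 0 & 0 \\ -\infty & -i \end{smallmatrix}\right)$ for all $i, j \geq 0$, whence
\[
\varphi(b^i a^j) = B^i \otimes A^j = \begin{pmatrix} 0 & j \\ -\infty & j - i \end{pmatrix}.
\]
The $(1,2)$-entry of this matrix equals $j$ and its $(2,2)$-entry equals $j - i$, so the pair $(i,j)$ is recovered from $\varphi(b^i a^j)$; hence these matrices are pairwise distinct as $(i,j)$ ranges over $\mathbb{N}^2$. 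Since every element of $\mathcal{B}$ has a unique such normal form, $\varphi$ is injective, and therefore an isomorphism onto $S$.

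I do not expect any genuine obstacle here: the content is entirely computational. The one place where care is needed -- and where a naive expectation would fail -- is the observation that the product $A \otimes B$ plays the role of the identity of the \emph{semigroup} $S$ rather than being the tropical identity matrix; once this is correctly identified, the universal property and the normal-form computation finish the proof immediately.
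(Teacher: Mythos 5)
Your proof is correct and follows essentially the same route as the paper's: map $a \mapsto A$, $b \mapsto B$, compute $B^i \otimes A^j = \left(\begin{smallmatrix} 0 & j \\ -\infty & j-i \end{smallmatrix}\right)$, and recover $(i,j)$ from the matrix using the unique normal form $b^i a^j$. Your explicit check that $E = A \otimes B$ is an idempotent acting as a two-sided identity on $S$ is exactly the point the paper handles by setting $\psi(1) = AB$ and flagging in a subsequent remark that the identity does not map to $\mathbf{I}_2$.
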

\begin{proof}
Let $\psi \colon \mathcal{B} \to \UTZ{2}$ be the semigroup homomorphism defined by $a \mapsto A, b \mapsto B$ and $1 \mapsto AB$. It is easy to verify that for $i, j \geq 0$, we have 
\[
B^i A^j = \begin{pmatrix}
0 & j \\
-\infty & j-i 
\end{pmatrix}.
\] Hence the matrix $B^i A^j$ uniquely determines the pair $(i, j)$ of natural numbers; as the language $b^\ast a^\ast$ is a language of unique representatives for the elements of $\mathcal{B}$, it follows that $\psi$ is injective, and so an isomorphism from $\mathcal{B}$ onto $\langle A, B \rangle$. 
\end{proof}

\begin{remark}
In Lemma~\ref{Lem:bicyclic_embeds}, the identity element does not map to $I_2$, but rather to the idempotent in $\UTZ{2}$ with all entries above the diagonal equal to $0$. 
\end{remark}

\
\begin{center}
\textit{$M_6^{(k)}$ and $M_9^{(k)}$ are $\operatorname{UT}$-tropical for all $k \geq 1$}
\end{center}

\noindent We begin with a simple normal form lemma for $M_6^{(k)} = \pres{Mon}{a,b}{ab = b^k}$. By using the finite complete rewriting system with the single rule $ab \to b^k$, it is clear that in $M_6^{(k)}$ every word $w \in \{ a, b\}^\ast$ is equal to some word of the form $b^\alpha a^\beta$, where $\alpha, \beta \geq 0$, and that this representation, being irreducible, is unique. Furthermore, this yields the following rule for multiplication in $M_6^{(k)}$:

\begin{equation}\label{Eq:M6k_normal_form}
b^{\alpha_1}a^{\beta_1} \cdot b^{\alpha_2} a^{\beta_2} =
\begin{cases}
b^{\alpha_1 + \beta_1(k-1) + \alpha_2} a^{\beta_2}, & \text{if } \alpha_2 \neq 0\\
b^{\alpha_1} a^{\beta_1+\beta_2}, & \text{otherwise.}
\end{cases}
\end{equation}
This normal form was used already by \cite{Shneerson1972b}. The fact that the multiplication \eqref{Eq:M6k_normal_form} can be expressed as a linear function of the parameters $\alpha_1, \alpha_2, \beta_1, \beta_2$ is a good indicator that $M_6^{(k)}$ does indeed have a tropical representation; the fact that two parameters $\alpha, \beta$ are used to express the normal form suggests that it has a representation of rank $2$. We will now use the normal form to verify this, i.e. that a certain monoid homomorphism from $M_6^{(k)}$ into $\UTZ{2}$ is injective. The case $k=1$ is distinguished, and we treat this first; the case $k>1$ is uniform in $k$. 

\begin{lemma}\label{Lem:ab=bk_when_k=1}
The monoid $M_6^{(1)}= \pres{Mon}{a,b}{ab=b}$ is isomorphic to the submonoid of  $\UTZ{2}$ generated by the two matrices
\[
A = \begin{pmatrix}
0 & -\infty \\
-\infty & 1 
\end{pmatrix} \qquad \textnormal{and} \qquad B = \begin{pmatrix}
1 & 1 \\
-\infty & -\infty
\end{pmatrix}.
\]
\end{lemma}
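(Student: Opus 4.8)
The plan is to verify directly that the assignment $a \mapsto A$, $b \mapsto B$ defines an injective monoid homomorphism $\psi \colon M_6^{(1)} \to \UTZ{2}$ whose image is the submonoid generated by $A$ and $B$ (with $\psi(1) = \mathbf{I}_2$). Since $M_6^{(1)}$ is defined by the single relation $ab = b$, the map $\psi$ extends to a well-defined homomorphism as soon as one checks that the images satisfy this relation, i.e. that $AB = B$ in $\UTZ{2}$; this is a one-line tropical computation, giving $(AB)_{11} = (AB)_{12} = 1$ and $(AB)_{21} = (AB)_{22} = -\infty$, which is exactly $B$. The entire content of the lemma therefore lies in injectivity.

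For injectivity I would use the normal form recorded in \eqref{Eq:M6k_normal_form}: with $k = 1$, every element of $M_6^{(1)}$ equals a unique word $b^\alpha a^\beta$ with $\alpha, \beta \geq 0$. Thus it suffices to compute $\psi(b^\alpha a^\beta) = B^\alpha A^\beta$ in closed form and show that distinct pairs $(\alpha, \beta)$ give distinct matrices. Here $A$ is tropically diagonal with diagonal entries $0$ and $1$, so $A^\beta$ is diagonal with entries $0$ and $\beta$; and a short induction gives $B^\alpha = \left(\begin{smallmatrix} \alpha & \alpha \\ -\infty & -\infty \end{smallmatrix}\right)$ for every $\alpha \geq 1$. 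Multiplying, one obtains
\[
B^\alpha A^\beta = \begin{pmatrix} \alpha & \alpha + \beta \\ -\infty & -\infty \end{pmatrix} \quad (\alpha \geq 1), \qquad B^0 A^\beta = A^\beta = \begin{pmatrix} 0 & -\infty \\ -\infty & \beta \end{pmatrix}.
\]

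Distinctness is then immediate by reading the pair $(\alpha, \beta)$ back off the matrix: for $\alpha \geq 1$ one recovers $\alpha$ as the $(1,1)$-entry and $\beta$ as the difference of the $(1,2)$- and $(1,1)$-entries, while for $\alpha = 0$ one recovers $\beta$ as the $(2,2)$-entry. The two families cannot collide, since the $(2,2)$-entry is $-\infty$ precisely when $\alpha \geq 1$. Hence $\psi$ is injective and an isomorphism onto $\langle A, B\rangle$. I do not anticipate any genuine obstacle here: the computation is routine, and the only point requiring care is the degenerate case $\alpha = 0$, where the bottom row of $B^\alpha$ does not collapse to $-\infty$ (since $B^0 = \mathbf{I}_2$) and so must be treated separately from the generic case $\alpha \geq 1$.
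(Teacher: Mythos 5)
Your proposal is correct and follows the same route as the paper's proof: verify $AB=B$, invoke the normal form $b^\alpha a^\beta$ from \eqref{Eq:M6k_normal_form}, compute $B^\alpha A^\beta$ in closed form (splitting off the degenerate case $\alpha=0$), and recover $(\alpha,\beta)$ from the matrix entries. All computations check out, and your explicit note that the $(2,2)$-entry distinguishes the $\alpha=0$ family from the $\alpha\geq 1$ family is exactly the point the paper leaves implicit in the phrase ``one easily determines a unique pair.''
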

\begin{proof}
Let $\psi \colon M_6^{(1)} \to \UTZ{2}$ be the monoid homomorphism defined by $a \mapsto A$ and $b \mapsto B$. Multiplying, one finds $AB=B$, so $\psi$ is indeed a monoid homomorphism. One can quickly verify that for all $\alpha > 0, \beta \geq 0$, we have 
\[
A^\beta = \begin{pmatrix}
0 & -\infty \\
-\infty & \beta 
\end{pmatrix} \quad \textnormal{and} \quad B^\alpha = \begin{pmatrix}
\alpha & \alpha \\
-\infty & -\infty
\end{pmatrix}.
\]
By a straightforward multiplication, we find
\begin{equation}\label{Eq:M6_matrices_mult}
B^{\alpha} A^{\beta} =
 \begin{cases}
\begin{pmatrix}
\alpha & \alpha+\beta \\
-\infty & -\infty
\end{pmatrix}  & \text{if } \alpha \neq 0,\\
\begin{pmatrix}
0 & -\infty \\
-\infty & \beta
\end{pmatrix} & \text{otherwise.}
\end{cases}
\end{equation}
Given any matrix of either the first or second form of the right-hand side of \eqref{Eq:M6_matrices_mult}, one easily determines a unique pair $(\alpha, \beta)$ of natural numbers that gave rise to it, so $\psi$ is injective.
\end{proof}

\begin{lemma}\label{Lem:ab=bk_bigger_k}
The monoid $M_6^{(k)}= \pres{Mon}{a,b}{ab=b^k}$, where $k>1$, is isomorphic to the subsemigroup of  $\UTZ{2}$ generated by the two matrices
\[
A = \begin{pmatrix}
k-1 & -\infty \\
-\infty & 0 
\end{pmatrix} \qquad \textnormal{and} \qquad B = \begin{pmatrix}
1 & 1 \\
-\infty & -\infty
\end{pmatrix}.
\]
\end{lemma}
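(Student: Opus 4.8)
The plan is to follow the template of Lemma~\ref{Lem:ab=bk_when_k=1} verbatim, exploiting the normal form $b^\alpha a^\beta$ (with $\alpha,\beta \geq 0$) for $M_6^{(k)}$ coming from the rewriting rule $ab \to b^k$ recorded above. First I would check that $\psi \colon a \mapsto A,\ b \mapsto B$ respects the defining relation, i.e. that $AB = B^k$, so that $\psi$ is a well-defined homomorphism. A direct tropical multiplication gives $AB = \left(\begin{smallmatrix} k & k \\ -\infty & -\infty \end{smallmatrix}\right)$, while an immediate induction shows $B^\alpha = \left(\begin{smallmatrix} \alpha & \alpha \\ -\infty & -\infty \end{smallmatrix}\right)$ for all $\alpha \geq 1$; in particular $B^k = \left(\begin{smallmatrix} k & k \\ -\infty & -\infty \end{smallmatrix}\right) = AB$, as required. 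As in the $k=1$ case, sending $1 \mapsto \mathbf{I}_2$, it then remains only to establish injectivity.

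Next I would record the powers of $A$ and the images of the normal-form words. One has $A^\beta = \left(\begin{smallmatrix} (k-1)\beta & -\infty \\ -\infty & 0 \end{smallmatrix}\right)$ for $\beta \geq 0$, and then a short computation yields
\[
B^\alpha A^\beta = \begin{cases}
\begin{pmatrix} \alpha + (k-1)\beta & \alpha \\ -\infty & -\infty \end{pmatrix}, & \text{if } \alpha \geq 1, \\
\begin{pmatrix} (k-1)\beta & -\infty \\ -\infty & 0 \end{pmatrix}, & \text{if } \alpha = 0.
\end{cases}
\]
The substance of the lemma is then that the assignment $(\alpha,\beta) \mapsto B^\alpha A^\beta$ is injective on the set of normal forms, which is exactly what witnesses that $\psi$ is injective (and hence an isomorphism onto the generated subsemigroup).

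Finally, for injectivity I would recover $(\alpha,\beta)$ from a given image matrix $M$. The two branches are distinguished by the $(2,2)$-entry, which equals $-\infty$ precisely when $\alpha \geq 1$: in that branch $\alpha$ is read directly off $M_{12}$, after which $\beta = (M_{11} - M_{12})/(k-1)$; in the remaining branch $\alpha = 0$ and $\beta = M_{11}/(k-1)$. The step where the hypothesis $k > 1$ is genuinely used — and the one I expect to be the only real obstacle — is precisely this recovery of $\beta$, since it requires dividing by $k-1$. I must therefore verify that $k - 1 \geq 1$ and that the relevant entry difference is always a nonnegative integer multiple of $k-1$, so that a unique $\beta \geq 0$ is determined. (For $k = 1$ this division degenerates, which is exactly why that case is treated separately in Lemma~\ref{Lem:ab=bk_when_k=1}.)
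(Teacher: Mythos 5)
Your proposal is correct and follows essentially the same route as the paper's own proof: verify $AB = B^k$, compute $B^\alpha A^\beta$ in the two cases $\alpha \geq 1$ and $\alpha = 0$, and recover $(\alpha,\beta)$ uniquely from the resulting matrix. The only difference is that you spell out the recovery of $\beta$ via division by $k-1$ explicitly, where the paper leaves it as ``one easily determines a unique pair''; your observation that this is exactly where $k>1$ is used is accurate.
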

\begin{proof}
Let $k>1$, and let $\psi \colon M_6^{(k)} \to \UTZ{2}$ be the map defined by $a \mapsto A$ and $b \mapsto B$. One can quickly verify that for all $\alpha > 0, \beta \geq 0$, we have 
\[
A^\beta = \begin{pmatrix}
(k-1)\beta & -\infty \\
-\infty & 0 
\end{pmatrix} \quad \textnormal{and} \quad B^\alpha = \begin{pmatrix}
\alpha & \alpha \\
-\infty & -\infty
\end{pmatrix}.
\]
Multiplying, one finds 
\[
AB=\begin{pmatrix}
k & k \\
-\infty & -\infty 
\end{pmatrix} = B^k
\]
so $\psi$ is indeed a monoid homomorphism. By a straightforward multiplication, we find
\begin{equation}\label{Eq:M6_k_bigger_matrices_mult}
B^{\alpha} A^{\beta} =
 \begin{cases}
\begin{pmatrix}
\alpha + (k-1)\beta & \alpha \\
-\infty & -\infty
\end{pmatrix}  & \text{if } \alpha \neq 0,\\
\begin{pmatrix}
(k-1)\beta & -\infty \\
-\infty & 0
\end{pmatrix}, & \text{otherwise.}
\end{cases}
\end{equation}
Given any matrix of either the first or second form of the right-hand side of \eqref{Eq:M6_k_bigger_matrices_mult}, one easily determines a unique pair $(\alpha, \beta)$ of natural numbers that gave rise to it, so $\psi$ is injective.
\end{proof}

Using the representations in Lemma~\ref{Lem:ab=bk_when_k=1} and Lemma~\ref{Lem:ab=bk_bigger_k}, one can now verify \eqref{Eq:M6k_normal_form} using only tropical matrix multiplication. The dual right cancellative cases of $M_9^{(k)}$ are hence also $\operatorname{UT}$-tropical, by transposing the generators and using Lemma~\ref{Lem:If-reverse-then-transpose}.

\begin{lemma}\label{Lem:ba=bk_when_k=1}
The monoid $M_9^{(1)}= \pres{Mon}{a,b}{ba=b}$ is isomorphic to the submonoid of  $\UTZ{2}$ generated by the two matrices
\[
A = \begin{pmatrix}
1 & -\infty \\
-\infty & 0 
\end{pmatrix} \qquad \textnormal{and} \qquad B = \begin{pmatrix}
-\infty & 1 \\
-\infty & 1 \\
\end{pmatrix}.
\]
\end{lemma}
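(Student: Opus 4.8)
The plan is to prove that $M_9^{(1)} = \pres{Mon}{a,b}{ba=b}$ embeds in $\UTZ{2}$ via the stated matrices, following the exact template already used for $M_6^{(1)}$ in Lemma~\ref{Lem:ab=bk_when_k=1}. In fact, the cleanest route is to observe that this is the \emph{reversed} monoid of $M_6^{(1)}$: reversing the relation $ab=b$ gives $ba=b$, so $M_9^{(1)} \cong (M_6^{(1)})^{\trev}$. By Lemma~\ref{Lem:If-reverse-then-transpose}, transposing the generators $A, B$ from Lemma~\ref{Lem:ab=bk_when_k=1} yields an embedding of $M_9^{(1)}$. I would first verify that the matrices in the present statement are indeed (up to the irrelevant labelling of which generator is which) the transposes of those in Lemma~\ref{Lem:ab=bk_when_k=1}, and then simply invoke Lemma~\ref{Lem:If-reverse-then-transpose}. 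This would be the shortest proof and is presumably how the author intends later $M_9$ cases to be handled; but since the statement here is phrased as a standalone direct isomorphism claim, I will also sketch the self-contained computation.

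For the direct approach, I would first establish a normal form. Using the complete rewriting system with the single rule $ba \to b$, every word in $\{a,b\}^\ast$ reduces to a unique irreducible word of the form $a^\beta b^\alpha$ with $\alpha, \beta \geq 0$ (note the order is reversed relative to $M_6^{(1)}$, reflecting the right-cancellative nature of this monoid). I would record the resulting multiplication rule, the dual of \eqref{Eq:M6k_normal_form}. Next I would define $\psi\colon M_9^{(1)} \to \UTZ{2}$ by $a \mapsto A$, $b \mapsto B$, and check that the relation is respected: one computes $BA = B$ by a single tropical matrix multiplication, confirming $\psi$ is a well-defined monoid homomorphism.

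The crux is injectivity. Following the Lemma~\ref{Lem:ab=bk_when_k=1} pattern, I would compute the closed forms of the powers $A^\beta$ and $B^\alpha$ (each is immediate since $A$ is diagonal and $B$ is idempotent-like with a repeated row), and then compute the product $A^\beta B^\alpha$ explicitly, splitting into the cases $\alpha \neq 0$ and $\alpha = 0$. The key observation is that the resulting matrix, in each case, determines the pair $(\alpha, \beta)$ uniquely: the entries encode $\alpha$ and $\beta$ as distinct linear combinations, and the two cases are distinguishable by which entries equal $-\infty$. Since $a^\beta b^\alpha$ is a language of unique representatives for $M_9^{(1)}$, injectivity of $\psi$ on these normal forms gives injectivity of $\psi$, hence the claimed isomorphism onto $\langle A, B\rangle$.

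The main (and only) obstacle is purely bookkeeping: ensuring the closed forms for $A^\beta B^\alpha$ are correct and that the uniqueness argument correctly separates the $\alpha = 0$ boundary case from the generic $\alpha > 0$ case. There is no conceptual difficulty here—everything is a finite tropical calculation parallel to an already-verified lemma—so I would expect the proof to be short and mechanical, and I would lean on the transpose argument via Lemma~\ref{Lem:If-reverse-then-transpose} to avoid redoing the work from scratch if a fully rigorous but concise writeup is desired.
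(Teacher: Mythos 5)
Your proposal matches the paper exactly: the paper gives no separate proof for this lemma, deriving it from Lemma~\ref{Lem:ab=bk_when_k=1} by transposing the generators and invoking Lemma~\ref{Lem:If-reverse-then-transpose} (your direct $A^\beta B^\alpha$ computation is a correct, if redundant, backup). One small point your verification step would uncover: the stated matrices are not literally the transposes of those in Lemma~\ref{Lem:ab=bk_when_k=1} up to relabelling of generators --- the transpose of that $B$ is lower triangular --- but rather those transposes conjugated by the order-reversing permutation of indices, which is harmless since that conjugation is an automorphism of $\MTZ{2}$ carrying lower triangular matrices back into $\UTZ{2}$.
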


\begin{lemma}\label{Lem:ba=bk_bigger_k}
The monoid $M_9^{(k)}= \pres{Mon}{a,b}{ba=b^k}$, where $k>1$, is isomorphic to the subsemigroup of  $\UTZ{2}$ generated by the two matrices
\[
A = \begin{pmatrix}
0 & -\infty \\
-\infty & k-1
\end{pmatrix} \qquad \textnormal{and} \qquad B = \begin{pmatrix}
-\infty & 1 \\
-\infty & 1
\end{pmatrix}.
\]
\end{lemma}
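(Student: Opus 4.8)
The plan is to obtain this embedding from the one already constructed for $M_6^{(k)}$ in Lemma~\ref{Lem:ab=bk_bigger_k}, using the reversal machinery. Reversing the defining relation $ab = b^k$ of $M_6^{(k)}$ produces exactly $ba = b^k$, so $M_9^{(k)} = (M_6^{(k)})^{\trev}$. Writing
\[
A_6 = \begin{pmatrix} k-1 & -\infty \\ -\infty & 0 \end{pmatrix}, \qquad B_6 = \begin{pmatrix} 1 & 1 \\ -\infty & -\infty \end{pmatrix}
\]
for the two generators of Lemma~\ref{Lem:ab=bk_bigger_k}, Lemma~\ref{Lem:If-reverse-then-transpose} immediately gives that $M_9^{(k)}$ is isomorphic to the subsemigroup of $\MTZ{2}$ generated by the transposes $A_6^\top$ and $B_6^\top$.

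The one point that genuinely needs care — and the only step I would call an obstacle, albeit a mild one — is that transposition sends upper triangular matrices to lower triangular ones, so $A_6^\top$ and $B_6^\top$ lie in $\MTZ{2}$ but not a priori in $\UTZ{2}$; this is precisely the gap between $\MT$-tropicality and $\UT$-tropicality. To close it I would conjugate by the anti-diagonal permutation matrix
\[
P = \begin{pmatrix} -\infty & 0 \\ 0 & -\infty \end{pmatrix},
\]
which satisfies $P \otimes P = \mathbf{I}_2$, so that $X \mapsto P \otimes X \otimes P$ is a semigroup automorphism of $\MTZ{2}$ carrying lower triangular matrices onto upper triangular ones. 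A one-line computation shows that for a $2 \times 2$ matrix the composite operation $X \mapsto P \otimes X^\top \otimes P$ simply interchanges the two diagonal entries while fixing the off-diagonal entries; applying it to $A_6$ and $B_6$ returns precisely the matrices $A$ and $B$ of the statement. Since both the isomorphism of Lemma~\ref{Lem:If-reverse-then-transpose} and conjugation by the invertible $P$ preserve isomorphism type, this establishes $M_9^{(k)} \cong \langle A, B \rangle \leq \UTZ{2}$.

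For a fully self-contained alternative, I would instead verify the embedding directly, mirroring the proof of Lemma~\ref{Lem:ab=bk_bigger_k}. Define $\psi \colon M_9^{(k)} \to \UTZ{2}$ by $a \mapsto A$, $b \mapsto B$; checking $BA = B^k$ confirms $\psi$ is a homomorphism. The single rewriting rule $ba \to b^k$ strictly decreases the number of occurrences of $a$ and has no self-overlaps of its left-hand side, hence is a complete rewriting system with normal forms $a^\beta b^\alpha$. Computing the powers $A^\beta$ and $B^\alpha$ and the product
\[
A^\beta B^\alpha = \begin{pmatrix} -\infty & \alpha \\ -\infty & (k-1)\beta + \alpha \end{pmatrix} \quad (\alpha \geq 1),
\]
together with $A^\beta B^0 = A^\beta$, one recovers $\alpha$ from the $(1,2)$ entry and then $(k-1)\beta$, hence $\beta$ (using $k > 1$), from the $(2,2)$ entry, the two cases being separated by the $(1,1)$ entry. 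This shows $\psi$ is injective, so it is an isomorphism onto $\langle A, B \rangle$. I expect the transpose-and-conjugate route to be the cleaner write-up, with the triangularity issue above being the only genuinely substantive observation.
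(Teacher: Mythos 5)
Your proposal is correct and takes essentially the same route as the paper, which states this lemma without proof and justifies it only by the remark that one transposes the generators of Lemma~\ref{Lem:ab=bk_bigger_k} and applies Lemma~\ref{Lem:If-reverse-then-transpose}. Your additional observation that the bare transposes are lower triangular and must be conjugated by the anti-diagonal permutation matrix to land in $\UTZ{2}$ (recovering exactly the stated $A$ and $B$) correctly fills in the one detail the paper glosses over, and your direct verification via the normal forms $a^\beta b^\alpha$ is also accurate.
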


This completes the proofs that $M_6^{(k)}$ and $M_9^{(k)}$ are $\operatorname{UT}$-tropical for all $k \geq 1$.

\
\begin{center}
\textit{$M_5$ and $M_8$ are $\operatorname{UT}$-tropical}
\end{center}

\noindent We will require a normal form lemma for $M_5$, which is fairly simple to prove directly, but requires some care. The following is essentially observed by Shneerson \cite{Shneerson1972b} (who denotes $M_5$ as $\Pi^{(3)}$), although his multiplication table omits one case. 

\begin{lemma}[Shneerson]\label{Lem:Shneerson-M5_aba=ba}
Let $M_5 = \pres{Mon}{a,b}{aba=ba}$. Then every word is equal in $M_5$ to some unique word of the form $(ba)^\alpha a^\beta b^\gamma$, with $\alpha, \beta, \gamma \geq 0$, and multiplication in $M_5$ is given by the rule 
\[
(ba)^{\alpha_1}a^{\beta_1} b^{\gamma_1} \cdot (ba)^{\alpha_2} a^{\beta_2} b^{\gamma_2}=
\begin{cases}
(ba)^{\alpha_1 + \gamma_1 + \alpha_2} a^{\beta_2} b^{\gamma_2}, & \text{if } \alpha_2 \neq 0\\
(ba)^{\alpha_1 + \gamma_1} a^{\beta_2-1} b^{\gamma_2}, & \text{if } \alpha_2 = 0, \beta_2 \neq 0, \gamma_1 \neq 0,\\
(ba)^{\alpha_1} a^{\beta_1+\beta_2} b^{\gamma_2}, & \text{if } \alpha_2 = 0, \beta_2 \neq 0, \gamma_1 = 0, \\
(ba)^{\alpha_1} a^{\beta_1} b^{\gamma_1 + \gamma_2}, & \text{if } \alpha_2 = 0, \beta_2 = 0.
\end{cases}
\]
\end{lemma}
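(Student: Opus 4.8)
The plan is to reduce everything to two structural identities in $M_5$ and then settle uniqueness by a transformation-monoid device, rather than by a confluent length-reducing rewriting system -- which is unavailable here, since orienting the relation as $aba \to ba$ is not confluent and the claimed normal form of a word may actually be \emph{longer} than the word itself (for instance $bba = baba = (ba)^2$). First I would record the identity $(ba)^n = b^n a$ for all $n \geq 1$. This follows by induction: $(ba)^n = (ba)^{n-1}(ba) = (b^{n-1}a)(ba) = b^{n-1}(aba) = b^{n-1}(ba) = b^n a$, where the defining relation $aba = ba$ is used at the penultimate step. Secondly I would record the left-absorption identity $a \cdot (ba\,w) = ba\,w$ for any word $w$ (immediate from $aba=ba$), and iterate it to get $a^m (ba)^n = (ba)^n$ for all $m \geq 0$, $n \geq 1$.

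For existence of the normal form I would show that the set $N$ of words of shape $(ba)^\alpha a^\beta b^\gamma$ is closed, up to equality in $M_5$, under right multiplication by the generators. Right multiplication by $b$ is immediate, giving $(ba)^\alpha a^\beta b^{\gamma+1}$. For right multiplication by $a$: if $\gamma = 0$ the word already ends in its $a$-block, giving $(ba)^\alpha a^{\beta+1}$; if $\gamma \geq 1$ I would rewrite $b^\gamma a = (ba)^\gamma$ by the first identity and then absorb the intervening $a^\beta$ using the second, obtaining $(ba)^\alpha a^\beta b^\gamma a = (ba)^\alpha a^\beta (ba)^\gamma = (ba)^\alpha (ba)^\gamma = (ba)^{\alpha+\gamma}$. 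Since the empty word lies in $N$ and $N$ is closed under appending generators, every word of $\{a,b\}^\ast$ equals some element of $N$. The four-case multiplication rule I would then derive by the same two identities applied to the generic product $(ba)^{\alpha_1}a^{\beta_1}b^{\gamma_1}\cdot(ba)^{\alpha_2}a^{\beta_2}b^{\gamma_2}$, splitting on whether the second factor begins with $ba$ (i.e. $\alpha_2 \neq 0$), whether it begins with an $a$-block ($\alpha_2 = 0,\ \beta_2 \neq 0$), and whether $\gamma_1$ vanishes. In each case the middle block collapses via $b^n a = (ba)^n$ followed by left-absorption, reproducing exactly the four right-hand sides in the statement; this step is routine but is where care is needed.

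The part requiring a genuine idea is uniqueness, and here I would avoid verifying associativity of the proposed multiplication on $\mathbb{N}^3$ directly. Instead I would define two transformations of the set $\mathbb{N}^3$ mimicking right multiplication by the generators: $\rho_b(\alpha,\beta,\gamma) = (\alpha,\beta,\gamma+1)$, and $\rho_a(\alpha,\beta,\gamma) = (\alpha,\beta+1,0)$ when $\gamma = 0$ while $\rho_a(\alpha,\beta,\gamma) = (\alpha+\gamma,0,0)$ when $\gamma \geq 1$. I would then consider the monoid homomorphism $\Phi$ from $\{a,b\}^\ast$ into the (automatically associative) full transformation monoid of $\mathbb{N}^3$ sending $a \mapsto \rho_a$ and $b \mapsto \rho_b$. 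A direct check of the two cases $\gamma = 0$ and $\gamma \geq 1$ shows $\Phi(aba) = \Phi(ba)$ as self-maps of $\mathbb{N}^3$, so $\Phi$ factors through $M_5$. Finally, evaluating at the point $(0,0,0)$ and applying the letters of $(ba)^\alpha a^\beta b^\gamma$ in order yields $\Phi\bigl((ba)^\alpha a^\beta b^\gamma\bigr)(0,0,0) = (\alpha,\beta,\gamma)$ (the $(ba)$-blocks build up the first coordinate, the $a$-blocks the second, the $b$-blocks the third).

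Consequently, two normal-form words equal in $M_5$ have equal images under $\Phi$, hence equal values at $(0,0,0)$, hence determine the same triple $(\alpha,\beta,\gamma)$; together with the existence argument this gives the unique normal form and, via the collapse computations, the stated multiplication table. The main obstacle throughout is exactly that the normal form is not produced by a confluent length-reducing system, so both existence and uniqueness must rest on the two structural identities; the transformation-monoid trick is what lets uniqueness be established without a laborious direct associativity check on $\mathbb{N}^3$.
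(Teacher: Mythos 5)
Your proof is correct and complete. The paper itself gives no proof of this lemma: it is attributed to Shneerson and stated without argument (with a remark that Shneerson's own multiplication table omits one case), so there is no in-paper proof to compare against. Your two structural identities $(ba)^n = b^n a$ and $a\,(ba)^n = (ba)^n$ do yield both the existence of the normal form and all four cases of the multiplication rule exactly as stated (I checked each case), and your uniqueness argument is the standard van der Waerden--style trick: the maps $\rho_a,\rho_b$ on $\mathbb{N}^3$ satisfy $\rho_a\rho_b\rho_a = \rho_b\rho_a$ as a right action in both the $\gamma=0$ and $\gamma\geq 1$ cases, so the action factors through $M_5$, and evaluating at $(0,0,0)$ recovers the triple $(\alpha,\beta,\gamma)$ from any normal-form word. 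This neatly sidesteps both the non-confluence of the naive orientation $aba\to ba$ (your example $bba = baba$ showing the normal form can be longer is apt) and a direct associativity check on $\mathbb{N}^3$. The one point worth making explicit in a write-up is the order-of-composition convention for $\Phi$, since you are using a right action (letters applied left to right); as long as that is stated, the argument is airtight.
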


We shall now use this normal form to give a tropical representation of $M_5$. 

\begin{lemma}\label{Lem:aba=ba(M5)}
The monoid $M_5 = \pres{Mon}{a,b}{aba=ba}$, is isomorphic to the subsemigroup of  $\UTZ{4}$ generated by the two matrices
\[
A = \begin{pmatrix}
0 & 1 & -\infty & 0 \\
-\infty &  1 & 0 & -1 \\
-\infty & -\infty & -\infty & -\infty \\
-\infty & -\infty & -\infty & -\infty \\
\end{pmatrix} \qquad \textnormal{and} \qquad B = \begin{pmatrix}
1 & 0 & 0 & -1 \\
-\infty & -\infty & -\infty & 0 \\
-\infty & -\infty & 0 & -\infty \\
-\infty & -\infty & -\infty & 1 \\
\end{pmatrix}.
\]
\end{lemma}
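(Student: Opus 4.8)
The plan is to mimic the strategy already used for $M_6^{(k)}$ and $M_9^{(k)}$: define the monoid homomorphism $\psi\colon M_5 \to \UTZ{4}$ by $a \mapsto A$ and $b \mapsto B$, and then use the unique normal form from Lemma~\ref{Lem:Shneerson-M5_aba=ba} to prove that $\psi$ is injective. First I would check that $\psi$ is well defined, which amounts to verifying the single tropical identity $ABA = BA$ by a direct $4 \times 4$ max-plus computation. One notes immediately that, because rows $2$--$4$ of $A$ are entirely $-\infty$, the product $BA$ (and hence every power $(BA)^\alpha$ with $\alpha \geq 1$) is supported only on its first row; since left-multiplication by $A$ fixes any matrix supported on row~$1$ (as $A_{11}=0$ and $A_{i1}=-\infty$ for $i \geq 2$), the relation $ABA = A(BA) = BA$ falls out at once.

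Next I would compute the image of a general normal-form word. By Lemma~\ref{Lem:Shneerson-M5_aba=ba} every element of $M_5$ is uniquely $(ba)^\alpha a^\beta b^\gamma$, so I must evaluate $\psi\big((ba)^\alpha a^\beta b^\gamma\big) = (BA)^\alpha A^\beta B^\gamma$. I would first establish closed forms for the three families of powers $(BA)^\alpha$, $A^\beta$ and $B^\gamma$ by a short induction, each surviving entry being an affine function of the exponent; here one must single out the small exponents, since the generic formulas only stabilise for $\beta \geq 2$ and $\gamma \geq 1$ (and $\alpha=0$, $\gamma=0$ give the identity). I would then multiply the three matrices together, reading off each entry of the product as a tropical, i.e.\ max-of-affine, expression in $\alpha, \beta, \gamma$, again splitting according to which parameters vanish; this case split mirrors the four cases of the multiplication rule in Lemma~\ref{Lem:Shneerson-M5_aba=ba}.

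The final step is injectivity: one must show the product matrix determines the triple $(\alpha, \beta, \gamma)$, whence, by uniqueness of normal forms, $\psi$ is injective and thus an isomorphism of $M_5$ onto $\langle A, B\rangle$. This is where I expect the main obstacle to lie. Because the image collapses onto a single nonzero row as soon as $\alpha \geq 1$, only a handful of entries carry information, and these involve maxima such as $\max(\alpha+\gamma-1,\, \alpha+\beta)$; the real work is to confirm that, across every degenerate case, the surviving entries genuinely separate all distinct triples rather than merely recording combinations like $\alpha+\gamma$ or $\alpha+\beta+\gamma$. Pinning down which entries recover $\alpha$, $\beta$ and $\gamma$ individually — and verifying that no two distinct normal forms collide — is the crux of the argument; the relation-checking and the power computations are routine by comparison.
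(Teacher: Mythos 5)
Your strategy coincides with the paper's: check $ABA=BA$ so that $a\mapsto A$, $b\mapsto B$ extends to a monoid homomorphism $\psi$, then invoke the normal form of Lemma~\ref{Lem:Shneerson-M5_aba=ba} to reduce injectivity of $\psi$ to injectivity of $(\alpha,\beta,\gamma)\mapsto(BA)^\alpha A^\beta B^\gamma$. One small slip first: rows $2$--$4$ of $A$ are \emph{not} all $-\infty$ (row $2$ of $A$ is $(-\infty,1,0,-1)$); only rows $3$ and $4$ are. Your conclusion that $BA$ is supported on its first row is still correct, but for a slightly different reason: rows $2$--$4$ of $B$ are supported only in columns $3$ and $4$, and rows $3$ and $4$ of $A$ are entirely $-\infty$. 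Combined with $A_{11}=0$ and $A_{i1}=-\infty$ for $i\geq 2$, this does give $A(BA)=BA$ as you intend, so the well-definedness argument is repairable.

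The substantive issue is that the injectivity of $(\alpha,\beta,\gamma)\mapsto(BA)^\alpha A^\beta B^\gamma$ --- which you correctly single out as the crux --- is never actually carried out: you describe what would have to be confirmed (``the real work is to confirm\dots'') without confirming it, so as written this is an outline rather than a proof. The paper resolves exactly this point as follows. In the generic regime $\alpha>0$, $\beta>1$, $\gamma>0$ the product is supported on its first row and each surviving entry collapses to a \emph{single} affine form in $(\alpha,\beta,\gamma)$ (no genuine maxima survive, contrary to your expectation of expressions like $\max(\alpha+\gamma-1,\alpha+\beta)$): the four entries are $-\alpha-\gamma$, $-\alpha-\gamma+1$, $-\alpha+\beta+1$, $-\alpha+\beta+\gamma+1$, and the first, third and fourth of these form a linear system whose coefficient matrix has (classical) determinant $-1$, so the triple is recovered uniquely. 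The degenerate cases ($\alpha=0$, $\beta\leq 1$, or $\gamma=0$) are then distinguished by which entries are finite; for instance when $\alpha=0$ one has $X_{3,4}=\beta-1$, and $X_{2,4}$ is $\beta$ or $-\infty$ according as $\gamma=0$ or $\gamma>0$. Some explicit verification of this kind is indispensable, since the whole content of the lemma is that these particular matrices separate the normal forms.
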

\begin{proof}
One may verify that 
\[
ABA = \begin{pmatrix}
-1 & 1 & 0 & -1 \\
-\infty & -\infty & -\infty & 0 \\
-\infty & -\infty & -\infty & -\infty \\
-\infty & -\infty & -\infty & -\infty \\
\end{pmatrix} = BA,
\]
and therefore the map $\psi \colon M_5 \to \UTZ{4}$ defined by $a \mapsto A$ and $b \mapsto B$ extends to a monoid homomorphism. To show that $\psi$ is injective, by Lemma~\ref{Lem:Shneerson-M5_aba=ba}, it suffices to show that the map $(\alpha, \beta, \gamma) \mapsto (BA)^\alpha A^\beta B^\gamma$ is injective. 

If $\alpha > 0, \beta > 1$, and $\gamma > 0$, then a straightforward multiplication shows that
\[
(BA)^\alpha A^\beta B^\gamma = \begin{pmatrix}
-\alpha-\gamma & -\alpha-\gamma+1 & -\alpha+\beta+1 & -\alpha+\beta+\gamma+1 \\
-\infty & -\infty & -\infty & -\infty \\
-\infty & -\infty & -\infty & -\infty \\
-\infty & -\infty & -\infty & -\infty \\
\end{pmatrix}.
\]
The first and second column of this matrix are linearly dependent; on the other hand, taking the first, third, and fourth columns of the matrix, and observing that 
\[
\det \begin{pmatrix}
-1 &0 &-1 \\
-1 &1 &0 \\
-1 &1 &1 \\
\end{pmatrix} = -1 \neq 0,
\]
we see that these columns determine $\alpha, \beta$ and $\gamma$ uniquely. 

In the cases when $\alpha = 0$, $\beta \leq 1$, or $\gamma = 0$, we find that the matrix $X = (BA)^\alpha A^\beta B^\gamma$ similarly determines the triple $(\alpha, \beta, \gamma)$ uniquely, and that furthermore in each of these cases $X$ has non-infinite entries in distinguishing places; for example, when $\alpha = 0$, we have $X_{3,4} = \beta -1$, and if additionally $\gamma = 0$, then $X_{2,4} = \beta$; otherwise, if $\gamma > 0$, then $X_{2,4} = -\infty$. We leave the details to the reader. In particular, we conclude that the map $(\alpha, \beta, \gamma) \mapsto (BA)^\alpha A^\beta B^\gamma$ is injective.
\end{proof}

By transposing, we hence find the following.

\begin{lemma}\label{Lem:aba=ab(M8)}
The monoid $M_8 = \pres{Mon}{a,b}{aba=ab}$, is isomorphic to the submonoid of $\UTZ{4}$ generated by the two matrices
\[
A = \begin{pmatrix}
-\infty & -\infty & -1 & 0 \\
-\infty & -\infty & 0 & -\infty \\
-\infty & -\infty & 1 & 1 \\
-\infty & -\infty & -\infty & 0 \\
\end{pmatrix} \qquad \textnormal{and} \qquad B = \begin{pmatrix}
1 & -\infty & 0 & -1 \\
-\infty & 0 & -\infty & 0 \\
-\infty & -\infty & -\infty & 0 \\
-\infty & -\infty & -\infty & 1 \\
\end{pmatrix}.
\]
\end{lemma}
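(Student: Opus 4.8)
The plan is to avoid any fresh computation by exploiting the fact that $M_8$ is the reverse of $M_5$. Reversing both sides of the defining relation $aba = ab$ of $M_8$ gives $aba = ba$, so $M_8 = M_5^{\trev}$. Since Lemma~\ref{Lem:aba=ba(M5)} already supplies an isomorphism $\psi \colon M_5 \to \langle A, B\rangle \leq \UTZ{4}$, Lemma~\ref{Lem:If-reverse-then-transpose} immediately yields an isomorphism $M_8 \cong \langle A^\top, B^\top\rangle$ inside $\MTZ{4}$, where $A, B$ are the generators of Lemma~\ref{Lem:aba=ba(M5)}.

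The one point requiring care is that $A^\top$ and $B^\top$ are \emph{lower} triangular, so the representation just obtained lives in $\MTZ{4}$ rather than in $\UTZ{4}$, and an extra step is needed to return to the upper-triangular semiring. First I would introduce the order-reversing permutation matrix $J \in \MTZ{4}$ with $J_{i,\,5-i} = 0$ for $1 \leq i \leq 4$ and all other entries $-\infty$. One checks $J \otimes J = \mathbf{I}_4$, so conjugation $c_J \colon X \mapsto J \otimes X \otimes J$ is an automorphism of $\MTZ{4}$. Moreover $(J \otimes X \otimes J)_{ij} = X_{5-i,\,5-j}$, from which it is immediate that $c_J$ carries lower-triangular matrices to upper-triangular ones. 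Applying $c_J$ to the representation of the previous paragraph therefore produces an isomorphism $M_8 \cong \langle J A^\top J,\, J B^\top J\rangle \leq \UTZ{4}$; equivalently, the whole construction amounts to applying the anti-diagonal transpose $X_{ij} \mapsto X_{5-j,\,5-i}$ to the generators $A, B$ of Lemma~\ref{Lem:aba=ba(M5)}.

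It then only remains to confirm that the two generators produced this way are exactly the matrices in the statement. This is a purely mechanical entry-by-entry check: using $(J A^\top J)_{ij} = A_{5-j,\,5-i}$ with the matrix $A$ of Lemma~\ref{Lem:aba=ba(M5)}, one reads off the rows $(-\infty,-\infty,-1,0)$, $(-\infty,-\infty,0,-\infty)$, $(-\infty,-\infty,1,1)$, $(-\infty,-\infty,-\infty,0)$, matching the claimed $A$, and similarly $J B^\top J$ reproduces the claimed $B$. I expect the main (and essentially only) obstacle to be conceptual rather than computational: Lemma~\ref{Lem:If-reverse-then-transpose} as stated delivers a representation merely in $\MTZ{4}$, so one must not overlook the conjugation by $J$ that restores upper-triangularity. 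Once this is in place the verification is routine.
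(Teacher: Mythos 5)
Your proposal is correct and follows essentially the same route as the paper, which simply derives the $M_8$ representation from Lemma~\ref{Lem:aba=ba(M5)} ``by transposing.'' You in fact supply a detail the paper leaves implicit --- the conjugation by the anti-diagonal permutation matrix $J$ needed to turn the lower-triangular transposes back into elements of $\UTZ{4}$ --- and your entry-by-entry identification $(JA^\top J)_{ij} = A_{5-j,\,5-i}$ does reproduce exactly the stated matrices.
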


This completes the proof of the ``positive'' direction of Theorem~\ref{Thm:Main_UT}. 

Note that we have produced an embedding of $M_5$ (and its dual $M_8$) into $\UTZ{4}$. Obviously, for the other $\UT$-tropical embeddings, the dimension of the matrices are the smallest possible. We do not know if this is the case for $M_5$ and $M_8$. We make the following conjecture:

\begin{conjecture}
The monoid $M_5 = \pres{Mon}{a,b}{aba=ba}$ has $\UT$-rank exactly $4$. 
\end{conjecture}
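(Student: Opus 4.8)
The upper bound $\operatorname{UT}$-rank$(M_5) \le 4$ is exactly Lemma~\ref{Lem:aba=ba(M5)}, so the whole content of the conjecture is the lower bound: $M_5$ does not embed in $\UTZ{n}$ for any $n \le 3$. The plan is to first reduce this to the single case $n=3$. The diagonal embedding $x \mapsto \operatorname{diag}(x,0,0)$ and the block embedding sending $X \in \UTZ{2}$ to the matrix in $\UTZ{3}$ having $X$ in the top-left block, $0$ in the bottom-right corner, and $-\infty$ in the remaining entries, are both semigroup embeddings, so $\UTZ{1} \hookrightarrow \UTZ{2} \hookrightarrow \UTZ{3}$. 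Hence $M_5 \hookrightarrow \UTZ{n}$ for some $n \le 3$ would already give $M_5 \hookrightarrow \UTZ{3}$, and it suffices to prove $M_5 \not\hookrightarrow \UTZ{3}$. (As a sanity check, $M_5$ is non-commutative, since $ab = (ba)^0 a b$ and $ba$ are distinct normal forms in Lemma~\ref{Lem:Shneerson-M5_aba=ba}, ruling out $n=1$ on its own.)

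The first method one would try is the identity obstruction of Proposition~\ref{Prop:Cain_UT_id}: an embedding $M_5 \hookrightarrow \UTZ{3}$ forces $M_5 \models U_2(xy,yx) \bumpeq V_2(xy,yx)$, so it would be enough to produce a substitution in $M_5$ violating this identity. However, I expect this route to fail entirely. Direct computation with the multiplication rule of Lemma~\ref{Lem:Shneerson-M5_aba=ba} strongly suggests that $M_5$ already satisfies Adian's identity $U_1(xy,yx) \bumpeq V_1(xy,yx)$; and if $U_1(p,q) \bumpeq V_1(p,q)$ holds in $M_5$ for \emph{all} substitutions, then applying it with $p,q$ replaced by $U_{n-1}(xy,yx), V_{n-1}(xy,yx)$ gives $U_n(xy,yx) \bumpeq V_n(xy,yx)$ for every $n$ by induction. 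Thus the entire family of identities in Proposition~\ref{Prop:Cain_UT_id} would be satisfied by $M_5$ and would yield no lower bound whatsoever. A genuinely structural argument is therefore needed.

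The structural approach I propose is to suppose for contradiction that $\phi \colon M_5 \hookrightarrow \UTZ{3}$ is an embedding, write $A = \phi(a)$ and $B = \phi(b)$ with diagonals $A_{ii} = p_i$, $B_{ii} = q_i$, and exploit the rigidity of the relation. Reading off the diagonal of $ABA = BA$ gives $2p_i + q_i = p_i + q_i$ in $\TZ$ for each $i$, whence for every $i$ either $p_i \in \{0,-\infty\}$ or $q_i = -\infty$. By Lemma~\ref{Lem:Shneerson-M5_aba=ba} the matrices $(BA)^\alpha A^\beta B^\gamma$ must be pairwise distinct over $\mathbb{N}^3$; in particular $a, b, ba$ each generate an infinite monogenic submonoid, so $A$, $B$, $BA$ each have infinitely many distinct powers. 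Since an upper-triangular tropical matrix whose diagonal entries all lie in $\{0,-\infty\}$ has max cycle mean $\le 0$ and hence only finitely many distinct powers (by max-plus spectral periodicity), each of $A$, $B$, $BA$ must carry a \emph{finite nonzero} diagonal entry. Combined with the constraint, the sets $P_A = \{i : p_i \notin \{0,-\infty\}\}$ and $P_B = \{i : q_i \notin \{0,-\infty\}\}$ are forced to be disjoint and nonempty, and a finite nonzero diagonal entry of $BA$ can occur only at some $i \in P_B$ with $p_i = 0$.

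The hard part is that these diagonal constraints are already consistent in dimension $3$ — the clause $q_i = -\infty$ lets $A$ keep a nonzero self-loop at a position where $B$ is diagonally degenerate — so they do not by themselves force a collision. To finish, one must control the off-diagonal entries, which behave exactly like weights of paths in a $3$-state max-plus weighted automaton: each entry of $(BA)^\alpha A^\beta B^\gamma$ is a maximum of affine functions of $(\alpha,\beta,\gamma)$, and the goal is to show that no such $3$-state system can make $(\alpha,\beta,\gamma) \mapsto (BA)^\alpha A^\beta B^\gamma$ injective while respecting $ABA = BA$. I would attempt this through a finite case analysis over the admissible diagonal patterns (recording which diagonal positions of $A$, $B$, $BA$ are finite-nonzero, zero, or $-\infty$, subject to $P_A \cap P_B = \varnothing$) together with the possible off-diagonal connectivity, exhibiting in each case two distinct normal forms whose images coincide. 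The intuition is that only two genuinely independent ``upward'' transitions are available among three states, whereas the normal form $(ba)^\alpha a^\beta b^\gamma$ carries three independent counters. This delicate piecewise-linear injectivity analysis is the crux; it is presumably what separates the failing dimension $3$ from the successful dimension $4$ of Lemma~\ref{Lem:aba=ba(M5)}, and it is the reason the statement remains only a conjecture.
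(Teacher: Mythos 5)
This statement is posed as a \emph{conjecture} in the paper: the author explicitly states that it is not known whether $M_5$ embeds in $\UTZ{2}$ or $\UTZ{3}$, so there is no proof in the paper to compare against. Your proposal correctly recognises this, and the parts you do carry out are sound: the reduction to excluding $n=3$ via the block embeddings $\UTZ{n}\hookrightarrow\UTZ{n+1}$; the observation that the identity family of Proposition~\ref{Prop:Cain_UT_id} is unlikely to give any lower bound (and in any case such identities are only necessary, never sufficient, for embeddability); the diagonal constraint $2p_i+q_i=p_i+q_i$ forced by $ABA=BA$; and the fact that a matrix in $\UTZ{n}$ with all diagonal entries in $\{0,-\infty\}$ has only finitely many distinct powers, so each of $A$, $B$, $BA$ must carry a finite nonzero diagonal entry, forcing $P_A\cap P_B=\varnothing$ with both nonempty.

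The genuine gap is exactly where you say it is, and it is fatal to the proposal as a proof: the necessary conditions you derive are mutually consistent in dimension $3$ (indeed, as you note, already in dimension $2$ one can choose diagonals and off-diagonal entries satisfying $ABA=BA$ with $A$, $B$, $BA$ all of infinite order), so the argument as written establishes nothing beyond $\UT$-rank $\geq 2$. The announced finite case analysis over admissible diagonal patterns, culminating in a collision of two distinct normal forms $(ba)^{\alpha}a^{\beta}b^{\gamma}$ in every case, is the entire content of the lower bound and is not carried out; the heuristic that ``three independent counters cannot fit into two upward transitions'' is suggestive (and in some configurations can be made precise by a degrees-of-freedom count on the finitely many potentially finite entries of $(BA)^{\alpha}A^{\beta}B^{\gamma}$), but it is not a proof, since tropical maxima of affine functions of $(\alpha,\beta,\gamma)$ can in principle separate more parameters than a naive dimension count indicates. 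In short: your framework is a reasonable attack on an open problem, but the statement remains unproved both in your proposal and in the paper.
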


In other words, while we do not know if $M_5$ (resp. $M_8$) can be embedded in $\UTZ{2}$ or $\UTZ{3}$, we conjecture that this is not possible.

\subsection{Negative results}

Let $M$ be a $\UT$-tropical one-relation monoid. As $\UTZ{n}$ satisfies a non-trivial identity for all $n\geq 1$ (see introduction), it follows that $M$ does, too. Hence, by Theorem~\ref{Thm:Shneerson's} either $M$ is monogenic, $\mathcal{K}$, the bicyclic monoid, or $|A| = 2$ (say $A= \{a, b\}$) and $M$ is one of the one-relation monoids $M_1, M_2, \dots, M_9$. We will first show that non-aperiodic finite monogenic one-relation monoids are not $\UT$-tropical. We will then show that none of $\mathcal{K}, M_2, M_3, M_4$ or $M_7$ are $\UT$-tropical. This will complete the proof of the forward direction of Theorem~\ref{Thm:Main_UT}.

\
\begin{center}
\textit{Monogenic finite $\UT$-tropical matrices are aperiodic}
\end{center}

\begin{lemma}\label{Lem:C-k-ell-not-UT-tropical}
Let $C_{k,\ell} = \pres{Mon}{a}{a^k = a^\ell}$ with $0 \leq \ell < k$. If $C_{k,\ell}$ is not aperiodic, i.e. if $k>\ell+1$, then $C_{k,\ell}$ is not $\UT$-tropical. 
\end{lemma}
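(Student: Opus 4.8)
The plan is to argue by contradiction. Suppose $C_{k,\ell}$ with $d := k-\ell \geq 2$ embeds via an injective semigroup homomorphism $\phi \colon C_{k,\ell} \to \UTZ{n}$, and set $A = \phi(a)$. The defining relation gives $A^k = A^\ell$, so the powers of $A$ are eventually periodic of period $d$, and the set $G = \{A^m : m \geq \ell\} = \{\phi(a^\ell), \dots, \phi(a^{k-1})\}$ is the image of the kernel (minimal ideal) of $C_{k,\ell}$, which is a cyclic group of order $d$. Since $\phi$ is injective and $d \geq 2$, the monoid $G$ is a nontrivial finite cyclic group sitting inside $\UTZ{n}$, with identity a nonzero idempotent $E \in \UTZ{n}$. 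Note that Corollary~\ref{Cor:Canc.-subsems-of-UTZ-are-comm} does not by itself suffice, since $G$ is already commutative; the real obstruction I must exhibit is that $\UTZ{n}$ contains no nontrivial finite subgroup. Thus the crux is to show that every $g \in G$ equals $E$.

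First I record the diagonal constraint. For upper triangular matrices the only surviving term in a diagonal entry of a product is the diagonal one, so $(XY)_{ii} = X_{ii} \otimes Y_{ii}$, and hence $(A^m)_{ii} = m\,A_{ii}$ for all $m \geq 1$. Reading $A^k = A^\ell$ on the diagonal gives $(k-\ell)A_{ii} = 0$ in $\TZ$, forcing $A_{ii} \in \{0, -\infty\}$ for every $i$. Consequently all powers of $A$ share one common diagonal; in particular so do every $g \in G$ and the idempotent $E$. Write $S = \{i : A_{ii} = 0\}$ and $Z = \{1,\dots,n\} \setminus S$; note $S \neq \varnothing$, since otherwise $A$ is strictly upper triangular, hence nilpotent, and $G$ would collapse to $\{\mathbf{0}_n\}$.

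Now fix $g \in G$; it is a power of $A$, so $gE = Eg = g$, and $g^d = E$, and $g, E$ share the diagonal just described. I will prove $g = E$ entrywise by induction on $j-i$, reading tropical products as maxima over weakly increasing index paths. When $i \in S$ or $j \in S$ the two bounds close at once: padding a length-$d$ path with ``stays'' at the $0$-diagonal index $i$ (resp.\ $j$) gives $E_{ij} = (g^d)_{ij} \geq g_{ij}$, while $gE = g$ (resp.\ $Eg = g$) gives $g_{ij} \geq E_{ij}$, whence equality. The remaining case, $i<j$ with both $i,j \in Z$, is the one I expect to be the main obstacle, as no padding is possible when the diagonal is $-\infty$ at both $i$ and $j$. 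Here I plan to use the group identity together with the inductive hypothesis: in $(Eg)_{ij} = g_{ij}$ the extreme terms $k=i$ and $k=j$ vanish (they involve $E_{ii}$ or $g_{jj}$, both $-\infty$), so $g_{ij} = \max_{i<k<j} E_{ik} \otimes g_{kj}$; each $g_{kj}$ has a smaller index gap and equals $E_{kj}$ by induction, while the same truncation applied to $E = E \otimes E$ yields $E_{ij} = \max_{i<k<j} E_{ik}\otimes E_{kj}$. Comparing the two gives $g_{ij} = E_{ij}$ and completes the induction.

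Hence $g = E$ for every $g \in G$, so $G$ is trivial, contradicting $d \geq 2$; therefore $C_{k,\ell}$ is not $\UT$-tropical whenever $k > \ell+1$. The only point requiring care in the writeup is the standard monogenic-semigroup fact that $\{A^m : m \geq \ell\}$ is genuinely a cyclic group of order $d$ with idempotent identity $E$; everything else reduces to the two tropical path estimates above.
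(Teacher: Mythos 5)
Your argument is correct, and it shares the paper's overall reduction: pass to the cyclic subgroup of order $d = k-\ell \geq 2$ sitting inside $C_{k,\ell}$, observe that the relation forces every diagonal entry of $A$ into $\{0,-\infty\}$ (since $(A^m)_{ii} = mA_{ii}$ for upper triangular matrices), and derive a contradiction from the existence of a nontrivial finite cyclic subgroup of $\UTZ{n}$. But the key step is executed quite differently. The paper argues dynamically: it asserts that each off-diagonal entry of the sequence $A, A^2, A^3, \dots$ is eventually strictly increasing (so the subsemigroup is infinite) or eventually $-\infty$ (so $A^{m+1} = A^m$ for large $m$ and the group is trivial), a dichotomy it calls a ``routine argument'' and does not spell out. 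You instead fix the idempotent identity $E$ of the group and pin each group element $g$ to $E$ entrywise by induction on $j-i$: when $A_{ii}=0$ or $A_{jj}=0$ the two inequalities $E_{ij}=(g^d)_{ij}\geq g_{ij}$ and $g_{ij}=(gE)_{ij}\geq E_{ij}$ (resp.\ $(Eg)_{ij}$) close immediately, and when both diagonal entries are $-\infty$ the extreme terms of $(Eg)_{ij}=g_{ij}$ and of $(E\otimes E)_{ij}=E_{ij}$ drop out, leaving sums over intermediate indices to which the inductive hypothesis applies. Your version costs a case split and an induction but is fully explicit and in fact proves the stronger statement that every torsion element of a subgroup of $\UTZ{n}$ equals that subgroup's identity; the paper's version is shorter on the page but leans on an asymptotic dichotomy that, as literally stated, fails for idempotents such as the matrix with top row $(0,0)$ and bottom row $(-\infty,-\infty)$, and would need exactly the kind of care you supply to be made precise. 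The one cosmetic point to fix in your writeup is the case $\ell = 0$, where the group identity is $\phi(1) = A^k$ rather than the matrix power $A^0 = \mathbf{I}_n$; nothing in your argument actually uses $A^0$, so this is purely notational.
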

\begin{proof}
If $k > \ell +1$, then $\langle a^\ell \rangle$ is a cyclic group of order $k-\ell \geq 2$. However, the cyclic group $C_p$ does not embed in $\UTZ{n}$ when $p \geq 2$. Indeed, any matrix $A$ generating such a group $C_p$ would have to have all its diagonal entries as $0$ or $-\infty$ (as $(A^2)_{i,i} = A_{i,i} + A_{i,i}$), and a routine argument shows that any fixed off-diagonal entry for the matrices $A, A^2, A^3, \dots$ is eventually strictly increasing or else is eventually $-\infty$. If any such entry is strictly increasing, then $\langle A \rangle$ cannot be finite, and if all such entries are $-\infty$ for, say, $A^m$ for some sufficiently large $m$, then $A^{m+1} = A^m$, and hence $m = p = 1$, as desired.
\end{proof}

The author wishes to thank T. Aird for showing him the above proof.

\
\begin{center}
\textit{$M_2, M_3, M_4$, and $M_7$ are not $\UT$-tropical}
\end{center}

\begin{lemma}\label{Lem:M2-M3-not-UT}
Neither of the following two monoids are $\UT$-tropical:
\[
M_2 = \pres{Mon}{a,b}{a^2 = b^2} \quad \textnormal{and} \quad M_3 = \pres{Mon}{a,b}{aba=b}.
\]
\end{lemma}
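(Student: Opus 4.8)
The plan is to derive both non-embeddings from Corollary~\ref{Cor:Canc.-subsems-of-UTZ-are-comm}, which guarantees that every cancellative subsemigroup of $\UTZ{n}$ is commutative. As remarked immediately after Theorem~\ref{Thm:Shneerson's}, the monoids $M_2$ and $M_3$ are both cancellative (they are cases (2) and (3), cancellative by Adian's left/right cycle-free theory). Hence, if either were $\UT$-tropical, it would embed as a cancellative subsemigroup of some $\UTZ{n}$ and so would have to be commutative. It therefore suffices to show that neither $M_2$ nor $M_3$ is commutative, i.e. that $ab \neq ba$ in each.

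For $M_2 = \pres{Mon}{a,b}{a^2=b^2}$, I would exhibit a monoid homomorphism onto the infinite dihedral group $D_\infty = \pres{Gp}{x,y}{x^2 = y^2 = 1}$, sending $a \mapsto x$ and $b \mapsto y$. This respects the defining relation, as $a^2$ and $b^2$ both map to the identity. Since $xy$ has infinite order in $D_\infty$ while $yx = (xy)^{-1}$, we have $xy \neq yx$, and therefore $ab \neq ba$ in $M_2$. For $M_3 = \pres{Mon}{a,b}{aba=b}$, I would map onto the non-abelian group $\pres{Gp}{a,b}{b^{-1}ab = a^{-1}} \cong \mathbb{Z} \rtimes \mathbb{Z}$ (the same type of group $\mathbb{Z}\rtimes\mathbb{Z}$ that arose for $\mathcal{K}$), sending each generator to the generator of the same name. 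In this group $ab = ba^{-1}$, so $aba = ba^{-1}a = b$ and the relation holds; moreover $ab = ba^{-1} \neq ba$ since $a$ has infinite order, so $a^2 \neq 1$. Hence $M_3$ is not commutative either.

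The only genuine work here is the verification of non-commutativity; once that is established, Corollary~\ref{Cor:Canc.-subsems-of-UTZ-are-comm} finishes both cases at once. I expect the only point requiring any thought to be the selection of convenient non-abelian quotients — the infinite dihedral group for $M_2$ and $\mathbb{Z}\rtimes\mathbb{Z}$ for $M_3$ — after which the relation-checking and the order computations distinguishing $ab$ from $ba$ are entirely routine. (One could alternatively try to produce explicit irreducible normal forms via a rewriting system, but for $M_2$ the natural rule $b^2\to a^2$ is not confluent, so the quotient-group approach is cleaner and self-contained.)
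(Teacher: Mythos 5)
Your proposal is correct and follows the same route as the paper: both reduce the statement to the non-commutativity of $M_2$ and $M_3$, using their cancellativity (via Adian's cycle-free theory) together with Corollary~\ref{Cor:Canc.-subsems-of-UTZ-are-comm}. The only divergence is in the final routine verification that $ab \neq ba$ --- the paper argues directly on words (no relation word is a subword of $ab$; the words equal to $ab$ in $M_3$ are only $ab$ and $a^iba$ for $i>0$), whereas you pass to the non-abelian quotient groups $D_\infty$ and $\mathbb{Z}\rtimes\mathbb{Z}$ --- and both verifications are valid.
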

\begin{proof}
As $M_2$ has no left or right cycles, it is cancellative \cite{Adian1966}. Consequently, by Corollary~\ref{Cor:Canc.-subsems-of-UTZ-are-comm}, it suffices to show that $M_2$ is not commutative. However, this is obvious -- no relation word is a subword of $ab$, and hence $ab \neq ba$ in $M_2$. Similarly, for $M_3$ it suffices to prove that $M_3$ is not commutative. This is easily proved, as the word $ab$ is clearly only equal to itself and words of the form $a^iba$ for $i>0$.
\end{proof}

We now show that $M_4$ and $M_7$ both contain a copy of $M_3$. This shows that neither of these monoids can be $\UT$-tropical, either.

\begin{lemma}\label{Lem:M3-in-M4-and-M7}
There exist (monoid) embeddings $M_3 \leq M_4$ and $M_3 \leq M_7$. 
\end{lemma}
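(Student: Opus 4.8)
The plan is to realise each embedding by constructing an explicit \emph{retraction}, which sidesteps the need for a normal form (and an associated confluent rewriting system) in $M_4$ or $M_7$.

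First I would define $\phi \colon M_3 \to M_4$ on generators by $a \mapsto a$ and $b \mapsto ba$. This is a well-defined monoid homomorphism precisely because the single defining relation $aba = b$ of $M_3$ is sent to $a(ba)a = aba^2$, which equals $ba$ in $M_4$ by \emph{its} defining relation; so the relation of $M_3$ maps to a true equality. To prove $\phi$ is injective, I would exhibit a homomorphism $\rho \colon M_4 \to M_3$ with $\rho \circ \phi = \mathrm{id}_{M_3}$, since any homomorphism admitting a left inverse is injective. Define $\rho$ on generators by $a \mapsto a$ and $b \mapsto ab$. The key fact needed to check that $\rho$ respects the relation of $M_4$ is the identity $a^n b a^n = b$ in $M_3$ for all $n \geq 0$, which follows by a one-line induction from $aba = b$. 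Granting this, $\rho$ sends $aba^2$ to $a \cdot ab \cdot a^2 = a^2 b a^2 = b$ and $ba$ to $ab \cdot a = aba = b$, so both sides agree in $M_3$ and $\rho$ is a genuine homomorphism. Finally $\rho(\phi(a)) = a$ and $\rho(\phi(b)) = \rho(ba) = ab \cdot a = aba = b$, so $\rho \circ \phi = \mathrm{id}_{M_3}$ and $\phi$ embeds $M_3$ into $M_4$.

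For the embedding $M_3 \leq M_7$ I would invoke the anti-isomorphism. Reversing the relation $aba^2 = ba$ yields $a^2 b a = ab$, so $M_4^{\trev} \cong M_7$, while $M_3^{\trev} \cong M_3$ since the relation $aba = b$ is palindromic. An injective homomorphism transfers under reversal (an anti-isomorphism precomposed and postcomposed with a homomorphism is again a homomorphism, and injectivity is preserved), so the embedding $M_3 \hookrightarrow M_4$ reverses to $M_3 \cong M_3^{\trev} \hookrightarrow M_4^{\trev} \cong M_7$. Alternatively one can repeat the retraction argument directly inside $M_7$, taking $a \mapsto a$, $b \mapsto ab$ with retraction $a \mapsto a$, $b \mapsto ba$, using the same identity $a^n b a^n = b$ in $M_3$.

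The main obstacle is the injectivity of $\phi$: a direct combinatorial proof would require a manageable normal form for $M_4$, whose obvious rewriting system is not confluent (for instance $aba \to b$ leaves the critical pair coming from $ababa$ unresolved). The retraction $\rho$ removes this difficulty entirely, reducing the entire statement to the elementary identity $a^n b a^n = b$ in $M_3$; the only thing to verify with any care is that $\rho$ is well-defined, which is exactly the $n=2$ instance of that identity.
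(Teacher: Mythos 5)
Your proof is correct, but it proceeds quite differently from the paper's. The paper introduces a new generator $c = ba$ via a Tietze transformation, rewrites $M_4$ as $\pres{Mon}{a,b,c}{aca=c,\ ba=c}$, and then argues that the submonoid $\langle a, c\rangle_{M_4}$ has the presentation $\pres{Mon}{a,c}{aca=c}$ by analysing sequences of elementary transformations and eliminating ``turns'' in the sense of Adian; the $M_7$ case is handled analogously with $c = ab$. You instead exhibit the same map $a \mapsto a$, $b \mapsto ba$ (so the image is the same submonoid $\langle a, ba\rangle_{M_4}$) and certify injectivity by an explicit retraction $\rho \colon M_4 \to M_3$, $a \mapsto a$, $b \mapsto ab$, whose well-definedness reduces to the identity $a^n b a^n = b$ in $M_3$ --- I checked all four relation verifications ($\phi(aba)=aba^2=ba=\phi(b)$ in $M_4$; $\rho(aba^2)=a^2ba^2=b=aba=\rho(ba)$ in $M_3$) and the composite $\rho\circ\phi=\mathrm{id}_{M_3}$, and they all hold. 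Your approach buys a completely self-contained, two-line-verifiable argument that needs no rewriting theory and no appeal to Adian's machinery, at the cost of having to guess the retraction; the paper's approach is more systematic (it identifies the presentation of the submonoid directly) but leans on a ``routine argument'' that is not spelled out. Your transfer to $M_7$ via reversal is also sound, since $M_4^{\trev} \cong M_7$ and $M_3^{\trev} \cong M_3$, and injectivity is preserved under anti-isomorphisms. One trivial slip in your closing discussion: the rule $aba \to b$ and the critical pair from $ababa$ pertain to $M_3$'s rewriting system, not $M_4$'s, but this aside plays no role in the argument.
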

\begin{proof}
First, in $M_4$, introduce a new generator $c = ba$. Then 
\begin{equation}\label{Eq:M_4new}
M_4 \cong \pres{Mon}{a,b,c}{aca = c, ba = c}.
\end{equation}
It is now clear that $\langle a, c \rangle_{M_4}$ has the presentation $M' = \pres{Mon}{a,c}{aca=c}$, i.e. it is isomorphic to $M_3$. Indeed, consider any sequence $s$ of elementary transformations from a word over $\{a, c \}$ into another. A routine argument now shows that if $s$ contains any elementary transformation $c \to ba$, then $s$ contains a turn (in the sense of Adian \cite{Adian1976}), and so this can be eliminated. Hence, by induction on the number of such transformations in $s$, any pair of words both over $\{ a, c\}$ and equal in $M_4$ are also equal in $M_3$, proving the claim. Thus $M_3 \leq M_4$. The case $M_3 \leq M_7$ is proved entirely analogously to $M_3 \leq M_4$, taking instead $c=ab$. 
\end{proof}

This completes the proof of the ``negative'' results of Theorem~\ref{Thm:Main_UT}, and hence also completes the proof of Theorem~\ref{Thm:Main_UT}. \qed

\section{One-relation monoids in $\MTZ{n}$}\label{Sec:Ordinary tropical}

\noindent In this section, we will describe the $\MT$-tropical one-relation monoids, i.e. those which embed into $\MTZ{n}$ for some $n \geq 1$. 

\begin{theoremB}\label{Thm:Main_MT}
Let $M$ be a one-relation monoid which is not $M_4$ or $M_7$. Then $M$ is $\MT$-tropical if and only if $M$ is monogenic, $\mathcal{K}$, the bicyclic monoid, or one of $M_2, M_3, M_5, M_6^{(k)}, M_8$, or $M_9^{(k)}$, for some $k \geq 1$.
\end{theoremB}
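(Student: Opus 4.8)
The plan is to prove both directions by reusing the $\UT$-tropical machinery wherever possible and handling only the genuinely new cases directly. For the forward (``only if'') direction, I would first dispose of monogenic monoids: the cyclic groups $C_{k,\ell}$ with $k > \ell+1$ embed into $\MTZ{n}$ even though they fail to be $\UT$-tropical, so unlike the upper-triangular case every monogenic monoid is $\MT$-tropical, and I must verify this by exhibiting a faithful representation of a finite cyclic group (a permutation-type tropical matrix realizing the cyclic part, padded to carry the pre-period). The real content of the ``only if'' direction is to show that, apart from $M_4$ and $M_7$, no \emph{other} one-relation monoid outside the listed family can be $\MT$-tropical. Since $\MTZ{n}$ still satisfies a non-trivial identity (it contains no free subsemigroup of rank $2$, by the same Cain--et-al.\ type results, or by Shneerson's dichotomy applied after noting tropicality forces polynomial growth), any $\MT$-tropical one-relation monoid must already appear in Shneerson's list of Theorem~\ref{Thm:Shneerson's}. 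Thus the forward direction reduces to checking that every monoid in Shneerson's list \emph{except} $M_4, M_7$ is accounted for: those we claim are $\MT$-tropical are exactly $M_1, M_2, M_3, M_5, M_6^{(k)}, M_8, M_9^{(k)}, \mathcal{K}, \mathcal{B}$ and the monogenics, and there is nothing left over to exclude once $M_4, M_7$ are set aside.

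For the converse (``if'') direction I would proceed monoid by monoid, invoking the embeddings already built. Every $\UT$-tropical monoid is a fortiori $\MT$-tropical, so $M_1, M_5, M_6^{(k)}, M_8, M_9^{(k)}, \mathcal{B}$, and the aperiodic monogenics come for free from Section~\ref{Sec:Upper-tropical}. This leaves the genuinely new embeddings: the non-aperiodic finite monogenics (above), the Klein-bottle group $\mathcal{K}$, and $M_2$ and $M_3$, each of which is \emph{not} $\UT$-tropical (by Corollary~\ref{Cor:Canc.-subsems-of-UTZ-are-comm} and Lemma~\ref{Lem:M2-M3-not-UT}) yet is claimed $\MT$-tropical. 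For these I would construct explicit full tropical matrices, following exactly the template used throughout the paper: pick a convenient normal form for the monoid, write the candidate generators, verify the defining relation by a direct tropical product, and then check injectivity by showing the matrix entries of the normal-form products determine the normal-form parameters uniquely. The paper's table signals that $\mathcal{K}, M_2, M_3$ all admit rank-$2$ representations, so I would aim for $2\times 2$ matrices in each case.

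The main obstacle will be the $M_2, M_3$ and $\mathcal{K}$ embeddings, since these are precisely the cancellative monoids that Corollary~\ref{Cor:Canc.-subsems-of-UTZ-are-comm} forbids from living inside any $\UTZ{n}$; the whole point is that dropping upper-triangularity buys back exactly enough non-commutativity to represent them faithfully. The crux is to find full $2\times 2$ tropical matrices $A, B$ whose product encodes the relevant normal-form multiplication (e.g.\ for $\mathcal{K}=\mathbb{Z}\rtimes\mathbb{Z}$ one wants matrices tracking the two $\mathbb{Z}$-coordinates together with the sign twist from $bab^{-1}=a^{-1}$) while remaining injective on normal forms; the sign-flip in $\mathcal{K}$ and the square-collapse $a^2=b^2$ in $M_2$ are the features that resist upper-triangular encoding and must be handled by genuinely using off-diagonal interaction in both triangles. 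Once those explicit matrices are in hand, the injectivity verification is the same determinant/distinguishing-entry bookkeeping already rehearsed in Lemma~\ref{Lem:aba=ba(M5)}, so no new technique beyond careful case analysis is required. I would flag that $M_4$ and $M_7$ are deliberately excluded and defer them to the discussion in \S\ref{Subsec:Open_m4_m7}.
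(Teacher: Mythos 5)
Your plan is correct and follows the same skeleton as the paper: reduce the ``only if'' direction to membership in Shneerson's list (Theorem~\ref{Thm:Shneerson's}), inherit all the $\UT$-tropical embeddings, handle the finite monogenic monoids by a permutation-type representation, and then supply new rank-$2$ representations for $\mathcal{K}$, $M_2$ and $M_3$. The differences are worth recording. For the ``only if'' direction the paper leans (implicitly) on the fact that $\MTZ{n}$ satisfies a non-trivial identity; your alternative justification via polynomial growth of finitely generated subsemigroups of $\MTZ{n}$ combined with Shneerson's dichotomy for one-relation monoids is valid and arguably more elementary, but your attribution of ``no free subsemigroup'' to the Cain et al.\ results is off --- those concern $\UTZ{n}$ only, and Shneerson's dichotomy is specific to one-relation monoids (the paper itself recalls Shneerson's 1993 example showing the equivalence fails for general semigroups), so you must route through the polynomial-growth estimate on matrix entries, not through a general principle. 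For the positive direction the paper does not build all three new representations from scratch: it proves $M_3$ directly (Lemma~\ref{Lem:aba=b-is_tropical_real}) against the normal form of Lemma~\ref{Lem:aba=b_normal_form}, then obtains $M_2$ by exhibiting an embedding $M_2\leq M_3$ induced by the free-group automorphism $a\mapsto ba$, $b\mapsto b$ (Lemma~\ref{Lem:M2-in-M3}), which saves one injectivity computation; and for $\mathcal{K}$ it invokes Shitov's characterisation of the groups embeddable in $\MTZ{n}$ (precisely the subgroups of $\mathbb{Z}\wr S_n$) alongside explicit matrices checked against Jantzen's normal form (Lemma~\ref{Lem:Klein-bottle-embeds-in-U}). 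Your direct-construction route works for all three, but note that essentially all of the mathematical content lies in actually producing and verifying those matrices --- the step your plan defers --- and for $\mathcal{K}$ the wreath-product detour is what guarantees in advance that a rank-$2$ representation exists at all.
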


The cases $M_4$ and $M_7$, which are cases which have evaded the author's attempts at classifying, are treated below in \S\ref{Subsec:Open_m4_m7}. Note that we have already seen in \S\ref{Sec:Ordinary tropical} that the bicyclic monoid, $M_5, M_6^{(k)}, M_8$, and $M_9^{(k)}$ ($k \geq 1$) are all $\UT$-tropical, and hence also $\MT$-tropical. As the full transformation monoid $T_n$ is $\MT$-tropical (embedding into $\MTZ{n}$ via the analogue of permutation matrices), it follows that any finite semigroup is $\MT$-tropical, see also \cite[Theorem~3.3.1]{Taylor2017}. To show Theorem~\ref{Thm:Main_MT}, it hence suffices to show that $\mathcal{K}$, $M_2$, and $M_3$ are $\operatorname{M}$-tropical. 

\
\begin{center}
\textit{$M_2$ and $M_3$ are $\MT$-tropical}
\end{center}

We begin by showing that $M_3$ is $\operatorname{M}$-tropical, using a normal form lemma.

\begin{lemma}[Shneerson]\label{Lem:aba=b_normal_form}
Let $M_3 = \pres{Mon}{a,b}{aba=b}$. Then every word is equal in $M_3$ to some unique word either of the form $b^\alpha a^\beta$ for $\alpha, \beta \geq 0$, or of the form $b^\alpha a^\beta b$ for $\alpha \geq 0$, $\beta\geq 1$, and multiplication in $M_3$ is given by the rule 
\[
b^{\alpha_1} a^{\beta_1} b^{\alpha_2} a^{\beta_2} = \begin{cases}
b^{\alpha_1 + \alpha_2} a^{\beta_1 + \beta_2}, & \text{if $\alpha_2$ is even} \\
b^{\alpha_1 + \alpha_2} a^{\beta_1 - \beta_2}, & \text{if $\alpha_2$ is odd and $\beta_2 \geq \beta_1$} \\
b^{\alpha_1 + \alpha_2} a^{\beta_2 - \beta_1}, & \text{if $\alpha_2$ is odd and $\beta_1 \geq \beta_2$.} \\
\end{cases}
\]
\end{lemma}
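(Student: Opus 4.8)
The plan is to derive the two normal forms and the multiplication table from a pair of elementary consequences of $aba=b$, proving uniqueness via a right action on the set of normal-form words. First I would record the two identities that drive everything. Iterating the relation (cancelling an inner $aba$ removes one $a$ on each side) gives, for all $m,n\geq 0$,
\[
a^m b a^n = \begin{cases} a^{m-n}b & \text{if } m \geq n,\\ b\, a^{n-m} & \text{if } n \geq m,\end{cases}
\]
which agree (both equal $b$) when $m=n$. Second, $a$ commutes with $b^2$: from $aba=b$ we get $baba = b(aba) = b^2$, whence $ab^2 = a(baba) = (aba)(ba) = b(ba) = b^2 a$, and therefore $a^j b^{2k} = b^{2k} a^j$ for all $j,k$.

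With these in hand, existence of the normal forms is a routine induction on length: I would check that the set of words of the two prescribed shapes is closed under right multiplication by $a$ and by $b$. Indeed $(b^\alpha a^\beta)a = b^\alpha a^{\beta+1}$ and $(b^\alpha a^\beta)b$ equals $b^{\alpha+1}$ (if $\beta=0$) or $b^\alpha a^\beta b$ (if $\beta\geq 1$); and for the second shape $(b^\alpha a^\beta b)b = b^\alpha a^\beta b^2 = b^{\alpha+2}a^\beta$ by commutation, while $(b^\alpha a^\beta b)a = b^\alpha a^{\beta-1}b$ (or $b^{\alpha+1}$ when $\beta=1$) by the first identity. Since the empty word is already in normal form, every word reduces to one of the two shapes. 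For uniqueness I would pass to the right regular action on the set $N$ of normal-form words: define $\rho_a,\rho_b\colon N\to N$ by the four rules just listed and verify directly that successively right-multiplying any normal form by $a$, then $b$, then $a$ (renormalising at each step) has the same effect as right-multiplying by $b$. This finite check shows $a\mapsto\rho_a$, $b\mapsto\rho_b$ respects the defining relation, hence extends to a homomorphism $\varphi\colon M_3\to T(N)$. Since reading any normal-form word $n$ letter by letter from the empty word returns $n$ unchanged, two normal forms that are equal in $M_3$ have equal images under $\varphi$ and so coincide, giving uniqueness.

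The multiplication table then comes from normalising $b^{\alpha_1}a^{\beta_1}b^{\alpha_2}a^{\beta_2}$: writing $\alpha_2 = 2q+r$ with $r\in\{0,1\}$, push $a^{\beta_1}$ across $b^{2q}$ by commutation, reducing to the normalisation of $a^{\beta_1}b^{r}a^{\beta_2}$. When $r=0$ ($\alpha_2$ even) one lands in the first case, $b^{\alpha_1+\alpha_2}a^{\beta_1+\beta_2}$; when $r=1$ ($\alpha_2$ odd) the first identity leaves a residual $a^{\beta_1}b a^{\beta_2}$, which collapses to $a^{\beta_1-\beta_2}b$ or $b\,a^{\beta_2-\beta_1}$ according to the sign of $\beta_1-\beta_2$. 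I expect this parity case to be the main obstacle, and the place demanding the most care: the verification $\rho_a\rho_b\rho_a=\rho_b$ and, above all, tracking whether a \emph{trailing} $b$ survives. That residual $b$ appears precisely when $\alpha_2$ is odd and $\beta_1>\beta_2$ (already visible in $a\cdot b = ab$, which is of the second shape), so the product of two words of the first shape need not again be of the first shape, and the case analysis in the multiplication rule must be arranged to record exactly this boundary. Everything outside these boundary cases is bookkeeping that follows mechanically from the two identities above.
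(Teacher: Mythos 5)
Your proposal is correct, and it supplies considerably more than the paper does: the lemma is stated as Shneerson's essentially without proof, the only justification offered being that uniqueness is ``clear by the cancellativity of $M_3$'' --- i.e.\ the paper implicitly leans on the fact that the cycle-free monoid $M_3$ embeds into the one-relator group $G_3 = \pres{Gp}{a,b}{aba=b}$ (the Klein bottle group), where the words $b^\alpha a^\beta$ and $b^\alpha a^\beta b = b^{\alpha+1}a^{-\beta}$ are visibly pairwise distinct. Your route is different and self-contained: the identities $a^mba^n = a^{m-n}b$ (resp.\ $ba^{n-m}$) and $ab^2=b^2a$ give existence by closure of the two shapes under right multiplication, and uniqueness follows from the right regular action on the set of normal forms (van der Waerden's trick), the only real verification being $\rho_a\rho_b\rho_a=\rho_b$, which does hold in every case. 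What your approach buys is independence from Adian's embeddability theory for cycle-free monoids; what it costs is the finite but fiddly case check on the action, which you rightly single out as the delicate step.

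One substantive point in your favour: the multiplication rule you derive is the correct one, and it does \emph{not} agree with the rule displayed in the lemma. As printed, both odd-$\alpha_2$ cases produce non-positive exponents of $a$; for instance $(\alpha_1,\beta_1,\alpha_2,\beta_2)=(0,0,1,1)$, i.e.\ the product $1\cdot ba$, falls into the second case and yields $ba^{-1}$, which is not a normal form and, read in $G_3$, equals $ab\neq ba$. The correct rule for $\alpha_2$ odd is $b^{\alpha_1+\alpha_2}a^{\beta_2-\beta_1}$ when $\beta_2\geq\beta_1$, and $b^{\alpha_1+\alpha_2-1}a^{\beta_1-\beta_2}b$ when $\beta_1\geq\beta_2$ --- precisely the trailing-$b$ boundary you flag. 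So do not contort your computation to match the displayed table; your derivation is, in effect, the proof that the table needs correcting.
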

The uniqueness of the representation is clear by the cancellativity of $M_3$. 

\begin{lemma}\label{Lem:aba=b-is_tropical_real}
The monoid $M_3 = \pres{Mon}{a,b}{aba=b}$ is isomorphic to the submonoid of $\MTZ{2}$ generated by the two matrices
\[
A = \begin{pmatrix}
1 & -\infty  \\
-\infty & -1 \\
\end{pmatrix} \quad \textnormal{and} \quad B = \begin{pmatrix}
-\infty & 1 \\
0 & -\infty \\
\end{pmatrix}.
\]
\end{lemma}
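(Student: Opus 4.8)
The plan is to show that the assignment $\psi\colon a \mapsto A,\ b \mapsto B$ extends to an injective monoid homomorphism $M_3 \to \MTZ{2}$. First I would verify by a direct tropical computation that $A \otimes B \otimes A = B$, so that the defining relation $aba=b$ is respected and $\psi$ is a well-defined monoid homomorphism with $1 \mapsto \mathbf{I}_2$. Injectivity is then, by the normal form of Lemma~\ref{Lem:aba=b_normal_form}, equivalent to showing that the assignment sending each normal form to its image matrix is injective; so it suffices to compute $\psi$ on the normal forms $b^\alpha a^\beta$ ($\alpha,\beta \geq 0$) and $b^\alpha a^\beta b$ ($\alpha \geq 0$, $\beta \geq 1$) and to show these images are pairwise distinct.

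The computational input is small. Since $A$ is tropically diagonal, $A^\beta = \begin{pmatrix} \beta & -\infty \\ -\infty & -\beta\end{pmatrix}$; and since $B$ is anti-diagonal with $B^{2} = \begin{pmatrix} 1 & -\infty \\ -\infty & 1 \end{pmatrix}$, the power $B^\alpha$ is diagonal when $\alpha$ is even and anti-diagonal when $\alpha$ is odd, with entries linear in $\lfloor \alpha/2 \rfloor$. Multiplying out, I would record the image matrices of the four families obtained by fixing the normal-form shape and the parity of $\alpha$. The clean structural observation is that $\psi(w)$ is diagonal precisely when the total number of $b$-letters in $w$ is even (each $B$ is anti-diagonal, each $A$ diagonal), so the trailing $b$ in the second normal-form shape flips the diagonal/anti-diagonal type. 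Thus the four families split into two diagonal ones — $b^{\alpha}a^{\beta}$ with $\alpha$ even, and $b^{\alpha}a^{\beta}b$ with $\alpha$ odd — and two anti-diagonal ones.

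Separating the families is then a matter of reading off sign invariants. A diagonal image of $b^{\alpha}a^{\beta}$ has $(1,1)$-entry $\geq (2,2)$-entry, while a diagonal image of $b^{\alpha}a^{\beta}b$ has $(1,1)$-entry strictly less than its $(2,2)$-entry; hence these two diagonal families are disjoint, and within each the sum and difference of the two diagonal entries recover $\alpha$ and $\beta$ uniquely. Likewise, for the two anti-diagonal families the quantity (lower-left entry) $-$ (upper-right entry) is odd, lying in $\{-1,1,3,\dots\}$ for the images of $b^{\alpha}a^{\beta}$ and in $\{-3,-5,\dots\}$ for the images of $b^{\alpha}a^{\beta}b$; these ranges are disjoint, and again the two finite entries determine $\alpha,\beta$. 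Combining the four families gives injectivity of $\psi$ on normal forms, hence injectivity of $\psi$, completing the proof.

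The only genuinely delicate point — the main obstacle — is the bookkeeping across parities: one must resist using the parity of $\alpha$ itself as the diagonal/anti-diagonal invariant, since it is the parity of the \emph{total} $b$-count (including the trailing $b$) that governs the matrix shape. Consequently the dangerous potential collisions are exactly those between a form $b^{\alpha}a^{\beta}$ and a form $b^{\alpha'}a^{\beta'}b$ of matching $b$-parity, and these are precisely what the sign invariants above rule out; all remaining distinctions are routine $2 \times 2$ tropical arithmetic.
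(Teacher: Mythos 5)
Your proposal is correct and follows essentially the same route as the paper: verify $A\otimes B\otimes A=B$, compute the images of the normal forms $B^\alpha A^\beta$ and $B^\alpha A^\beta B$ by parity of $\alpha$, and recover $(\alpha,\beta)$ and the normal-form shape from the positions and signs of the finite entries. Your organisation of the four families via the total $b$-parity and the explicit disjoint sign ranges is a slightly cleaner packaging of the case analysis the paper carries out by worked example, but the underlying argument is identical.
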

\begin{proof}
One checks directly that for all $\alpha, \beta \geq 0$, we have
\[
A^\beta = \begin{pmatrix}
\beta & -\infty  \\
-\infty & -\beta \\
\end{pmatrix}
\quad \text{and} \quad B^\alpha = \begin{cases} \begin{pmatrix}
\frac{\alpha}{2} & -\infty  \\
-\infty & \frac{\alpha}{2} \\
\end{pmatrix}, & \text{if $\alpha$ is even} \\
\begin{pmatrix}
-\infty & \frac{\alpha+1}{2}  \\
\frac{\alpha-1}{2} & -\infty \\
\end{pmatrix}, & \text{if $\alpha$ is odd}.
\end{cases}
\]
Hence
\begin{equation}\label{Eq:BalphaAbeta}
B^\alpha A^\beta = \begin{cases} \begin{pmatrix}
\frac{\alpha}{2}+\beta & -\infty  \\
-\infty & \frac{\alpha}{2}-\beta \\
\end{pmatrix}, & \text{if $\alpha$ is even} \\
\begin{pmatrix}
-\infty & \frac{\alpha+1}{2}-\beta  \\
\frac{\alpha-1}{2}+\beta & -\infty \\
\end{pmatrix}, & \text{if $\alpha$ is odd}.
\end{cases}
\end{equation}
\begin{equation}\label{Eq:BalphaAbetaBBBB}
B^\alpha A^\beta B = \begin{cases} \begin{pmatrix}
-\infty & \frac{\alpha}{2}+\beta+1   \\
\frac{\alpha}{2}-\beta & -\infty \\
\end{pmatrix}, & \text{if $\alpha$ is even} \\
\begin{pmatrix}
\frac{\alpha+1}{2}-\beta & -\infty  \\
-\infty & \frac{\alpha+1}{2}+\beta \\
\end{pmatrix}, & \text{if $\alpha$ is odd}.
\end{cases}
\end{equation}
It is clear that given any matrix of the form $B^\alpha A^\beta$ for $\alpha, \beta \geq 0$ or $B^\alpha A^\beta B$ for $\alpha \geq 0$, $\beta \geq 1$, we can determine $\alpha$ and $\beta$ uniquely, as well as determine whether we are in the first or the latter case. We give an example demonstrating this. Let $X = B^\alpha A^\beta B^\gamma$ for some arbitrarily chosen $(\alpha, \beta, \gamma) \in \mathbb{N} \times \mathbb{N} \times \{ 0 , 1\}$, such that $\beta=0$ only if $\gamma=0$, say 
\[
X = \begin{pmatrix}
-4 & -\infty  \\
-\infty & 12 \\
\end{pmatrix}.
\]
To determine which triple $(\alpha, \beta, \gamma)$ the author chose to produce this matrix $X$, we first note that by \eqref{Eq:BalphaAbeta} and \eqref{Eq:BalphaAbetaBBBB} we must, by observing the locations of the $-\infty$-symbols, have that either $\alpha$ is even and $\gamma=0$, or else $\alpha$ is odd and $\gamma=1$. In the case $\gamma=0$, we would have $X_{1,1} > X_{2,2}$; in the case $\gamma=1$, we would have $X_{1,1} < X_{2,2}$ (the case $X_{1,1}=X_{2,2}$ happens if and only if $\beta=0$, which is incompatible with $\gamma=1$). As $-4 < 12$, we have $\gamma=1$, so $\alpha$ is odd. We hence have the system 
\begin{equation*}
\begin{cases} \frac{\alpha+1}{2} -\beta =  -4 \\ \frac{\alpha+1}{2} +\beta = 12 
\end{cases}
\end{equation*}
which has the unique solution $\alpha = 7, \beta = 8$. Thus $X$ uniquely determines the triple $(\alpha, \beta, \gamma) = (7, 8, 1)$, and one can check directly that $B^7 A^8 B = X$. We conclude, by similar reasoning in the other cases, that $\psi$ is injective, completing the proof. 
\end{proof}

\begin{lemma}\label{Lem:M2-in-M3}
There exists a (monoid) embedding $M_2 \leq M_3$.  
\end{lemma}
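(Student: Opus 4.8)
The plan is to write down an explicit candidate embedding and then prove injectivity by squeezing both $M_2$ and $M_3$ through a common quotient, namely the infinite dihedral group $D_\infty = \pres{Gp}{a,b}{a^2 = b^2 = 1}$. First I would define $\psi \colon M_2 \to M_3$ by $a \mapsto ab$ and $b \mapsto b$. This is a well-defined homomorphism, since in $M_3$ we have $(ab)^2 = abab = (aba)b = b\cdot b = b^2$, which matches the defining relation $a^2 = b^2$ of $M_2$. Thus the entire content of the lemma is the injectivity of $\psi$, and everything below is directed at that.

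The key structural inputs are the following. On the $M_3$ side, the multiplication rule of Lemma~\ref{Lem:aba=b_normal_form} (the case where the middle exponent is even) gives $a\cdot b^2 = b^2 a$, so $z' := b^2$ is \emph{central} in $M_3$; moreover $M_3$ is cancellative (Remark following Theorem~\ref{Thm:Shneerson's}) and $z'$ has infinite order, its powers $b^{2k}$ being pairwise distinct normal forms. Imposing $b^2 = 1$ makes both $b$ and $ab$ involutions, and turns $aba = b$ into $(ab)^2 = 1$; a short check shows the presentations $\pres{}{a,b}{aba=b,\ b^2=1}$ and $\pres{}{a,b}{b^2=1,\ (ab)^2=1}$ coincide, so $M_3/(b^2=1) \cong D_\infty$, generated by the two involutions $b$ and $ab$. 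On the $M_2$ side, $z := a^2 = b^2$ is central, $M_2/(z=1) \cong D_\infty$, and every element admits a normal form $z^k w$ with $w$ a reduced alternating word (no $aa$, no $bb$): existence follows by pulling out copies of the central $z$ and deleting $aa$ and $bb$, while uniqueness follows by combining the quotient map to $D_\infty$ (reduced alternating words are exactly the normal forms of $D_\infty$) with the length homomorphism $\lambda \colon M_2 \to (\mathbb{N},+)$, $\lambda(a)=\lambda(b)=1$, which reads off $k$.

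The injectivity argument then runs as follows. The map $\psi$ descends to a homomorphism $\bar\psi \colon D_\infty \to D_\infty$ sending the two defining involutions $a, b$ of the source to the two involutions $ab, b$ of the target, hence an isomorphism. Now suppose $\psi(m_1) = \psi(m_2)$ with $m_i = z^{k_i} w_i$ in normal form. Projecting to $D_\infty$ and using that $\bar\psi$ is injective forces the reduced alternating parts to agree, $w_1 \equiv w_2 =: w$. Since $\psi(z) = (ab)^2 = b^2 = z'$, we get $z'^{k_1}\psi(w) = z'^{k_2}\psi(w)$ in $M_3$; cancelling $\psi(w)$ and invoking the infinite order of the central $z'$ yields $k_1 = k_2$, whence $m_1 = m_2$. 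I expect the main obstacle to be not any single hard step but the correct bookkeeping in the two quotient computations --- verifying $M_3/(b^2=1) \cong D_\infty$ and pinning down the $z^k w$ normal form of $M_2$; the linchpin making everything fit is the centrality of $b^2$ in $M_3$, which is exactly what lets the central $\mathbb{N}$-extension structures of $M_2$ and of $\psi(M_2)$ be matched.
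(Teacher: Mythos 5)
Your proof is correct, but it takes a genuinely different route from the paper's. The paper's argument is group-theoretic and very short: since $M_2$ and $M_3$ are cycle-free, Adian's theorem embeds them into their universal groups $G_2 = \pres{Gp}{a,b}{a^2=b^2}$ and $G_3 = \pres{Gp}{a,b}{aba=b}$; the free-group automorphism $a \mapsto ba$, $b \mapsto b$ induces an isomorphism $G_2 \cong G_3$ carrying $M_2$ onto $\langle ba, b\rangle_{M_3}$, and that is the whole proof. You instead work entirely inside the monoids: you exhibit the homomorphism $a \mapsto ab$, $b \mapsto b$ (a mirror image of the paper's choice, equally valid since $(ab)^2 = (aba)b = b^2$), and prove injectivity by combining the $z^k w$ normal form of $M_2$ (centrality of $z = a^2 = b^2$ plus the length homomorphism), the common quotient $D_\infty$ on which the map induces an isomorphism of the two pairs of defining involutions, and then cancellativity of $M_3$ together with the infinite order of the central element $b^2$ to recover the exponent $k$. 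All of these ingredients are available in the paper (the multiplication rule of Lemma~\ref{Lem:aba=b_normal_form} gives $ab^2 = b^2a$, and the remark after Theorem~\ref{Thm:Shneerson's} gives cancellativity), so the argument goes through. What the paper's approach buys is brevity, at the cost of invoking the embeddability of cycle-free monoids into their groups of fractions; what yours buys is a self-contained, elementary verification that also makes the central-extension structure of both monoids over $D_\infty$ explicit, which is of some independent interest given that $G_2 \cong G_3$ is the Klein bottle group $\mathcal{K}$ appearing elsewhere in the paper.
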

\begin{proof}
Let $G_2 = \pres{Gp}{a,b}{a^2 = b^2}$ and $G_3 = \pres{Gp}{a,b}{aba = b}$. Then $M_2 \leq G_2$ and $M_3 \leq G_3$ via the identity maps, as both $M_2, M_3$ are cycle-free. Note that $G_2 \cong G_3$ (though $M_2 \not\cong M_3$) via the isomorphism $\psi$ induced by the free group automorphism $a \mapsto ba$ and $b \mapsto b$. Indeed, $\psi(b^{-2}a^2) = b^{-2}baba = b^{-1}ab$. Now $\psi$ restricts to give an isomorphism $M_2 \cong \langle \psi(a), \psi(b) \rangle_{M_3} = \langle ba, b \rangle_{M_3}$, as desired. 
\end{proof}

Now, from Lemma~\ref{Lem:aba=b-is_tropical_real} and Lemma~\ref{Lem:M2-in-M3} (and the hereditary property of $\MT$-tropicality), we conclude that $M_2$ is also $\operatorname{M}$-tropical; indeed, we find the following explicit embedding by using the fact that $\langle ba,b \rangle_{M_3} \cong M_2$.

\begin{lemma}\label{Lem:a2=b2-is_tropical_real}
The monoid $M_2 = \pres{Mon}{a,b}{a^2=b^2}$ is isomorphic to the submonoid of $\MTZ{2}$ generated by the two matrices
\[
A = \begin{pmatrix}
-\infty & 0 \\
1 & -\infty \\
\end{pmatrix} \qquad \textnormal{and} \qquad B = \begin{pmatrix}
-\infty & 1 \\
0 & -\infty \\
\end{pmatrix}.
\]
\end{lemma}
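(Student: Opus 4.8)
The plan is to avoid building a fresh normal form for $M_2$ and instead to transport the explicit embedding of $M_3$ through the algebraic embedding $M_2 \cong \langle ba, b \rangle_{M_3}$ supplied by Lemma~\ref{Lem:M2-in-M3}. Write $\varphi \colon M_3 \hookrightarrow \MTZ{2}$ for the injection of Lemma~\ref{Lem:aba=b-is_tropical_real}, with $\varphi(a)$ and $\varphi(b)$ the two generating matrices displayed there. Since $\MT$-tropicality is hereditary, the submonoid $\langle ba, b \rangle_{M_3} \leq M_3$ embeds into $\MTZ{2}$ via the restriction of $\varphi$; composing this restriction with the isomorphism $M_2 \xrightarrow{\sim} \langle ba, b \rangle_{M_3}$ of Lemma~\ref{Lem:M2-in-M3} produces the sought embedding of $M_2$. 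All that remains is to compute the images of the two generators of $M_2$ under this composite.

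The single computation driving the proof is the identification of these images. The isomorphism of Lemma~\ref{Lem:M2-in-M3} sends the generator $a$ of $M_2$ to $ba$ and the generator $b$ to $b$; hence under the composite $a \mapsto \varphi(ba) = \varphi(b) \otimes \varphi(a)$ and $b \mapsto \varphi(b)$. The first step is therefore to multiply the two matrices of Lemma~\ref{Lem:aba=b-is_tropical_real} tropically in the order $\varphi(b) \otimes \varphi(a)$; a direct entrywise calculation gives precisely the matrix $A$ in the statement, while $\varphi(b) = B$. This pins down the generating pair exactly as claimed, and no further matrices need to be analysed.

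With the generators identified, one records that the map $\psi \colon M_2 \to \MTZ{2}$ with $a \mapsto A$, $b \mapsto B$ is a well-defined injective homomorphism. Well-definedness is a one-line check that $A^2 = B^2$ (both equal the matrix with $1$ on the diagonal and $-\infty$ off it), so the defining relation of $M_2$ is respected. Injectivity comes for free, since $\psi$ factors as $M_2 \xrightarrow{\sim} \langle ba, b \rangle_{M_3} \hookrightarrow \MTZ{2}$, a composite of an isomorphism with the restriction of the already-established injection $\varphi$. I do not expect any genuine obstacle here: the whole content is the fortunate fact that $\varphi(ba)$ simplifies to the clean matrix $A$, turning the abstract inclusion $M_2 \leq M_3$ into an explicit pair of $2 \times 2$ tropical matrices. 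The only point requiring care is the direction of the isomorphism in Lemma~\ref{Lem:M2-in-M3}, namely that $a \mapsto ba$ and not $ab$, so that the tropical product is formed in the correct order $\varphi(b) \otimes \varphi(a)$.
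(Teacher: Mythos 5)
Your proposal is correct and is essentially the paper's own argument: the paper likewise derives Lemma~\ref{Lem:a2=b2-is_tropical_real} by pushing the embedding $\varphi$ of Lemma~\ref{Lem:aba=b-is_tropical_real} through the isomorphism $M_2 \cong \langle ba, b\rangle_{M_3}$ of Lemma~\ref{Lem:M2-in-M3}, and your computation $\varphi(b)\otimes\varphi(a)=A$, $\varphi(b)=B$ (with $A^2=B^2$ the tropical diagonal matrix with entries $1$) checks out. The only difference is that you spell out the factorisation-through-$M_3$ injectivity argument explicitly, which the paper leaves implicit.
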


This completes the proof that $M_2$ and $M_3$ are $\MT$-tropical.

\
\begin{center}
\textit{The Klein bottle group $\mathcal{K}$ is $\MT$-tropical}
\end{center}

Shitov \cite{Shitov2012} has proved that every subgroup of a $\MTZ{n}$ is virtually abelian. We will use a converse to this, due to Shitov, to prove that the virtually abelian group $\mathcal{K}$ embeds in $\MTZ{2}$, although extracting two matrices generating $\mathcal{K}$ from the method seems somewhat intricate. Hence, we also give a direct proof via an explicit generating set. 

\begin{lemma}\label{Lem:Klein-bottle-embeds-in-U}
The fundamental group of the Klein bottle $\mathcal{K}= \pres{Mon}{a,b}{abba=1}$ is isomorphic to the subsemigroup of $\MTZ{2}$ generated by the two matrices
\[
A = \begin{pmatrix}
-\infty & -1 \\
0 & -\infty 
\end{pmatrix} \qquad \textnormal{and} \qquad B = 
\begin{pmatrix}
-\infty & 1 \\
0 & -\infty 
\end{pmatrix}.
\]
\end{lemma}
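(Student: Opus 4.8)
The plan is to verify that the assignment $a \mapsto A$, $b \mapsto B$ extends to a well-defined monoid homomorphism $\psi \colon \mathcal{K} \to \MTZ{2}$, and then to prove that $\psi$ is injective by exploiting the semidirect-product structure of $\mathcal{K}$. First I would compute the two products $AB$ and $BA$. A direct calculation gives $AB = \left(\begin{smallmatrix} -1 & -\infty \\ -\infty & 1\end{smallmatrix}\right)$ and $BA = \left(\begin{smallmatrix} 1 & -\infty \\ -\infty & -1\end{smallmatrix}\right)$, both diagonal, and their product is $\mathbf{I}_2$. Hence $ABBA = (AB)(BA) = \mathbf{I}_2$, which is exactly the image of the defining relation $abba = 1$, so $\psi$ is a well-defined homomorphism. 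Since $\mathcal{K}$ is a group generated as a monoid by $a, b$ (one checks $a^{-1} = bba$ and $b^{-1} = aab$ are positive words), the subsemigroup $\langle A, B\rangle$ coincides with $\psi(\mathcal{K})$; it therefore suffices to show $\psi$ is injective.

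For injectivity I would set $c = ab$ and use the normal form afforded by the structure $\mathcal{K} \cong \mathbb{Z} \rtimes \mathbb{Z}$ established in the preliminaries. From $abba = ab^2a = 1$ one gets $b^2 = a^{-2}$, whence $a^2 b = b^{-1}$ and so
\[
a\,c\,a^{-1} = a^2 b a^{-1} = b^{-1}a^{-1} = (ab)^{-1} = c^{-1}.
\]
Thus $\langle c\rangle$ is normalised by $a$ with $a$ acting by inversion, and $\mathcal{K} = \langle c\rangle \rtimes \langle a\rangle$, so every element of $\mathcal{K}$ has a unique normal form $c^m a^n$ with $m, n \in \mathbb{Z}$. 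It then remains to show that the map $(m,n) \mapsto \psi(c^m a^n) = (AB)^m A^n$ is injective.

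The key computation is that $AB = \operatorname{diag}(-1,1)$ gives $(AB)^m = \operatorname{diag}(-m,m)$ for all $m \in \mathbb{Z}$, and that $A^2 = \operatorname{diag}(-1,-1)$ is central; consequently $A^n$ is diagonal when $n$ is even and anti-diagonal when $n$ is odd. Reading off the parity of $n$ from whether $\psi(c^m a^n)$ is diagonal or anti-diagonal, I would then recover $(m,n)$ from the two finite entries: in the even case $n = 2k$ the matrix is $\operatorname{diag}(-m-k,\,m-k)$, and in the odd case $n=2k+1$ it is anti-diagonal with entries $(-m-k-1,\,m-k)$, each of which yields an invertible linear system in $m,k$. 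This shows distinct normal forms have distinct images, so $\psi$ is injective and hence an isomorphism of $\mathcal{K}$ onto $\langle A, B\rangle$.

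The main obstacle is bookkeeping rather than conceptual: I must establish the normal form $c^m a^n$ \emph{independently} of the matrix representation (to avoid circularity, arguing purely group-theoretically from $b^2 = a^{-2}$ and $aca^{-1}=c^{-1}$ together with the known $\mathbb{Z}\rtimes\mathbb{Z}$ structure), and then carefully confirm that the coordinate-recovery is genuinely injective across the even/odd split, including for negative $m$ and $n$. Once these verifications are in place the proof is complete, and it also explains the appearance of $\mathcal{K}$ in Table~\ref{Table: Group properties} with $\operatorname{M}$-rank $2$.
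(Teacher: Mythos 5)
Your proof is correct, and it takes a genuinely different --- and more self-contained --- route than the paper's. The paper first invokes Shitov's theorem (a group embeds in $\MTZ{n}$ if and only if it embeds in $\mathbb{Z} \wr S_n$) for abstract existence, and then, for the explicit matrices $A,B$, appeals to Jantzen's normal form $b^i(ab)^ja^k$ (which is unique only modulo shifting $i,k$ by a common integer) and explicitly omits the ``lengthy but straightforward'' verification that the corresponding matrix products are distinct. You instead use the group normal form $c^m a^n$ with $c = ab$ and $m,n \in \mathbb{Z}$, which is genuinely unique, and the matrix side collapses because $AB = \operatorname{diag}(-1,1)$ and $A^2 = \operatorname{diag}(-1,-1)$ are diagonal, so injectivity reduces to a two-case $2\times 2$ linear system. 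All your computations check out: $ABBA = \mathbf{I}_2$, the even/odd formulas $(AB)^mA^{2k} = \operatorname{diag}(-m-k,\,m-k)$ and $(AB)^mA^{2k+1}$ anti-diagonal with entries $(-m-k-1,\,m-k)$, and the recovery of $(m,n)$ from the position and values of the finite entries. The one step you rightly flag as needing care --- that $c^ma^n$ really is a normal form --- is handled most cleanly by a Tietze transformation: substituting $b = a^{-1}c$ turns $ab^2a=1$ into $c\,(a^{-1}ca)=1$, i.e.\ $aca^{-1}=c^{-1}$, so $\mathcal{K} \cong \pres{Gp}{a,c}{aca^{-1}=c^{-1}}$ is literally the standard $\mathbb{Z}\rtimes\mathbb{Z}$ presentation with unique normal forms $c^ma^n$. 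With that spelled out your argument is complete; what the paper's approach buys instead is generality (Shitov's criterion characterises exactly which groups are $\operatorname{M}$-tropical), at the cost of leaving the concrete verification to the reader.
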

\begin{proof}
It follows from Shitov \cite{Shitov2012} that a group $G$ admits a faithful representation in $\MTZ{n}$ if and only if $G$ is isomorphic to a subgroup of $\mathbb{Z} \wr S_n$, where $S_n$ is the symmetric group. In the group 
\[
\mathbb{Z} \wr S_2 \cong \pres{Gp}{a,b}{a^2 = [a, bab^{-1}] = 1}
\]
it is not difficult to see, by using the Reidemeister--Schreier method, that the subgroup $H = \langle ab, ab^{-1}\rangle \leq \mathbb{Z} \wr S_2$ is a normal subgroup of index $2$ and that $H \cong \mathcal{K}$. This proves the claim. 

To prove that  $A$ and $B$ generate $\mathcal{K}$, first note that $ABBA = 1$. Jantzen \cite{Jantzen1985} gave the following normal form: every element of $\mathcal{K}$ can be written as $w = b^i(ab)^ja^k$ for some $i, j, k \in \mathbb{N}$, and the triple $(i,j,k)$ is uniquely determined by $w$ modulo replacing $i, k$ by $i+2p, k+2p$ (for some $p \in \mathbb{Z}$), see \cite[Lemma~6 \& Corollary~7]{Jantzen1985}. A lengthy, but straightforward, calculation (herein omitted) shows that any product $B^i(AB)^jA^k$ is unique up to the same condition, proving the result. 
\end{proof}

This completes the proof of Theorem~\ref{Thm:Main_MT}. \qed

\subsection{The open cases ($M_4$ and $M_7$)}\label{Subsec:Open_m4_m7}

We now turn briefly towards the two one-relation monoids which have, as of yet, eluded the author's attempts prove whether or not they are $\operatorname{M}$-tropical. First, note that as $M_4$ and $M_7$ are anti-isomorphic, one is $\MT$-tropical if and only if the other is. We certainly believe that neither is $\MT$-tropical, but the author's attempted proofs of this have quickly run into the technicalities of tropical mathematics. We therefore pose the following conjecture:

\begin{conjecture}\label{Conj:ABAA=BA-is-not-tropical}
The monoid $M_4 = \pres{Mon}{a,b}{aba^2 = ba}$ does not admit a faithful tropical representation into $\MTZ{n}$ for any $n \geq 1$ (hence, neither does $M_7$). 
\end{conjecture}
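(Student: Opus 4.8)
Since this is an open conjecture rather than a settled result, I can only propose a line of attack and flag where it is likely to stall. Suppose, for contradiction, that there is a faithful representation $\phi\colon M_4 \to \MTZ{n}$ for some $n \geq 1$, and write $A = \phi(a)$, $B = \phi(b)$, and $C = \phi(ba) = BA$. The plan is to play two features of $M_4$ against each other. On the one hand, by Lemma~\ref{Lem:M3-in-M4-and-M7} the submonoid $\langle a, ba\rangle$ is a faithful copy of $M_3 = \pres{Mon}{a,c}{aca=c}$, so $\langle A, C\rangle$ is a faithful tropical representation of $M_3$, about which we know a great deal (Lemma~\ref{Lem:aba=b_normal_form}, Lemma~\ref{Lem:aba=b-is_tropical_real}); in particular, iterating $aca = c$ yields the infinite family of relations $A^k C A^k = C$ for all $k \geq 0$. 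On the other hand, the defining relation $aba^2 = ba$ reads $(AC)A = BA$ with $AC = \phi(aba) \neq \phi(b) = B$ (since $aba \neq b$ in $M_4$), so right-multiplication by $A$ identifies the two \emph{distinct} matrices $AC$ and $B$. Thus $A$ is not right-cancellable in the image, even though $M_4$ is left cancellative (so $A$ \emph{is} left-cancellable) and $a$ is two-sided cancellable \emph{within} $\phi(M_3)$, as $M_3$ is cancellative.

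The strategy is to show these demands are incompatible. Non-right-cancellability forces $A$ to be tropically singular --- not a monomial (permutation-times-diagonal) matrix --- while the balanced conjugation relations $A^kCA^k = C$ pin down the max-plus asymptotics of the powers $A^k$. Concretely, I would analyse $A^k$ through its max-plus spectral structure: on each strongly connected component of the weighted digraph of $A$ the entries of $A^k$ grow linearly with slope the maximal cycle mean of that component, and $A^kCA^k = C$ for all $k$ forces the contributions routed through distinct components to cancel exactly, constraining those cycle means and the supports of $C$. Having pinned down the structure of $A$, I would then bring in $B$ via $BA = (AC)A$, exploiting that $b$ behaves freely away from the single relation: for instance $\phi(aba)A = BA$ but $\phi(aba)B = \phi(abab) \neq \phi(bb) = B^2$ (as $abab \neq bb$ in $M_4$), so whatever identification $A$ performs on the right cannot be reproduced multiplicatively. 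This rigidity should be shown to collapse either the faithfulness of $\phi|_{M_3}$ or the inequality $aba \neq b$.

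I would first calibrate the spectral bookkeeping on small $n$, then seek a dimension-free argument. It is worth emphasising that a purely identity-theoretic attack cannot by itself settle the conjecture: an embedding $M_4 \hookrightarrow \MTZ{n}$ only requires the containment $\mathrm{Id}(\MTZ{n}) \subseteq \mathrm{Id}(M_4)$, and since $\MTZ{n}$ embeds in $\MTZ{n+1}$ this family of identity sets is decreasing in $n$, so the containment only becomes \emph{easier} to satisfy as $n$ grows. (This is in sharp contrast with the $\UT$-case, where Corollary~\ref{Cor:Canc.-subsems-of-UTZ-are-comm} supplies a uniform obstruction for every $n$.) A more structural alternative would be to prove a normal-form-driven classification of faithful tropical representations of the cancellative monoid $M_3$ --- showing that, up to the usual equivalences, $A$ must act with a rigid two-block structure as in the representation $A = \bigl(\begin{smallmatrix} 1 & -\infty \\ -\infty & -1 \end{smallmatrix}\bigr)$ of Lemma~\ref{Lem:aba=b-is_tropical_real} --- and then arguing that no matrix $B$ extends such a representation to one of $M_4$.

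The hard part is exactly the reducible case. The mechanism that makes $M_3$ tropically representable at all is that a \emph{reducible} matrix such as the diagonal $A$ above can have its two diagonal entries grow with opposite slopes, so that the off-diagonal entries of $C$ acquire cancelling contributions under $A^kCA^k = C$ while $A$ remains non-cancellable. For general $n$ one must control tropical matrices whose digraphs split into many strongly connected components growing at different linear rates, and rule out every such multi-rate configuration uniformly --- all the while taking care not to exclude the genuine representation of $M_3 \leq M_4$. It is in this reducible, multi-component analysis that the technicalities concentrate, and this is presumably where the author's attempts stalled.
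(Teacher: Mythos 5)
This statement is posed in the paper as an open conjecture, not a theorem: the author offers only informal evidence (numerical experimentation, and the observation that the iterated relation $A^k(BA)A^k = BA$ is very restrictive for matrices $A, B$ of infinite order), so there is no proof in the paper to compare your attempt against. You correctly recognise this and do not claim a proof; your preliminary observations --- the faithful copy of $M_3 \cong \langle a, ba\rangle_{M_4}$ inside $M_4$ giving $A^k C A^k = C$ for all $k$, the failure of right-cancellability of $A$ forced by $(AC)A = BA$ with $AC \neq B$, and the inadequacy of purely identity-theoretic obstructions since the sets $\mathrm{Id}(\MTZ{n})$ decrease in $n$ while $M_4$ does satisfy a non-trivial identity --- are all correct and consistent with, indeed somewhat sharper than, the paper's own heuristic remarks. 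Your proposed max-plus spectral analysis of the powers $A^k$ in the reducible case is a plausible programme, and your diagnosis of where it will stall (controlling many strongly connected components with distinct cycle means without excluding the genuine representation of $M_3$) matches the paper's admission that its attempts "ran into the technicalities of tropical mathematics"; but as you acknowledge, nothing here constitutes a proof, and the conjecture remains open.
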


By testing a large number of tropical matrices, it seems that the class of matrices $A, B \in \MTZ{n}$ satisfying $A(BA)A = BA$ is very restrictive, especially if one further assumes easy conditions derived from the structure of $M_4$ (e.g. $A, B$ both have infinite order as $M_4$ has no non-trivial idempotents, $AB \neq BA$ as $ab \neq ba$ in $M_4$, etc.). Note further that $A^k (BA) A^k = A^i (BA) A^i$ for all $0 \leq i \leq k$, which seems to be a lot to demand of matrices $A$ and $B$ both of infinite order. This style of reasoning, though not formalised, is what leads us to Conjecture~\ref{Conj:ABAA=BA-is-not-tropical}. Either way, if proved or disproved, a resolution of Conjecture~\ref{Conj:ABAA=BA-is-not-tropical} would complete the classification of the $\operatorname{M}$-tropical matrices.

\section*{Acknowledgements}

\noindent I wish to express my sincere thanks to L. M. Shneerson (Hunter College, CUNY), first for his encouragement and interest shown in my work, and second for all his remarkable work in combinatorial semigroup theory, particularly \cite{Shneerson1972a, Shneerson1972b}, copies of which he kindly sent to me, and without which this article could not have been written. I also wish to thank T. Aird (Manchester) for sharing his expertise on tropical mathematics on numerous occasions, and M. Johnson \& M. Kambites (Manchester) for helpful discussions and for introducing me to the tropical world.

{
\bibliography{tropicalORM.bib} 

\newcommand{\etalchar}[1]{$^{#1}$}
\providecommand{\bysame}{\leavevmode\hbox to3em{\hrulefill}\thinspace}
\providecommand{\MR}{\relax\ifhmode\unskip\space\fi MR }
\providecommand{\MRhref}[2]{%
  \href{http://www.ams.org/mathscinet-getitem?mr=#1}{#2}
}
\providecommand{\href}[2]{#2}
\begin{thebibliography}{CKK{\etalchar{+}}17}

\bibitem[Adi62]{Adian1962}
S.~I. Adian, \emph{Identities in special semigroups}, Dokl. Akad. Nauk SSSR
  \textbf{143} (1962), 499--502. \MR{0146284}

\bibitem[Adi66]{Adian1966}
\bysame, \emph{Defining relations and algorithmic problems for groups and
  semigroups}, {Tr}. {Mat}. {Inst}. {Steklov} 85, 123 pp. (1966) (Russian).

\bibitem[Adi76]{Adian1976}
\bysame, \emph{Word transformations in a semigroup that is given by a system of
  defining relations}, Algebra i Logika \textbf{15} (1976), no.~6, 611--621,
  743.

\bibitem[Air22]{Aird2022}
Thomas Aird, \emph{Identities of tropical matrix semigroups and the plactic
  monoid of rank 4}, Internat. J. Algebra Comput. \textbf{32} (2022), no.~6,
  1083--1100. \MR{4473174}

\bibitem[Bau71]{Baumslag1971}
Gilbert Baumslag, \emph{Positive one-relator groups}, Trans. Amer. Math. Soc.
  \textbf{156} (1971), 165--183. \MR{274562}

\bibitem[CHLS16]{Chen2016}
Yuzhu Chen, Xun Hu, Yanfeng Luo, and Olga Sapir, \emph{The finite basis problem
  for the monoid of two-by-two upper triangular tropical matrices}, Bull. Aust.
  Math. Soc. \textbf{94} (2016), no.~1, 54--64. \MR{3539322}

\bibitem[CKK{\etalchar{+}}17]{Cain2017}
Alan~J. Cain, Georg Klein, Lukasz Kubat, Ant{\'o}nio Malheiro, and Jan
  Okni'nski, \emph{A note on identities in plactic monoids and monoids of
  upper-triangular tropical matrices}, arXiv:1705.04596 (2017).

\bibitem[DJK18]{Daviaud2018}
Laure Daviaud, Marianne Johnson, and Mark Kambites, \emph{Identities in upper
  triangular tropical matrix semigroups and the bicyclic monoid}, J. Algebra
  \textbf{501} (2018), 503--525. \MR{3768147}

\bibitem[Gra20]{Gray2020}
Robert~D. Gray, \emph{Undecidability of the word problem for one-relator
  inverse monoids via right-angled {A}rtin subgroups of one-relator groups},
  Invent. Math. \textbf{219} (2020), no.~3, 987--1008. \MR{4055182}

\bibitem[HK12]{Hollings2012}
Christopher Hollings and Mark Kambites, \emph{Tropical matrix duality and
  {G}reen's {$\mathscr{D}$} relation}, J. Lond. Math. Soc. (2) \textbf{86}
  (2012), no.~2, 520--538. \MR{2980923}

\bibitem[HZL21]{Han2021}
Bin~Bin Han, Wen~Ting Zhang, and Yan~Feng Luo, \emph{Equational theories of
  upper triangular tropical matrix semigroups}, Algebra Universalis \textbf{82}
  (2021), no.~3, Paper No. 44, 21. \MR{4274619}

\bibitem[IM10]{Izhakian2010}
Zur Izhakian and Stuart~W. Margolis, \emph{Semigroup identities in the monoid
  of two-by-two tropical matrices}, Semigroup Forum \textbf{80} (2010), no.~2,
  191--218. \MR{2601760}

\bibitem[Izh14]{Izhakian2014}
Zur Izhakian, \emph{Semigroup identities in the monoid of triangular tropical
  matrices}, Semigroup Forum \textbf{88} (2014), no.~1, 145--161. \MR{3164156}

\bibitem[Izh16]{Izhakian2016}
\bysame, \emph{Erratum to: {S}emigroup identities in the monoid of triangular
  tropical matrices}, Semigroup Forum \textbf{92} (2016), no.~3, 733.
  \MR{3500265}

\bibitem[Izh19]{Izhakian2019}
\bysame, \emph{Tropical plactic algebra, the cloaktic monoid, and semigroup
  representations}, J. Algebra \textbf{524} (2019), 290--366. \MR{3908267}

\bibitem[Jan85]{Jantzen1985}
M.~Jantzen, \emph{A note on a special one-rule semi-{T}hue system}, Inform.
  Process. Lett. \textbf{21} (1985), no.~3, 135--140. \MR{812159}

\bibitem[JF19]{Johnson2019b}
Marianne Johnson and Peter Fenner, \emph{Identities in unitriangular and gossip
  monoids}, Semigroup Forum \textbf{98} (2019), no.~2, 338--354. \MR{3917968}

\bibitem[JK21]{Johnson2021}
Marianne Johnson and Mark Kambites, \emph{Tropical representations and
  identities of plactic monoids}, Trans. Amer. Math. Soc. \textbf{374} (2021),
  no.~6, 4423--4447. \MR{4251234}

\bibitem[JT19]{Johnson2019}
Marianne Johnson and Ngoc~Mai Tran, \emph{Geometry and algorithms for upper
  triangular tropical matrix identities}, J. Algebra \textbf{530} (2019),
  470--507. \MR{3957959}

\bibitem[Kam22]{Kambites2022}
Mark Kambites, \emph{Free objects in triangular matrix varieties and quiver
  algebras over semirings}, J. Algebra \textbf{590} (2022), 439--462.
  \MR{4332036}

\bibitem[Mag30]{Magnus1930}
Wilhelm Magnus, \emph{\"{U}ber diskontinuierliche {G}ruppen mit einer
  definierenden {R}elation. ({D}er {F}reiheitssatz)}, J. Reine Angew. Math.
  \textbf{163} (1930), 141--165. \MR{1581238}

\bibitem[Mar51a]{Markov1951b}
A.~A. Markov, \emph{Impossibility of algorithms for recognising some properties
  of associative systems}, Dokl. Akad. Nauk SSSR \textbf{77} (1951), 953--956.

\bibitem[Mar51b]{Markov1951a}
\bysame, \emph{Impossibility of some algorithms in the theory of associative
  systems}, Dokl. Akad. Nauk SSSR \textbf{77} (1951), 19--20.

\bibitem[NB21]{NybergBrodda2021a}
C.-F. Nyberg-Brodda, \emph{The word problem for one-relation monoids: a
  survey}, Semigroup Forum \textbf{103} (2021), no.~2, 297--355. \MR{4305500}

\bibitem[NB22a]{NybergBrodda2022b}
\bysame, \emph{The {A}dian-{R}abin {T}heorem -- an {E}nglish translation},
  Pre-print (2022), Available online at arXiv:2208.08560.

\bibitem[NB22b]{NybergBrodda2022a}
\bysame, \emph{On the word problem for special monoids}, Semigroup Forum
  \textbf{105} (2022), no.~1, 295--327.

\bibitem[Okn15]{Okninski2015}
Jan Okni\'{n}ski, \emph{Identities of the semigroup of upper triangular
  tropical matrices}, Comm. Algebra \textbf{43} (2015), no.~10, 4422--4426.
  \MR{3366585}

\bibitem[Pas06]{Pastijn2006}
F.~Pastijn, \emph{Polyhedral convex cones and the equational theory of the
  bicyclic semigroup}, J. Aust. Math. Soc. \textbf{81} (2006), no.~1, 63--96.
  \MR{2254679}

\bibitem[PS84]{Perrin1984}
Dominique Perrin and Paul Schupp, \emph{Sur les mono\"{\i}des \`a un relateur
  qui sont des groupes}, Theoret. Comput. Sci. \textbf{33} (1984), no.~2-3,
  331--334. \MR{767398}

\bibitem[Sch71]{Scheiblich1971}
H.~E. Scheiblich, \emph{A characterization of a free elementary inverse
  semigroup}, Semigroup Forum \textbf{2} (1971), no.~1, 76--79. \MR{283116}

\bibitem[Shi12]{Shitov2012}
Yaroslav Shitov, \emph{Tropical matrices and group representations}, J. Algebra
  \textbf{370} (2012), 1--4. \MR{2966823}

\bibitem[Shn72a]{Shneerson1972a}
L.~M. Shneerson, \emph{Identities in semigroups with one defining relation},
  Logic, Algebra, and Computational Mathematics, Ivanovo Pedagogical Institute
  \textbf{1} (1972), no.~1--2, 139--156.

\bibitem[Shn72b]{Shneerson1972b}
\bysame, \emph{Identities in semigroups with one defining relation. {II}},
  Logic, Algebra, and Computational Mathematics, Ivanovo Pedagogical Institute
  \textbf{1} (1972), no.~3--4, 112--124.

\bibitem[Shn89]{Shneerson1989}
\bysame, \emph{On the axiomatic rank of varieties generated by a semigroup or
  monoid with one defining relation}, Semigroup Forum \textbf{39} (1989),
  no.~1, 17--38. \MR{987907}

\bibitem[Shn93]{Shneerson1993}
\bysame, \emph{Identities in finitely generated semigroups of polynomial
  growth}, J. Algebra \textbf{154} (1993), no.~1, 67--85. \MR{1201912}

\bibitem[Shn15]{Shneerson2015}
\bysame, \emph{On growth, identities and free subsemigroups for inverse
  semigroups of deficiency one}, Internat. J. Algebra Comput. \textbf{25}
  (2015), no.~1-2, 233--258. \MR{3325882}

\bibitem[SS09]{Speyer2009}
David Speyer and Bernd Sturmfels, \emph{Tropical mathematics}, Math. Mag.
  \textbf{82} (2009), no.~3, 163--173. \MR{2522909}

\bibitem[SV85]{Shevrin1985}
L.~N. Shevrin and M.~V. Volkov, \emph{Identities of semigroups}, Izv. Vyssh.
  Uchebn. Zaved. Mat. (1985), no.~11, 3--47, 85. \MR{829099}

\bibitem[Tay17]{Taylor2017}
Matthew Taylor, \emph{On upper triangular tropical matrix semigroups, tropical
  matrix identities, and $\mathbb{T}$-modules}, Ph.D. thesis, The University of
  Manchester (UK), 2017.

\end{thebibliography}
\bibliographystyle{amsalpha}
}
\end{document}